\documentclass[11pt]{amsart}
\usepackage{amsmath,amsthm,amsfonts,amscd,amssymb,enumerate,color,hyperref,latexsym,mathtools,bm}
\usepackage[utf8]{inputenc}
\usepackage[T1,T2A]{fontenc} 
\usepackage[UKenglish]{babel}
\usepackage{graphicx} 
\usepackage{enumitem}
\usepackage{comment}
\usepackage[margin=1in]{geometry}
\usepackage{parskip}
\usepackage{microtype}
\usepackage{tikz-cd}
\hypersetup{colorlinks,allcolors=blue}
\usepackage{url}
\input{xypic}
\usepackage{lineno}
\usepackage{rotating}
\usepackage{multirow}
\usepackage{multicol}
\usepackage{epstopdf}
\usepackage{etoolbox}
\newcounter{dummy}
\numberwithin{equation}{section}
\numberwithin{table}{section}
\usepackage{comment}
\usepackage{doi}
\usepackage{color,soul}
\usepackage{booktabs}
\makeatletter  
\newcommand\myitem[1][]{\item[#1]\refstepcounter{dummy}\def\@currentlabel{#1}}   
\makeatother

\newtheorem{thm}{Theorem}[section]
\newtheorem{cor}[thm]{Corollary}
\newtheorem{lem}[thm]{Lemma}
\newtheorem{prop}[thm]{Proposition}

\theoremstyle{definition}
\newtheorem{defn}[thm]{Definition}

\newtheorem{rem}[thm]{Remark}

\newtheorem*{ack}{Acknowledgements}

\newcommand{\A}{\mathrm{A}}
\newcommand{\T}{\mathrm{T}}
\newcommand{\Wb}{\mathrm{Wb}}
\newcommand{\W}{\mathrm{W}}

\newcommand{\NN}{\mathbb{N}}

\newcommand{\QQ}{\mathbb{Q}}
\newcommand{\RR}{\mathbb{R}}

\newcommand{\HH}{\mathbb{H}}

\newcommand{\MM}{\mathbb{M}}
\renewcommand{\SS}{\mathbb{S}}

\newcommand{\calB}{\mathcal{B}}
\newcommand{\calC}{\mathcal{C}}
\newcommand{\calD}{\mathcal{D}}

\newcommand{\calF}{\mathcal{F}}

\newcommand{\calM}{\mathcal{M}}
\newcommand{\calS}{\mathcal{S}}
\newcommand{\calL}{\mathcal{L}}
\newcommand{\calK}{\mathcal{K}}
\newcommand{\calX}{\mathcal{X}}

\newcommand{\eps}{\varepsilon}

\newcommand{\mset}[1]{\left\{\!\left\{ #1 \right\}\!\right\}}

\usepackage{stmaryrd}

\newcommand{\calP}{\mathcal{P}}

\DeclareMathOperator{\supp}{supp}

\DeclareMathOperator{\dist}{dist}

\DeclareMathOperator{\curv}{curv}

\DeclareMathOperator{\Geo}{Geo}
\DeclareMathOperator{\Opt}{Opt}

\DeclareMathOperator{\proj}{proj}
\DeclareMathOperator{\GeoSel}{GeoSel}
\DeclareMathOperator{\Adm}{Adm}
\DeclareMathOperator{\id}{id}
\DeclareMathOperator{\OptGeo}{OptGeo}

\newcommand{\ggamma}{\boldsymbol{\gamma}}

\newcommand{\aalpha}{\boldsymbol{\alpha}}
\newcommand{\vto}{\overset{v}{\rightharpoonup}}
\newcommand{\wto}{\overset{w}{\rightharpoonup}}
\renewcommand{\angle}{\measuredangle}

\makeatletter
\def\@setthanks{\vspace{-\baselineskip}\def\thanks##1{\@par##1}\thankses}
\makeatother

\title{Optimal partial transport for metric pairs
}

\author[M.~Che]{Mauricio Che}
\address[Che]{Department of Mathematical Sciences, Durham University, United Kingdom.}
\curraddr{Faculty of Mathematics, University of Vienna, Austria.}
\email{mauricio.adrian.che.moguel@univie.ac.at}
\thanks{Supported by CONACYT (Mexico) Doctoral Scholarship No.\ 769708.}

\date{\today}

\begin{document}

\begin{abstract}
In this article we study Figalli and Gigli's formulation of optimal transport between non-negative Radon measures in the setting of metric pairs. We carry over classical characterisations of optimal plans to this setting and prove that the resulting spaces of measures, $\calM_p(X,A)$, are complete, separable and geodesic whenever the underlying space, $X$, is so. We also prove that, for $p>1$, $\calM_p(X,A)$ preserves the property of being non-branching, and for $p=2$ it preserves non-negative curvature in the Alexandrov sense. Finally, we prove isometric embeddings of generalised spaces of persistence diagrams $\calD_p(X,A)$ into the corresponding spaces $\calM_p(X,A)$, generalising a result by Divol and Lacombe. As an application of this framework, we show that several known geometric properties of spaces of persistence diagrams follow from those of $\calM_p(X,A)$, including the fact that $\calD_2(X,A)$ is an Alexandrov space of non-negative curvature whenever $X$ is a proper non-negatively curved Alexandrov space.
\end{abstract}

\maketitle

\section{Introduction}\label{sec:introduction}
Optimal transport provides a geometric perspective on the study of spaces of probability measures. It can be formulated as the problem of minimising the total cost of transferring a given amount of mass between two given distributions, provided we know the cost of delivering mass between any two locations. Remarkably simple though this framework might seem, it has found applications to several different areas and has become a prominent body of research in the intersection of analysis, geometry, and probability (see the encyclopedic presentation in \cite{V09} for a detailed account of the history of this subject). 

An important restriction for classic optimal transport is that it only makes sense for measures with the same total mass, and it is therefore interesting to explore ways to define optimal transport between unbalanced measures. Different approaches have been proposed (see for example \cite{CM10,CPSV18,Figalli10,FG10,Hanin92,LMS18} and references therein), and very recently, Savar\'e and Sodini, in \cite{SS24}, presented a general formulation that includes previous approaches in the case of finite Radon measures.

In \cite{FG10}, Figalli and Gigli introduced a notion of optimal transport between non-negative Radon measures defined on bounded domains in Euclidean space, motivated by finding solutions to evolution equations with Dirichlet boundary conditions, in the spirit of Jordan--Kinderlehrer-Otto scheme \cite{JKO98}. The idea here is to use the boundary of the domain as an unlimited supply and storage of mass, such that it can be used to compensate the difference in mass between the given measures, as long as one pays the cost of transporting it to the boundary (see Section \ref{sec:definitions-OPT} for rigorous definitions). For the sake of simplicity, refer to this notion as {\em optimal partial transport}, although we acknowledge that this terminology has been used in the past to denote the related work by Figalli \cite{Figalli10} (see \cite{ChI15,Indrei13} for further work).

More recently, in \cite{DL21}, Divol and Lacombe revisited and applied this theory to study spaces of persistence diagrams, which arise in topological data analysis (see \cite{CSEH07,ELZ00,ZC05} for foundational results on this area). They extended definitions in \cite{FG10} to Radon measures on unbounded domains in $\RR^n$ and proved several useful properties of the resulting metric spaces (e.g.\ completeness, separability, characterisation of convergence, existence of Fr\'echet means). They also proved that the space of persistence diagrams endowed with the Wasserstein distances can be isometrically embedded into these spaces of measures, by regarding persistence diagrams as discrete measures on a half-plane in $\RR^2$. Also based on the framework of optimal partial transport, Bate and Garcia Pulido proved the embeddability of finite atomic measures, endowed with the optimal partial transport metrics, into Hilbert spaces \cite{BGP24}.

In this article, we extend the theory of optimal partial transport to the setting of {\em metric pairs}. Namely, we consider ordered pairs $(X,A)$, where $X$ is a proper (i.e.\ closed and bounded subsets are compact), complete and separable metric space, and $A$ is a closed, non-empty subset of $X$ (cf.\ \cite{AGC24,CGGGMS2022,CGGGMSV2024}), and non-negative Radon measures $\mu$ on $X\setminus A$ such that the function $\dist(\cdot, A)$ is in $L^p(\mu)$. As a result, we obtain one-parameter families of metric spaces $\{(\calM_p(X,A),\Wb_p)\}_{p\in[1,\infty)}$ associated to each metric pair $(X,A)$. This is a natural extension of Figalli and Gigli's definition, which is recovered when $X = \overline{\Omega}$ and $A=\partial \Omega$, where $\Omega$ is a bounded open set in $\RR^n$. We also recover Divol and Lacombe's setting when $X =\{(x,y)\in\RR^2:x\leq y\}$ and $A=\{(x,y)\in\RR^2:x=y\}$.

The structure of the paper is as follows. We start by proving the existence of optimal partial transport plans and the fact that $\Wb_p$ defines a metric on $\calM_p(X,A)$ (theorems \ref{thm:existence-of-optimal-partial-transport} and \ref{thm:wb-basic-properties}), generalising results in \cite{DL21,FG10}. Along the way, we correct an oversight in the proof of \cite[Proposition 3.2]{DL21} (see discussion in the proof of theorem \ref{thm:existence-of-optimal-partial-transport}). We then prove that the spaces $\calM_p(X,A)$, endowed with the optimal partial transport distance $\Wb_p$, are complete, separable and geodesic, provided that $X$ has the same properties (propositions \ref{prop:competeness and separability} and \ref{prop:geodesics}). Moreover, when $p>1$, we prove that $\calM_p(X,A)$ is non-branching (proposition \ref{prop:non-branching}), and for $p=2$, we prove that $\calM_2(X,A)$ is non-negatively curved in the Alexandrov sense, whenever $X$ is so (theorem \ref{t:alexandrov-OPT}). Finally, in section \ref{sec:embedding-OPT}, we prove that the generalised spaces of persistence diagrams $\calD_p(X,A)$ introduced in \cite{CGGGMS2022} can be isometrically embedded into $\calM_p(X,A)$, generalising \cite[Proposition 3.5]{DL21}. It is important to note that, combined with theorem \ref{t:alexandrov-OPT}, this gives an alternative proof of the fact that $\calD_2(X,A)$ is a non-negatively curved Alexandrov space whenever $X$ is proper and non-negatively curved \cite[Proposition 7.3]{CGGGMS2022} (see also \cite{BH24} for related results), which in the case $X =\{(x,y)\in\RR^2:x\leq y\}$ and $A=\{(x,y)\in\RR^2:x=y\}$ yields the known result, due to Turner, Mileyko, Mukherjee and Harer, that the usual space of persistence diagrams endowed with the $L^2$-Wasserstein distance is non-negatively curved \cite[Theorem 2.5]{TMMH14}.

Bubenik and Elchesen independently developed a theory of optimal transport for metric pairs in \cite{BE2025}. However, they did not address the metric and topological properties that we examine in this paper, such as completeness, separability, the existence of geodesics, the non-branching property, and non-negative curvature in the Alexandrov sense. Therefore, our results complement theirs.

Additionally, a few weeks after we completed the first version of this paper, Erbar and Meglioli published \cite{EM2025}, where they investigate a variational formulation of evolution equations with constant Dirichlet boundary conditions as gradient flows in the spaces $\calM_p(X, A)$, with $X$ being a Euclidean domain and $A$ its boundary. Extending these results to more general metric pairs would be an interesting direction for future research, potentially benefiting from the general framework developed in this paper.

\begin{ack}
I would like to express my gratitude to Fernando Galaz-Garc\'{\i}a for all his support and advise, as this work is part of my doctoral thesis under his supervision. I would also like to thank Jaime Santos-Rodr\'{\i}guez, Martin Kerin, Kohei Suzuki, Alp\'ar M\'esz\'aros,  Amit Einav, Mo Dick Wong, Norbert Peyerimhoff and Mohammad Al Attar for all the valuable comments and fruitful discussions during several sessions of our reading seminar in Durham University,  where the contents of this article were first discussed.  I am also grateful to Javier Casado, Manuel Mellado Cuerno, and Motiejus Valiunas for their feedback on an earlier version of this manuscript. Finally, I thank Ana Luc\'{\i}a Garc\'{\i}a Pulido and Wilhelm Klingenberg for their careful reading of my PhD thesis, which includes this manuscript. I have been financially supported by CONACYT (Mexico), through the Doctoral Scholarship No.\ 769708.
\end{ack}

\section{Preliminaries}

\subsection{Metric geometry}
We briefly recall well-known definitions and results about metric spaces, geodesics, and lower curvature bounds in the Alexandrov sense (see \cite{BBI01} for a more detailed discussion).
\begin{defn}
Let $X$ be a metric space. We denote by $\calC([a,b],X)$ the space of continuous curves $\xi\colon [a,b]\to X$, endowed with the uniform metric. For any $t\in [a,b]$, $e_t\colon \calC([a,b],X)\to X$ is the evaluation map given by $e_t(\xi)=\xi_t=\xi(t)$.  

A {\em constant speed geodesic}, or simply a {\em geodesic}, is a continuous curve $\xi\in \calC([0,1], X)$ such that
\[
d(\xi_s,\xi_t) = d(\xi_0,\xi_1)|s-t|
\]
for any $s,t\in [0,1]$. We denote by $\Geo(X)$ the space of geodesics in $X$, endowed with the uniform metric. We say that $X$ is a {\em geodesic space} if for any $x,y\in X$ there exists $\xi\in\Geo(X)$ such that $\xi_0 = x$ and $\xi_1 = y$.
\end{defn}

It is known that if $X$ is a complete, separable and geodesic space, then $\Geo(X)$, endowed with the uniform metric, is complete and separable. Moreover, if $X$ is a proper space then $\Geo(X)$ is proper as well, by the Arzel\`a-Ascoli theorem.

\begin{defn}\label{def:non branching}
We say that a geodesic space $X$ is \emph{non-branching} if for any $t\in(0,1)$ the map $(e_0,e_t)\colon \Geo(X)\to X\times X$ is injective.
\end{defn}

Alexandrov spaces are synthetic generalisations of Riemannian manifolds with sectional curvature bounded from below. This generalisation comes from the classical Toponogov's comparison theorem in Riemannian geometry (see \cite{GKT68, M89}).

More precisely, the $n$-dimensional \emph{model space} with constant sectional curvature $\kappa$ is given by
\[
\MM^n_\kappa = \left\lbrace
\begin{array}{ll}
\SS^n_\kappa,  &  \mbox{if } \kappa>0,\\
\RR^n,  &  \mbox{if } \kappa=0,\\
\HH^n_\kappa,  &  \mbox{if } \kappa<0,
\end{array}
\right.
\]
where $\SS^n_\kappa$ and $\HH^n_\kappa$ are the sphere and the hyperbolic space with their canonical metrics re-scaled by $1/\sqrt{|\kappa|}$. A \emph{geodesic triangle} $\triangle pqr$ in $X$ consists of three points $p,q,r\in X$ and three minimising geodesics $[pq],\ [qr],\ [rp]$ between those points. A \emph{comparison triangle} in $\MM^2_\kappa$ for $\triangle pqr$ is a geodesic triangle $\widetilde{\triangle}_\kappa pqr = \triangle \widetilde{p}\widetilde{q}\widetilde{r}$ in $\MM^2_\kappa$ such that
\[
d(\widetilde{p},\widetilde{q})=d(p,q),\ d(\widetilde{q},\widetilde{r})=d(q,r),\ d(\widetilde{r},\widetilde{p})=d(r,p).
\]

\begin{defn}\label{def:alex}
We say that $X$ is an \emph{Alexandrov space with curvature bounded below by $\kappa$}, and denote it by $\curv(X)\geq \kappa$, if $X$ is complete, geodesic and satisfies the following condition:
\begin{itemize}
    \item[$(\T_\kappa)$]\label{IT:PROPERTY_T} For any geodesic triangle $\triangle pqr$, any comparison triangle $\widetilde{\triangle}_\kappa pqr$ in $\MM^2_\kappa$ and any point $x\in [qr]$, the corresponding point $\widetilde{x}\in [\widetilde{q}\widetilde{r}]$ such that $d(\widetilde{q},\widetilde{x})=d(q,x)$ satisfies
\begin{align*}
    d(p,x)\geq d(\widetilde{p},\widetilde{x}).
\end{align*}
\end{itemize}
\end{defn}

\begin{rem}\label{rem:non-negative curvature}
Observe that condition $(\T_0)$ can be formulated as follows: for any geodesic triangle $p,q,r\in X$, any geodesic $\xi\in\Geo(X)$ with $\xi_0=q$ and $\xi_1=r$, 
\begin{equation}\label{eq:non-negative curvature}
    d(p,\xi_t)^2 \geq (1-t)d(p,q)^2+td(p,r)^2-(1-t)td(q,r)^2
\end{equation}
holds for any $t\in[0,1]$. This is due to the right hand side of the inequality above being the square of $|\widetilde{p}-\widetilde{\xi_t}|$ in the comparison triangle $\widetilde{\triangle}_0 pqr$.
\end{rem}

\begin{rem}\label{rem:angles and spaces of directions}
Condition $(\T_\kappa)$ is equivalent to the following:
\begin{itemize}
    \item[$(\A_\kappa)$] For any $p\in X$ and any $\xi^1,\xi^2\in \Geo(X)$ such that $\xi^1_0 = \xi^2_0 = p$, the function $(s,t)\mapsto \widetilde\angle_\kappa \xi^1_s p\xi^2_t$ is non-increasing in both $s$ and $t$, where $\widetilde\angle_\kappa \xi^1_s p\xi^2_t$ denotes the angle at $\widetilde{p}$ in the comparison triangle $\widetilde\triangle_\kappa \xi^1_sp\xi^2_t$. 
\end{itemize}
Condition $(\A_\kappa)$ implies that the \emph{angle} between $\xi^1,\xi^2\in\Geo(X)$ with $\xi^1_0=\xi^2_0$, given by
\[
\angle (\xi^1,\xi^2) =\lim_{s,t\to 0} \widetilde\angle_\kappa \xi^1_s p\xi^2_t 
\]
is well-defined. Geodesics that make an angle zero determine an equivalence class called \emph{geodesic direction}. The set of geodesic directions at a point $p\in X$ is denoted by $\Sigma'_p$. When equipped with the angle  metric $\angle$, the set $\Sigma_p'$ is a metric space. The completion of  $\left(\Sigma_p', \angle\right)$  is called the \emph{space of directions of $X$ at $p$}, and is denoted by $\Sigma_p$. Note that in a closed Riemannian manifold the space of directions at any point is isometric to the unit sphere in the tangent space to the manifold at the given point. 
\end{rem}

\subsection{Optimal transport and measure theory}
We now recall definitions and classical results from optimal transport (see \cite{AG13,S15,V09} for detailed expositions).

Let $X$ be a metric space and $\calP(X)$ be the set of Borel probability measures on $X$. The {\em support} of a Borel measure $\mu$ on $X$, denoted by $\supp(\mu)$, is the smallest closed set $E\subset X$ such that $\mu(X\setminus E) = 0$. For any Borel map $T\colon X\to Y$ between metric spaces, the \emph{push-forward map} $T_\#\colon \calP(X)\to \calP(Y)$ is given by 
\[T_\#\mu(E) = \mu(T^{-1}(E))\] 
for any Borel set $E\subset Y$. 

Let us recall the following Borel measurable selection principle (see, for example, \cite[Theorem 1]{A74}).

\begin{thm}\label{t:azoff}
Let $X$ and $Y$ be complete and separable metric spaces, and $E$ a closed, $\sigma$-compact (i.e. $E$ can be covered with countably many compact sets) subset of $X\times Y$. If $\pi^1\colon X\times Y\to X$ is the projection onto the first factor, then $\pi^1(E)$ is a Borel set in $X$ and there exists a Borel measurable map $\phi\colon \pi^1(E)\to Y$ whose graph is contained in $E$.
\end{thm}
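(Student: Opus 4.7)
The plan is to prove the two conclusions separately, using the $\sigma$-compactness of $E$ together with a classical measurable selection theorem. First I would write $E=\bigcup_{n\in\NN} K_n$ as a countable union of compact sets; since $\pi^1$ is continuous, each $\pi^1(K_n)$ is compact and hence closed, so
\[
\pi^1(E) = \bigcup_{n\in\NN}\pi^1(K_n)
\]
is an $F_\sigma$ subset of $X$ and, in particular, Borel. This handles the first assertion.

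For the selector I would apply the Kuratowski--Ryll-Nardzewski measurable selection theorem to the multifunction $F\colon\pi^1(E)\to 2^Y$ defined by $F(x)=E_x:=\{y\in Y:(x,y)\in E\}$. Two of the hypotheses are immediate: the fibres $E_x$ are non-empty by definition of the projection, and closed in $Y$ because $E$ is closed in $X\times Y$. The main obstacle I anticipate is verifying the measurability hypothesis, namely that for every open $U\subset Y$ the set
\[
\{x\in \pi^1(E): F(x)\cap U\neq \emptyset\} = \pi^1\bigl(E\cap(X\times U)\bigr)
\]
is Borel in $X$.

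To address this I would use that open subsets of compact metric spaces are $\sigma$-compact, which one sees by exhausting such an open set by the closed sets on which the distance to its complement is at least $1/k$. Therefore $K_n\cap(X\times U)$ is $\sigma$-compact for every $n$, and so is $E\cap(X\times U) = \bigcup_n K_n\cap(X\times U)$; its projection is then a countable union of compact sets, and in particular Borel. Once this measurability holds, Kuratowski--Ryll-Nardzewski (whose Polish-space hypotheses are guaranteed by the assumptions that $X$ and $Y$ are complete and separable) yields a Borel map $\phi\colon \pi^1(E)\to Y$ with $\phi(x)\in E_x$ for every $x\in\pi^1(E)$, i.e., with graph contained in $E$, which concludes the argument.
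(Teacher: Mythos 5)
The paper does not prove this statement: it is recorded as a known measurable selection principle and cited directly to Azoff (\cite[Theorem 1]{A74}), so there is no in-paper argument against which to compare. Evaluating your proposal on its own terms, it is correct. The first assertion is immediate as you say: writing $E=\bigcup_n K_n$ with each $K_n$ compact, the projections $\pi^1(K_n)$ are compact and $\pi^1(E)=\bigcup_n\pi^1(K_n)$ is $F_\sigma$. For the selector, the reduction to Kuratowski--Ryll-Nardzewski is clean: the fibres $E_x$ are non-empty on $\pi^1(E)$ and closed because $E$ is closed, and the only hypothesis requiring work is weak measurability of the multifunction. Your verification of that hypothesis is the genuine content of the argument, and it is handled correctly: for open $U\subset Y$ one has $\{x:E_x\cap U\neq\emptyset\}=\pi^1\bigl(E\cap(X\times U)\bigr)$, and $K_n\cap(X\times U)$ is an open subset of the compact metric space $K_n$, hence $\sigma$-compact (exhaust it by the closed sets where the distance to the complement is $\geq 1/k$), so $E\cap(X\times U)$ and therefore its projection are $\sigma$-compact and in particular Borel. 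With weak measurability established, Kuratowski--Ryll-Nardzewski (applicable since $Y$ is Polish and $\pi^1(E)$ carries its Borel $\sigma$-algebra) produces the desired Borel selector $\phi$ with graph in $E$. This is a legitimate and self-contained route to the cited result.
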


\begin{defn}\label{def:optimal transport}
Given $\mu,\nu\in\calP(X)$, we say that $\gamma\in\calP(X\times X)$ is a \emph{transport plan} between $\mu$ and $\nu$ if $\pi^1_\#\gamma = \mu$ and $\pi^2_\#\gamma=\nu$, where $\pi^1,\ \pi^2\colon X\times X\to X$ are the coordinate maps. The set of transport plans between $\mu$ and $\nu$ is denoted by $\Adm(\mu,\nu)$. For any $p\in[1,\infty)$, the \emph{$L^p$-Wasserstein metric} between $\mu,\nu\in\calP(X)$ is defined as
\begin{equation}\label{eq:wasserstein}
\W_p(\mu,\nu)=\inf_{\gamma\in\Adm(\mu,\nu)}\left(\int d(x,y)^p\ d\gamma(x,y)\right)^{1/p}.
\end{equation}
Any minimiser $\gamma$ for \eqref{eq:wasserstein} is an {\em optimal plan} between $\mu$ and $\nu$. The set of optimal plans between $\mu$ and $\nu$ is denoted by $\Opt(\mu,\nu)$.
\end{defn}

\begin{thm}\label{thm:classical optimal transport}
Let $X$ be a complete and separable metric space, and $\mu,\nu\in\calP(X)$ such that
\begin{equation}\label{eq:finite p-moment}
\int_X d(x,x_0)^p\ d\mu(x), \int_X d(y,x_0)^p\ d\nu(y) < \infty
\end{equation}
for some (and therefore any) $x_0\in X$. Then $\Opt(\mu,\nu)\neq \varnothing$. Moreover, $\W_p$ defines a metric in $\calP_p(X)$, the set of measures in $\calP(X)$ satisfying \eqref{eq:finite p-moment}.  
\end{thm}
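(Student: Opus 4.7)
The plan is to establish existence of optimal plans via the direct method of the calculus of variations and to verify the metric axioms by standard arguments.

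For existence, observe first that $\Adm(\mu,\nu)$ is nonempty, as it contains the product plan $\mu\otimes\nu$. Since $X$ is complete and separable and $\mu,\nu$ are Borel probability measures, both are tight by Ulam's theorem; this tightness transfers to $\Adm(\mu,\nu)$, which is therefore relatively compact in the weak topology by Prokhorov's theorem. The set $\Adm(\mu,\nu)$ is also weakly closed, since the marginal conditions $\pi^1_\#\gamma = \mu$ and $\pi^2_\#\gamma = \nu$ can be tested against functions in $C_b(X\times X)$ depending on a single coordinate. The cost functional $\gamma\mapsto\int d(x,y)^p\,d\gamma$ is lower semicontinuous with respect to weak convergence, since $d(x,y)^p$ is a monotone supremum of continuous bounded functions. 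A lower semicontinuous function on a compact set attains its minimum, so $\Opt(\mu,\nu)\neq\varnothing$. Moreover, the infimum is finite: evaluating the cost at $\mu\otimes\nu$ together with the elementary inequality
\[
d(x,y)^p \leq 2^{p-1}\bigl(d(x,x_0)^p + d(x_0,y)^p\bigr)
\]
and the $p$-moment hypothesis provides an explicit finite upper bound on $\W_p(\mu,\nu)$.

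For the metric axioms on $\calP_p(X)$: symmetry follows from the measurable involution $(x,y)\mapsto(y,x)$ of $X\times X$, which maps $\Adm(\mu,\nu)$ bijectively onto $\Adm(\nu,\mu)$ while preserving the cost. The implication $\W_p(\mu,\nu)=0\Rightarrow \mu=\nu$ uses that any zero-cost plan must be concentrated on the diagonal $\{(x,x):x\in X\}$, which forces both marginals to coincide; conversely, the plan $(\id,\id)_\#\mu$ witnesses $\W_p(\mu,\mu)=0$. The triangle inequality requires the gluing lemma: given $\mu_1,\mu_2,\mu_3\in\calP_p(X)$ and optimal plans $\gamma_{12}\in\Opt(\mu_1,\mu_2)$ and $\gamma_{23}\in\Opt(\mu_2,\mu_3)$, one disintegrates them with respect to their common marginal $\mu_2$ and glues to produce $\gamma\in\calP(X^3)$ whose $(1,2)$- and $(2,3)$-projections are $\gamma_{12}$ and $\gamma_{23}$, respectively. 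Then $\pi^{1,3}_\#\gamma\in\Adm(\mu_1,\mu_3)$, and Minkowski's inequality in $L^p(\gamma)$ applied to the pointwise triangle inequality $d(x_1,x_3)\leq d(x_1,x_2)+d(x_2,x_3)$ yields $\W_p(\mu_1,\mu_3)\leq \W_p(\mu_1,\mu_2)+\W_p(\mu_2,\mu_3)$.

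The main technical ingredient is the disintegration theorem underlying the gluing construction, which relies on $X$ being Polish so that regular conditional probabilities exist. A secondary point of care is the weak lower semicontinuity of the unbounded cost $d(x,y)^p$, handled by approximation from below by continuous bounded functions together with the monotone convergence theorem.
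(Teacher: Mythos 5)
Your proof is correct and follows the standard textbook argument for this classical theorem. The paper itself does not prove this result—it is stated as background (citing \cite{AG13,S15,V09}) and invoked later (e.g.\ in proposition \ref{prop:criteria for optimality} and lemma \ref{lem:gluing})—so there is no ``paper's proof'' to compare against. Your route (Ulam tightness of the marginals transferring to $\Adm(\mu,\nu)$, Prokhorov compactness, weak closedness of the marginal constraints, lower semicontinuity of the cost via monotone approximation by truncations $\min(d^p,n)\in C_b$, and the gluing/disintegration argument for the triangle inequality) is exactly the argument found in those references, and all the steps are sound; in particular you correctly handle finiteness of $\W_p$ via the product plan together with $d(x,y)^p\leq 2^{p-1}(d(x,x_0)^p+d(x_0,y)^p)$, and you correctly reduce $\W_p(\mu,\nu)=0\Rightarrow\mu=\nu$ to the existence of an optimal plan concentrated on the diagonal, which you have already established.
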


The following result, commonly known in the optimal transport jargon as the {\em gluing lemma}, plays a role in the proof of theorem \ref{thm:classical optimal transport}, and will be useful later on.

\begin{thm}\label{thm:classical gluing}
Let $\mu^1,\mu^2,\mu^3\in \calP(X)$, $\gamma^{12}\in \Adm(\mu^1,\mu^2)$, and $\gamma^{23}\in \Adm(\mu^2,\mu^3)$. Then there exists $\gamma^{123}\in \calP(X\times X\times X)$ such that
\begin{align*}
\pi^{12}_\#\gamma^{123}=\gamma^{12},\\
\pi^{23}_\#\gamma^{123}=\gamma^{23}.
\end{align*}

More generally, if $\Gamma^1\in\calP(\calX^1)$, $\Gamma^2\in\calP(\calX^2)$, and $F^i\colon \calX^i \to \calX$, $i=1,2$, are measurable maps such that $F^1_\#\Gamma^1=F^2_\#\Gamma^2$, then there exists 
\[
\hat{\Gamma}\in\calP(\{(x_1,x_2)\in \calX^1\times\calX^2 : F^1(x_1)=F^2(x_2)\})
\]
such that $\pi^i_\#\hat{\Gamma}= \Gamma^i$, $i=1,2$. 
\end{thm}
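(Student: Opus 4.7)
My plan is to derive the classical gluing statement (the first paragraph) from the more general version, so I will focus on establishing the latter. The main tool will be the disintegration theorem for Borel probability measures on Polish spaces, which applies here since the underlying metric spaces are assumed to be complete and separable.

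First, I set $\mu := F^1_\# \Gamma^1 = F^2_\# \Gamma^2 \in \calP(\calX)$. Disintegrating each $\Gamma^i$ with respect to $F^i$, one obtains a Borel family $\{(\Gamma^i)_y\}_{y \in \calX}$ of probability measures such that $(\Gamma^i)_y$ is concentrated on $(F^i)^{-1}(y)$ for $\mu$-almost every $y$, and
\[
\Gamma^i = \int_\calX (\Gamma^i)_y \, d\mu(y), \qquad i = 1, 2.
\]
I then define
\[
\hat{\Gamma} := \int_\calX (\Gamma^1)_y \otimes (\Gamma^2)_y \, d\mu(y) \in \calP(\calX^1 \times \calX^2).
\]
Since $(\Gamma^1)_y \otimes (\Gamma^2)_y$ is supported on $(F^1)^{-1}(y) \times (F^2)^{-1}(y) \subseteq \{(x_1,x_2) : F^1(x_1) = F^2(x_2)\}$ for $\mu$-a.e.\ $y$, the measure $\hat{\Gamma}$ is concentrated on the fibered set, as required. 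The marginal identities follow by a direct computation: for any Borel $A \subseteq \calX^1$,
\[
\pi^1_\# \hat{\Gamma}(A) = \int_\calX (\Gamma^1)_y(A) \cdot (\Gamma^2)_y(\calX^2) \, d\mu(y) = \int_\calX (\Gamma^1)_y(A) \, d\mu(y) = \Gamma^1(A),
\]
and symmetrically $\pi^2_\# \hat{\Gamma} = \Gamma^2$.

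The classical three-measure version is then recovered by the following choice: take $\calX^1 = \calX^2 = X \times X$, $\calX = X$, and set $F^1 = \pi^2$ (second coordinate projection, whose push-forward of $\gamma^{12}$ is $\mu^2$) and $F^2 = \pi^1$ (first coordinate projection, whose push-forward of $\gamma^{23}$ is also $\mu^2$). The fibered set $\{((x_1,x_2),(x_2',x_3)) : x_2 = x_2'\}$ is canonically identified with $X \times X \times X$, and the image of $\hat{\Gamma}$ under this identification is the desired $\gamma^{123}$, whose marginals on the first two and last two coordinates coincide with $\gamma^{12}$ and $\gamma^{23}$ respectively, by the marginal property of $\hat{\Gamma}$.

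The one subtle point demanding care is the invocation of the disintegration theorem: one needs $\calX$ to be a Polish (or at least Radon) space and the maps $F^i$ to be Borel, so that the disintegrations $y \mapsto (\Gamma^i)_y$ are genuine measurable kernels. This ensures that the product kernel $y \mapsto (\Gamma^1)_y \otimes (\Gamma^2)_y$ is jointly measurable and that the integral defining $\hat{\Gamma}$ makes sense as a Borel probability measure. With this technical ingredient in place, all remaining verifications are applications of Fubini's theorem and the defining property of the disintegration.
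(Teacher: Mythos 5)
The paper does not prove this theorem: it appears in the preliminaries as a recalled classical result, with the reader directed to the standard references (\cite{AG13,S15,V09}) for details, so there is no in-paper argument to compare against. Your proof is correct and is the standard one: disintegrate each $\Gamma^i$ with respect to $F^i$ over the common push-forward $\mu = F^1_\#\Gamma^1 = F^2_\#\Gamma^2$, form the product kernel $y\mapsto(\Gamma^1)_y\otimes(\Gamma^2)_y$, and integrate against $\mu$; the marginal identities and the concentration on the fibered set $\{F^1(x_1)=F^2(x_2)\}$ then follow directly, and the three-measure version is recovered by taking $F^1=\pi^2$, $F^2=\pi^1$ and identifying $\{((x_1,x_2),(x_2',x_3)):x_2=x_2'\}$ with $X^3$. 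One small sharpening: the disintegration theorem needs the \emph{source} spaces $\calX^i$ to be standard Borel (Polish) and the maps $F^i$ Borel; the target $\calX$ only needs to carry the push-forward measure. You gesture at this, but it is the $\calX^i$ rather than $\calX$ whose Polishness is doing the work. In the paper's setting everything is a complete separable metric space, so the hypotheses are automatically met, and the measurability of the product kernel follows by the usual monotone class argument from the measurability of each disintegration.
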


It is also possible to characterise optimal plans in terms of cyclical monotonicity and the existence of Kantorovich potentials.

\begin{defn}\label{def:cyclical monotonicity and c-concavity}
We say that a set $\Gamma\subset X\times X$ is {\em $c$-cyclically monotone} if,  for any $n\in\NN$, any $\{(x_i,y_i)\}_{i=1}^{n}\subset \Gamma$ and any permutation $\sigma$ of $\{1,\dots,n\}$,
\[
\sum_{i=1}^{n} d(x_i,y_i)^p\leq \sum_{i=1}^{n} d(x_i,y_{\sigma(i)})^p
\] holds.
The {\em $c$-transform} of a function $\phi\colon X\to\RR\cup\{-\infty\}$ is given by
\[
\phi^c(y) = \inf_{x\in X} \{d(x,y)^p - \phi(x)\}.
\]
We say that a function $\phi\colon X\to \RR\cup\{-\infty\}$ is {\em $c$-concave} if there exists a function $\psi\colon X\to \RR\cup\{-\infty\}$ such that $\phi(x) = \psi^c(x)$.

The {\em $c$-superdifferential} of a $c$-concave function $\phi$ is the set
\[
\partial^c_+\phi = \{(x,y)\in X\times X: c(x,y) = \phi(x)+\phi^c(y)\}.
\]
\end{defn}

\begin{thm}\label{thm:classical criteria optimal transport}
Let $X$ be a complete and separable metric space, $\mu,\nu\in\calP_p(X)$, and $\gamma\in\Adm(\mu,\nu)$. Then the following conditions are equivalent:
\begin{enumerate}
    \item $\gamma\in\Opt(\mu,\nu)$;
    \item $\supp(\gamma)$ is $c$-cyclically monotone;
    \item There is a $c$-concave function $\phi$ (known as a {\em Kantorovich potential} of $\gamma$) with $\max\{0,\phi\}\in L^1(\mu)$ and $\supp(\gamma)\subset \partial^+_c\phi$.
\end{enumerate}
\end{thm}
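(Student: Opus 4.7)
The plan is to prove the classical chain $(1) \Rightarrow (2) \Rightarrow (3) \Rightarrow (1)$, which is the standard cyclic approach to the Kantorovich duality / cyclical monotonicity equivalence.

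For $(1) \Rightarrow (2)$, I would argue by contradiction: if $\supp(\gamma)$ is not $c$-cyclically monotone, then there exist points $(x_1,y_1),\ldots,(x_n,y_n) \in \supp(\gamma)$ and a permutation $\sigma$ strictly decreasing the total cost. By continuity of the cost, we can find small product neighbourhoods $U_i \times V_i$ of each $(x_i,y_i)$ on which the strict inequality persists, with $\gamma(U_i \times V_i) > 0$. Using the gluing lemma (theorem \ref{thm:classical gluing}) to build a new plan which, on these neighbourhoods, swaps the $y$-coordinates according to $\sigma$ while keeping the marginals fixed, we obtain an admissible plan with strictly smaller cost, contradicting optimality. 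This step is standard but requires care to ensure the swap is measure-preserving on the marginals.

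For $(2) \Rightarrow (3)$, I would carry out the Rockafellar-type construction of the Kantorovich potential. Fix a base point $(\bar x, \bar y) \in \Gamma := \supp(\gamma)$ and define
\[
\phi(x) := \inf\left\{\sum_{i=0}^{N}\bigl[d(x_{i+1},y_i)^p - d(x_i,y_i)^p\bigr]\right\},
\]
where the infimum ranges over $N \in \NN$ and chains $(x_0,y_0) = (\bar x, \bar y)$, $(x_i,y_i) \in \Gamma$ for $1 \le i \le N$, and $x_{N+1} = x$. The $c$-cyclical monotonicity of $\Gamma$ ensures $\phi(\bar x) = 0$, so $\phi$ is not identically $-\infty$; being a pointwise infimum of functions of the form $d(x,y)^p + \text{const}$, one checks that $\phi = \psi^c$ for a suitable $\psi$, hence is $c$-concave. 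A direct telescoping argument shows $\supp(\gamma) \subset \partial^c_+\phi$. Integrability $\max\{0,\phi\} \in L^1(\mu)$ follows from the $p$-moment condition \eqref{eq:finite p-moment} by comparing $\phi(x)$ to $d(x,\bar y)^p - d(\bar x,\bar y)^p$ via the one-step chain, which is an $L^1(\mu)$ upper bound.

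For $(3) \Rightarrow (1)$, I would use the duality inequality. By definition of the $c$-transform, $d(x,y)^p \ge \phi(x) + \phi^c(y)$ pointwise, with equality on $\partial^c_+\phi \supset \supp(\gamma)$. Then for any $\gamma' \in \Adm(\mu,\nu)$,
\[
\int d(x,y)^p \, d\gamma' \ge \int \bigl[\phi(x) + \phi^c(y)\bigr]\, d\gamma' = \int \phi \, d\mu + \int \phi^c \, d\nu = \int d(x,y)^p \, d\gamma,
\]
proving optimality. The main obstacle I expect is the integrability bookkeeping throughout: the functions $\phi$ and $\phi^c$ are only a priori bounded above by an $L^1$ function, so one has to justify that the splitting of the integral $\int (\phi + \phi^c)\, d\gamma' = \int \phi\, d\mu + \int \phi^c\, d\nu$ makes sense (e.g.\ by truncation, or by noting that each of $\int \phi\, d\mu$ and $\int \phi^c\, d\nu$ is well-defined in $[-\infty, +\infty)$ thanks to the hypothesis $\max\{0,\phi\} \in L^1(\mu)$ together with the fact that $\gamma$ realises equality and has finite cost by \eqref{eq:finite p-moment}).
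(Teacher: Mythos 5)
The paper states this theorem in its Preliminaries section as a classical result, citing \cite{AG13,S15,V09}, and does not supply a proof; there is therefore no in-paper argument to compare against. Your outline correctly reproduces the standard argument from those references: $(1)\Rightarrow(2)$ by the perturbation/swap argument on small product neighbourhoods of a putatively non-monotone family in $\supp(\gamma)$; $(2)\Rightarrow(3)$ via the Rockafellar-type construction of a Kantorovich potential, with the one-step chain giving the $L^1(\mu)$ upper bound on $\max\{0,\phi\}$ and the telescoping estimate giving $\supp(\gamma)\subset\partial^c_+\phi$; and $(3)\Rightarrow(1)$ by the duality inequality. You also correctly flag the two points that require care, namely the admissibility of the swapped plan and the integrability bookkeeping in the duality step. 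One small remark on $(1)\Rightarrow(2)$: the competitor plan is usually constructed not by the gluing lemma but by removing a common-mass piece of $\gamma|_{U_i\times V_i}$ for each $i$ and adding back the product of its marginals on $U_i\times V_{\sigma(i)}$, which makes admissibility transparent; this is a presentational point rather than a gap.
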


We will consider a couple of different notions of convergence of measures.

\begin{defn}\label{d:weak convergence}
A sequence $\{\mu\}_{n\in\NN}$ of bounded Borel measures on $X$ is {\em weakly convergent} to $\mu$, and we write $\mu_n\wto\mu$, if 
\[
\int_X f\ d\mu_n\to \int_X f\ d\mu\quad \text{for any}\ f\in\calC_b(X),
\]
where $\calC_b(X)$ is the set of continuous and bounded functions on $X$.
\end{defn}

Lemma \ref{lem:prokhorov} below is a particular case of the classical Prokhorov's theorem (see \cite[Theorem 4.2]{Kallenberg17}). 

\begin{lem}\label{lem:prokhorov}
    Let $\calF$ be a set of non-negative Borel measures with finite mass (i.e.\ $\mu_n(X)<\infty$ for all $n\in\NN$) on a complete and separable metric space $X$. Then $\calF$ is weakly precompact (i.e.\ every sequence in $\calF$ has a weakly convergent subsequence) if and only if the following conditions hold:
    \begin{enumerate}
        \item\label{bounded total variation} $\calF$ has {\em uniformly bounded total variation}, i.e.\ $\sup_{\mu\in\calF} \mu(X) < \infty$,
        \item\label{tightness} $\calF$ is {\em tight}, i.e.\ for any $\eps>0$ there exist a compact set $K\subset X$ such that $\sup_{\mu\in\calF}\mu(X\setminus K)<\eps$.
    \end{enumerate}
\end{lem}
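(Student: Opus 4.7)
The plan is to execute the classical two-sided Prokhorov argument.

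\emph{Necessity.} Suppose $\calF$ is weakly precompact. Testing against the constant function $\mathbf{1}\in\calC_b(X)$, any weakly convergent subsequence $\mu_{n_k}\wto\mu$ satisfies $\mu_{n_k}(X)\to\mu(X)<\infty$; hence if $\calF$ were not uniformly bounded in total mass, a sequence $\mu_n\in\calF$ with $\mu_n(X)\to\infty$ would admit no weakly convergent subsequence, contradicting precompactness. For tightness I would argue by contradiction: assuming there is $\eps_0>0$ such that every compact $K\subset X$ admits $\mu_K\in\calF$ with $\mu_K(X\setminus K)\geq\eps_0$, I would use the separability of $X$, Ulam's theorem (any finite Borel measure on a Polish space is tight), and the Portmanteau inequality $\liminf_n \mu_n(U)\geq \mu(U)$ for open $U$, to construct a sequence in $\calF$ for which no subsequence can converge weakly to a genuine finite Radon measure---again a contradiction.

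\emph{Sufficiency.} Fix a sequence $\{\mu_n\}\subset\calF$. By hypothesis~(1) and passing to a subsequence I may assume $\mu_n(X)\to M\in[0,\infty)$; the case $M=0$ is trivial. By hypothesis~(2) I choose compacts $K_1\subset K_2\subset\cdots$ in $X$ with $\sup_n\mu_n(X\setminus K_j)<1/j$ (nestedness can be arranged by replacing $K_j$ with $\bigcup_{i\leq j}K_i$). Each $K_j$ is a compact metric space, so $C(K_j)$ is separable; picking countable dense families $\{f_{j,\ell}\}_\ell$ in $C(K_j)$ and applying a Cantor diagonal argument, I extract a further subsequence $\mu_{n_k}$ along which $\int_{K_j} f_{j,\ell}\,d\mu_{n_k}$ converges for every $j,\ell$. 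The limits extend by linearity and density to positive bounded linear functionals on each $C(K_j)$, and the Riesz representation theorem produces Borel measures $\nu_j$ on $K_j$. The nestedness gives the compatibility $\nu_{j+1}|_{K_j}=\nu_j$, so the $\nu_j$ assemble into a single Radon measure $\mu$ on $X$ supported on $\bigcup_j K_j$, with $\mu(X)\leq M$.

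To verify $\mu_{n_k}\wto\mu$, given $f\in\calC_b(X)$ and $\eta>0$, I choose $j$ with $\|f\|_\infty/j<\eta$ and $f_{j,\ell}$ with $\sup_{K_j}|f-f_{j,\ell}|<\eta$; then $|\int f\,d\mu_{n_k}-\int f\,d\mu|$ splits into an interior term on $K_j$ (controlled by the diagonal convergence and the uniform approximation) and an exterior term (bounded by $\|f\|_\infty(\mu_{n_k}(X\setminus K_j)+\mu(X\setminus K_j))<2\eta$), yielding the desired convergence as $k\to\infty$ and $\eta\to 0$. The main obstacle is the coherent glueing step: turning the family of positive functionals obtained from Riesz on each $K_j$ into one Radon measure on $X$ that represents the weak limit against every test function in $\calC_b(X)$. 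The uniform tightness hypothesis is precisely what drives this step, ensuring that each $\mu_n$ has negligible mass outside the $K_j$'s so that exterior contributions vanish in the limit and the candidate $\mu$ is genuinely Radon on all of $X$, not merely on a $\sigma$-compact subset.
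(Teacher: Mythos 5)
The paper itself does not give a proof of this lemma: it records it as a special case of the classical Prokhorov theorem and cites \cite[Theorem 4.2]{Kallenberg17}, so there is no internal argument to compare against. Your attempt is therefore being judged on its own merits.

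The overall blueprint is the right one, but there is a concrete error in the sufficiency direction. You claim that the nested exhaustion $K_1\subset K_2\subset\cdots$ yields the exact compatibility $\nu_{j+1}|_{K_j}=\nu_j$ for the measures obtained from Riesz representation on each $C(K_j)$. This is not true. What the construction actually gives is the one-sided comparison $\nu_j\leq\nu_{j+1}|_{K_j}$: for non-negative $f\in C(K_{j+1})$,
\[
\int_{K_{j+1}}f\,d\nu_{j+1}=\lim_k\int_{K_{j+1}}f\,d\mu_{n_k}\ \geq\ \lim_k\int_{K_j}f\,d\mu_{n_k}=\int_{K_j}f\,d\nu_j ,
\]
and the inequality can be strict, because mass of $\mu_{n_k}$ sitting in $K_{j+1}\setminus K_j$ may, after passage to the limit in $C(K_{j+1})^*$, land on a point of $K_j$ and be recorded by $\nu_{j+1}$ but not by $\nu_j$. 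A concrete instance on $X=\RR$ with $K_j=[-j,j]$: take $\mu_n=(1-\delta)\delta_0+\delta\,\delta_{j+1/n}$ with $0<\delta<1/j$, which respects your tightness bound for $K_j$; then $\nu_j=(1-\delta)\delta_0$ but $\nu_{j+1}|_{K_j}=(1-\delta)\delta_0+\delta\,\delta_j$, so $\nu_{j+1}(K_j)-\nu_j(K_j)=\delta>0$. Under the tightness hypothesis the discrepancy is controlled by $\sup_n\mu_n(X\setminus K_j)<1/j$, but it is not zero, so the $\nu_j$ do not literally assemble by restriction as stated. The step is salvageable---for instance by defining $\mu(E)=\lim_j\nu_j(E\cap K_j)$ as a monotone limit using the one-sided inequality, and then running the interior/exterior split with the $1/j$ error budget; or by extending the $\mu_n$ to a metrizable compactification of $X$ and then using tightness to show the limit charges only $X$---but either route requires an argument you do not supply, and you explicitly rest the ``coherent glueing step'' on the false identity. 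This is a genuine gap, not a stylistic difference. The necessity direction (mass bound via the test function $\mathbf{1}$, tightness via a Portmanteau contradiction) is a correct sketch, and the diagonal extraction and the $2\|f\|_\infty/j$ tail estimate in the verification step are sound.
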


An even weaker notion of convergence is that of vague convergence of Radon measures.

\begin{defn}\label{def:radon}
A {\em Radon measure} $\mu$ on $X$ is a Borel measure that is both \textit{finite on compact sets} (i.e.\ $\mu(K)<\infty$ for any compact set $K\subset X$) and \textit{inner regular} (i.e.\ for any Borel measurable set $E\subset X$, $\mu(E)$ can be approximated from below by the measures of compact sets $K\subset E$). We denote by $\calM(X)$ the set of Radon measures on $X$. 
\end{defn}

\begin{rem}\label{rem:locally finite equals radon}
If $X$ is a separable and locally compact metric space, any Borel measure on $X$ that is finite on compact sets is also inner regular (see, for example, \cite[Theorem 7.8]{Folland1999}). Since we only consider proper metric spaces (which, in particular, are separable and locally compact), we will only need to verify finiteness on compact sets in order to prove that a Borel measure is Radon. 
\end{rem}

\begin{defn}\label{d:vague convergence}
A sequence $\{\mu_n\}_{n\in\NN}$ of Radon measures on $X$ is \textit{vaguely convergent} to $\mu$, and we write $\mu_n\vto\mu$, if 
\begin{equation}\label{eq:vague-convergence}
\int_X f\ d\mu_n\to \int_X f\ d\mu\quad \text{for any $f\in \calC_c(X)$,}
\end{equation}
where $\calC_c(X)$ is the set of compactly supported continuous functions on $X$.
\end{defn}

Lemmas \ref{lem:vague-relative-compactness} and \ref{lem:vague-convergence} below are adaptations of known results about the vague topology in the setting of separable, locally compact metric spaces (see \cite[Section 15.7]{K86} for details). In particular, these results are applicable to proper metric spaces.

\begin{lem}\label{lem:vague-relative-compactness}
Let $X$ be a separable, locally compact metric space, and let $\calF\subset \calM(X)$. Then $\calF$ is vaguely relatively compact if and only if
\[
\sup\{\gamma(K):\gamma\in \calF\} <\infty
\]
for any compact $K\subset X$.
\end{lem}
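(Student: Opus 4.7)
The plan is to prove each implication separately, with the forward direction being a short contrapositive argument and the reverse direction resting on an exhaustion of $X$ together with the Riesz representation theorem.

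For the forward implication, I would argue by contradiction. Suppose $\calF$ is vaguely relatively compact but there is a compact set $K\subset X$ and a sequence $\{\gamma_n\}\subset\calF$ with $\gamma_n(K)\to\infty$. Since $X$ is locally compact, each point of $K$ has a relatively compact open neighbourhood, and by compactness $K$ is contained in a finite union of such sets, hence in some open set $U$ with $\overline{U}$ compact. Using Urysohn's lemma in the locally compact Hausdorff space $X$, pick $f\in\calC_c(X)$ with $\mathbf{1}_K\leq f\leq\mathbf{1}_U$. By vague relative compactness, a subsequence $\gamma_{n_k}\vto\gamma$ for some $\gamma\in\calM(X)$. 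Then
\[
\int_X f\,d\gamma_{n_k} \geq \gamma_{n_k}(K)\to\infty,
\]
contradicting $\int f\,d\gamma_{n_k}\to\int f\,d\gamma<\infty$.

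For the reverse implication, I would use an exhaustion argument combined with a diagonal extraction. Since $X$ is separable and locally compact, one can write $X=\bigcup_{m\in\NN} U_m$ where each $U_m$ is open, $\overline{U_m}$ is compact, and $\overline{U_m}\subset U_{m+1}$. Fix a sequence $\{\gamma_n\}\subset\calF$. For each $m$, the measures $\gamma_n|_{\overline{U_m}}$ are finite Borel measures on the compact (hence complete and separable) space $\overline{U_m}$, and by hypothesis $\sup_n \gamma_n(\overline{U_m})<\infty$. Compactness of $\overline{U_m}$ gives tightness automatically, so Lemma \ref{lem:prokhorov} produces a subsequence converging weakly on $\overline{U_m}$ to some finite Borel measure $\mu_m$. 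A standard diagonal extraction yields a single subsequence, still denoted $\gamma_{n_k}$, such that $\gamma_{n_k}|_{\overline{U_m}}\wto\mu_m$ for every $m$.

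To package this into vague convergence, define the positive linear functional $\Lambda\colon\calC_c(X)\to\RR$ by $\Lambda(f):=\lim_{k\to\infty}\int_X f\,d\gamma_{n_k}$. This is well-defined because any $f\in\calC_c(X)$ has $\supp(f)\subset U_m$ for some $m$ (using the exhaustion), and for such $f$, $\int f\,d\gamma_{n_k}=\int f\,d(\gamma_{n_k}|_{\overline{U_m}})\to\int f\,d\mu_m$. Linearity and positivity are immediate. By the Riesz--Markov representation theorem for locally compact Hausdorff spaces, there is a Radon measure $\gamma\in\calM(X)$ with $\Lambda(f)=\int f\,d\gamma$ for every $f\in\calC_c(X)$, and by construction $\gamma_{n_k}\vto\gamma$.

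The main obstacle is the consistency check in the reverse direction: one must ensure that the measures $\mu_m$ obtained on the different $\overline{U_m}$ can be glued into a single Radon measure on all of $X$. Phrasing the argument via the limiting functional $\Lambda$ on $\calC_c(X)$ and invoking Riesz representation sidesteps the need to verify compatibility $\mu_{m+1}|_{\overline{U_m}}=\mu_m$ by hand, and simultaneously produces a genuine Radon measure, which in our proper-space setting is guaranteed by Remark \ref{rem:locally finite equals radon}.
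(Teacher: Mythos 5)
Your proof is correct and self-contained. Note that the paper itself does not prove this lemma; it simply cites it as an adaptation of known results (to \cite[Section~15.7]{K86}), so there is no in-paper proof to compare against. Both directions of your argument are sound: the forward direction correctly uses Urysohn's lemma to bound $\gamma_{n_k}(K)$ by a test-function integral which must converge along a vaguely convergent subsequence (implicitly using that the vague topology on $\calM(X)$ is metrizable for $X$ separable and locally compact, so relative compactness is sequential), and the reverse direction's combination of an exhaustion by relatively compact open sets, Prokhorov on each compact $\overline{U_m}$, a diagonal extraction, and the Riesz--Markov representation theorem to realise the limiting functional $\Lambda$ as integration against a Radon measure is exactly the standard route; your remark that passing through $\Lambda$ sidesteps the consistency check $\mu_{m+1}|_{\overline{U_m}}=\mu_m$ is a good observation, and invoking Remark~\ref{rem:locally finite equals radon} to reconcile the Riesz output with the paper's definition of Radon measure is the right closing step.
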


\begin{lem}\label{lem:vague-convergence}
Let $X$ be a separable, locally compact metric space, and let $\{\gamma_n\}_{n\in\NN}\subset \calM(X)$. Then the following are equivalent:
\begin{enumerate}
    \item $\gamma_n\vto \gamma$ for some $\gamma\in \calM(X)$.
    \item For any bounded open $U\subset X$ and any bounded closed $F\subset X$,
    \[
    \gamma(U) \leq \liminf_{n\to\infty} \gamma_n(U)
    \]
    and
    \[
    \gamma(F) \geq \limsup_{n\to\infty} \gamma_n(F).
    \]
\end{enumerate}
\end{lem}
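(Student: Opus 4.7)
The plan is to prove both implications via a standard approximation scheme that sandwiches the indicator of a bounded open (resp.\ closed) set between continuous compactly supported functions, using that a separable, locally compact metric space is $\sigma$-compact so that Urysohn's lemma supplies the required cutoffs.

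For $(1)\Rightarrow(2)$, I would first take a bounded open $U$, write $U=\bigcup_j K_j$ as an increasing union of compact sets, and use Urysohn to choose $f_j\in\calC_c(X)$ with $\mathbf{1}_{K_j}\leq f_j\leq \mathbf{1}_U$ and $f_j\nearrow \mathbf{1}_U$. For each fixed $j$, vague convergence gives
\[
\int f_j\,d\gamma = \lim_{n\to\infty}\int f_j\,d\gamma_n \leq \liminf_{n\to\infty}\gamma_n(U),
\]
and monotone convergence on the left yields the lower semicontinuity $\gamma(U)\leq \liminf_n\gamma_n(U)$. For a bounded closed $F$, I would enclose $F$ in a compact set $K$ (the lemma is applied only to proper spaces; see Remark~\ref{rem:locally finite equals radon}) and use Urysohn to produce $g_j\in\calC_c(X)$ supported in $K$ with $g_j\searrow\mathbf{1}_F$. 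Dominated convergence (with $g_1\in L^1(\gamma)$) combined with $\int g_j\,d\gamma = \lim_n\int g_j\,d\gamma_n \geq \limsup_n\gamma_n(F)$ then gives $\gamma(F)\geq \limsup_n\gamma_n(F)$.

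For $(2)\Rightarrow(1)$, I would take $f\in\calC_c(X)$, reduce by linearity and scaling to the case $0\leq f\leq 1$, and invoke the layer-cake formula
\[
\int f\,d\mu = \int_0^1 \mu(\{f>t\})\,dt = \int_0^1 \mu(\{f\geq t\})\,dt.
\]
The superlevel and closed-superlevel sets are respectively bounded open and bounded closed subsets of $\supp f$. Fatou's lemma together with the lower semicontinuity hypothesis on $\{f>t\}$ gives $\liminf_n\int f\,d\gamma_n \geq \int f\,d\gamma$, while reverse Fatou combined with the upper semicontinuity hypothesis on $\{f\geq t\}$ yields $\limsup_n\int f\,d\gamma_n\leq \int f\,d\gamma$. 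Hence $\int f\,d\gamma_n\to\int f\,d\gamma$ and $\gamma_n\vto\gamma$.

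The main obstacle I anticipate is justifying the application of reverse Fatou in the second direction: one needs a uniform integrable dominating function for $t\mapsto \gamma_n(\{f\geq t\})$, which I would obtain by applying the upper semicontinuity hypothesis to $F=\supp f$ itself, yielding an eventual uniform bound $\gamma_n(\supp f)\leq \gamma(\supp f)+1$. A secondary subtlety is the interpretation of `bounded' in a general locally compact separable metric space; in the proper spaces where this lemma is actually used, bounded closed sets are compact, and the Urysohn approximations above are unambiguously legitimate.
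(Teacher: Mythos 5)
Your proof is correct; it is the standard portmanteau argument for vague convergence. Note that the paper does not actually prove this lemma — it refers the reader to \cite[Section~15.7]{K86} — so your argument supplies details the paper leaves to the reference. You correctly handle the two non-trivial points: the uniform integrable bound needed for the reverse-Fatou step, which you obtain by applying the closed-set hypothesis to $F=\supp f$ itself; and the caveat that the $(1)\Rightarrow(2)$ direction for closed $F$ requires enclosing $F$ in a compact set, which uses properness (or reading `bounded' as `relatively compact'). That caveat is in fact essential and not merely cosmetic: on $\RR$ equipped with the bounded metric $d(x,y)=\min(|x-y|,1)$, which is separable and locally compact, the sequence $\gamma_n=\delta_n$ converges vaguely to $0$, yet $F=\RR$ is bounded and closed with $\gamma_n(F)=1$ for every $n$, violating the $\limsup$ inequality in $(2)$. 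So the lemma as literally stated needs properness or the relatively-compact interpretation of `bounded'; in the proper spaces where the paper actually invokes the lemma, the two notions coincide, exactly as you observed. (Note that the $(2)\Rightarrow(1)$ direction has no such issue, since the level sets $\{f>t\}$ and $\{f\ge t\}$ for $t>0$ are automatically contained in the compact $\supp f$.)
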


\section{Optimal partial transport for metric pairs}\label{sec:definitions-OPT}

A \emph{metric pair} is an ordered pair $(X,A)$ where $X$ is a metric space and $A\subseteq X$ is closed and non-empty. Moreover, we will assume throughout the article that $X$ is complete, separable and proper.

We denote by $\calM(X)$ the set of non-negative {\em Radon measures} on $X$, i.e.\ inner regular Borel measures that are finite on compact sets, and endow it with the vague topology. Then, given a metric pair $(X,A)$ and $p\in [1,\infty)$, we define 
\[
\calM_p(X,A) = \left\{\mu\in \calM(\Omega): \int_\Omega d(x,A)^p\ d\mu(x) < \infty\right\},
\]
where $\Omega = X\setminus A$, and for any $\mu,\nu \in \calM_p(X,A)$, we define the {\em $L^p$-optimal partial transport metric} by
\begin{equation}\label{e:wb}
\Wb_p(\mu,\nu) = \inf_{\gamma\in \Adm(\mu,\nu)} \left(\int_{E_\Omega} d(x,y)^p\ d\gamma(x,y)\right)^{1/p},
\end{equation}
where $\Adm(\mu,\nu)$ is the set of non-negative Borel measures $\gamma$ on $E_\Omega = X\times X\setminus A\times A$ whose marginals restricted to $\Omega$ are $\mu$ and $\nu$, i.e.\ the following equations hold
\begin{equation}\label{e:admissible1}
\pi^1_\#\gamma|_\Omega = \mu,\quad \pi^2_\#\gamma|_\Omega = \nu.
\end{equation}

\begin{rem}\label{rem:partial transport plans are radon}
    Observe that condition \eqref{e:admissible1} implies that $\gamma\in \calM(E_\Omega)$. Indeed, for any compact $K\subset E_\Omega$ there are compact sets $K', K''\subset \Omega$ such that $K\subset K'\times X\cup X\times K''$, therefore
    \[
    \gamma(K) \leq \gamma(K'\times X) + \gamma(X\times K'') = \mu(K')+\nu(K'')<\infty,
    \]
    since $\mu,\nu\in\calM(\Omega)$.  Moreover, since $E_\Omega$ is an open subset of the separable and locally compact metric space $X\times X$, it is separable and locally compact itself, and the claim follows from Remark \ref{rem:locally finite equals radon}. Furthermore, it follows that $\Adm(\mu,\nu)$ is a vaguely relatively compact subset of $\calM(E_\Omega)$, by lemma \ref{lem:vague-relative-compactness}.
\end{rem}

\begin{rem}\label{rem:support in diagonal of AxA}
Given $\gamma \in \Adm(\mu,\nu)$, we define
\[
C(\gamma) = \int_{E_\Omega} d(x,y)^p\ d\gamma(x,y)
\]
and denote $\gamma_R^S = \gamma|_{R\times S}$, for any $R,S\subset X$ such that $R\times S \subset E_\Omega$. In particular, 
\[
\gamma = \gamma_\Omega^\Omega + \gamma_\Omega^A + \gamma_A^\Omega.
\]
\end{rem}

\begin{rem}
Regarding the terminology of {\em partial} transport plans, it comes from the fact that, whenever $\gamma$ satisfies \eqref{e:admissible1}, the measure $\gamma_\Omega^\Omega$ can be regarded as a classical transport plan between the measures $\widetilde\mu = \pi^1_\#(\gamma^{\Omega}_{\Omega})$ and $\widetilde\nu = \pi^1_\#(\gamma^{\Omega}_{\Omega})$, which satisfy that $\widetilde\mu\leq\mu$ and $\widetilde\nu\leq\nu$, therefore partially transporting mass between $\mu$ and $\nu$.
\end{rem}

As a consequence of the Borel measurable selection principle (theorem \ref{t:azoff}), we obtain the following lemma, which will be useful in the sequel.

\begin{lem}\label{lem:projection onto A}
    Let $(X,A)$ be a metric pair. Then there exists a Borel measurable map $\proj_A\colon X\to A$ such that $d(x,\proj_A(x))= d(x,A)$ for all $x\in X$.
\end{lem}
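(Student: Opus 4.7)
The plan is to apply the Borel measurable selection principle (theorem \ref{t:azoff}) to the set
\[
E = \{(x,a)\in X\times A : d(x,a) = d(x,A)\}.
\]
Since $A$ is closed in the complete and separable space $X$, both $X$ and $A$ are complete and separable, so the ambient product $X\times A$ satisfies the hypotheses of theorem \ref{t:azoff}.

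First I would verify that $E$ is closed in $X\times A$. The function $(x,a)\mapsto d(x,a) - d(x,A)$ is continuous on $X\times A$ (using continuity of the metric and continuity of $x\mapsto d(x,A)$), and $E$ is the preimage of $\{0\}$ under this function, hence closed. Next I would show that $E$ is $\sigma$-compact: since $X$ is proper, closed balls in $X$ are compact, and the same holds for $A$ as a closed subset of $X$; fixing a basepoint $x_0\in X$, the sets $E\cap (\overline{B}(x_0,n)\times \overline{B}(x_0,n))$ are closed subsets of compact sets, hence compact, and their union over $n\in\NN$ covers $E$.

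Then I would argue that $\pi^1(E) = X$. For any $x\in X$, the properness of $X$ implies that the set $A\cap \overline{B}(x, d(x,A)+1)$ is compact and non-empty (since $A\neq\vn$ and $d(x,A)<\infty$), so the continuous function $a\mapsto d(x,a)$ attains its infimum $d(x,A)$ at some $a\in A$, giving $(x,a)\in E$.

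With these three properties in hand, theorem \ref{t:azoff} produces a Borel measurable map $\phi\colon \pi^1(E) = X \to A$ whose graph lies in $E$, which by definition of $E$ means $d(x,\phi(x)) = d(x,A)$ for every $x\in X$. Setting $\proj_A := \phi$ concludes the proof. The only mildly delicate point is checking $\sigma$-compactness, but properness of $X$ makes it immediate; everything else is a direct application of the selection theorem.
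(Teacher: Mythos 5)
Your proof is correct and follows the same route as the paper: define the same set $E\subset X\times A$, verify it is closed, $\sigma$-compact, and projects onto all of $X$, and then invoke the Borel selection principle (theorem~\ref{t:azoff}). The paper states these properties of $E$ without elaboration, so your write-up simply fills in the routine verifications.
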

\begin{proof}
Since $A$ is closed and $X$ is complete and separable, it follows that $A$ is complete and separable endowed with the restricted metric. Moreover, since $X$ is proper and $d$ is continuous, the set
\[
E = \{(x,y)\in X\times A: d(x,y)=d(x,A)\}
\]
is $\sigma$-compact, closed, and $\pi^1(E)=X$. Therefore, by theorem \ref{t:azoff}, the claim follows.
\end{proof}

\begin{rem}
Observe that $\Wb_p(\mu,\nu)$ is well-defined, non-negative, and finite. Indeed, let $\mu,\nu\in\calM_p(X,A)$. Then a straightforward computation shows that the measure
\[
\gamma = (\id,\proj_A)_\# \mu + (\proj_A,\id)_\# \nu
\]
is in $\Adm(\mu,\nu)$, and it satisfies 
\[
0\leq C(\gamma) = \int_\Omega d(x,A)^p\ d\mu(x) + \int_\Omega d(y,A)^p\ d\nu(y) < \infty.
\]
Furthermore, the zero measure belongs to $\calM_p(X,A)$, and due to lemma \ref{lem:projection onto A}, for any $\mu\in \calM_p(X,A)$ we have 
\begin{equation}\label{eq:distance-to-zero}
\Wb_p^p(\mu,0) = \int_X d(x,A)^p\ d\mu(x).
\end{equation}
Indeed, the partial transport plan $\gamma=(\id,\proj_A)_\#\mu\in \Adm(\mu,0)$ satisfies
\[
C(\gamma) = \int_{\Omega} d(x,\proj_A(x))^p\ d\mu(x) = \int_{\Omega} d(x,A)^p\ d\mu(x).
\] 
On the other hand, for any $\widetilde\gamma\in \Adm(\mu,0)$ we have that $\pi^2_\#\widetilde\gamma|_\Omega = 0$, which implies that $\widetilde\gamma(X\times \Omega) = 0$. Therefore we have
\begin{align*}
C(\widetilde\gamma) &= \int_{E_\Omega} d(x,y)^p\ d\widetilde\gamma(x,y) \\
&= \int_{\Omega\times A} d(x,y)^p\ d\widetilde\gamma(x,y)\\
&\geq \int_{\Omega\times A} d(x,A)^p\ d\widetilde\gamma(x,y) \\
&= \int_\Omega d(x,A)^p\ d\mu(x).
\end{align*}
This proves equation \eqref{eq:distance-to-zero}.
\end{rem}

The following lemma is a natural generalisation of the well-known gluing lemma for probability measures. 

\begin{lem}\label{lem:gluing}
Let $\mu^1,\mu^2,\mu^3\in \calM_p(X,A)$, $\gamma^{12}\in \Adm(\mu^1,\mu^2)$, and $\gamma^{23}\in \Adm(\mu^2,\mu^3)$. Then there exists a Borel measure $\gamma^{123}$ on $X\times X\times X$ such that
\begin{align*}
\pi^{12}_\#\gamma^{123}=\gamma^{12}+\sigma^{12},\\
\pi^{23}_\#\gamma^{123}=\gamma^{23}+\sigma^{23},
\end{align*}
where $\sigma^{12}$ and $\sigma^{23}$ are Borel measures supported on $\Delta(A\times A)$.    
\end{lem}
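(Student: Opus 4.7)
The plan is to decompose each plan $\gamma^{12}$ and $\gamma^{23}$ according to whether the shared ``middle'' coordinate lies in $\Omega$ or in $A$, glue the $\Omega$-parts via a disintegration argument, and handle the $A$-parts by duplicating the $A$-valued coordinate. Since $\gamma^{12}$ and $\gamma^{23}$ give no mass to $A\times A$, we have the decompositions
\[
\gamma^{12} = \gamma^{12}|_{X\times\Omega} + \gamma^{12}|_{\Omega\times A}, \qquad \gamma^{23} = \gamma^{23}|_{\Omega\times X} + \gamma^{23}|_{A\times\Omega}.
\]

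For the $\Omega$-parts, observe that $\pi^2_\#(\gamma^{12}|_{X\times\Omega}) = \mu^2 = \pi^1_\#(\gamma^{23}|_{\Omega\times X})$, and this common middle marginal is a $\sigma$-finite Radon measure on the $\sigma$-compact space $\Omega$. Disintegrating with respect to $\mu^2$, I would write
\[
\gamma^{12}|_{X\times\Omega} = \int_\Omega \eta^{12}_y \otimes \delta_y\, d\mu^2(y), \qquad \gamma^{23}|_{\Omega\times X} = \int_\Omega \delta_y \otimes \eta^{23}_y\, d\mu^2(y),
\]
for suitable Borel families $\{\eta^{12}_y\}_{y\in\Omega}$, $\{\eta^{23}_y\}_{y\in\Omega}$ of probability measures on $X$, and define
\[
\eta := \int_\Omega \eta^{12}_y \otimes \delta_y \otimes \eta^{23}_y\, d\mu^2(y),
\]
a Borel measure on $X\times\Omega\times X$ which by construction satisfies $\pi^{12}_\#\eta = \gamma^{12}|_{X\times\Omega}$ and $\pi^{23}_\#\eta = \gamma^{23}|_{\Omega\times X}$.

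For the $A$-parts, I would set
\[
\alpha := (\pi^1,\pi^2,\pi^2)_\#\left(\gamma^{12}|_{\Omega\times A}\right), \qquad \beta := (\pi^1,\pi^1,\pi^2)_\#\left(\gamma^{23}|_{A\times\Omega}\right),
\]
which are Borel measures on $X\times X\times X$ supported on $\Omega\times\Delta(A\times A)$ and $\Delta(A\times A)\times\Omega$, respectively. Defining $\gamma^{123} := \eta + \alpha + \beta$, a direct computation of the projections yields
\[
\pi^{12}_\#\gamma^{123} = \gamma^{12} + \pi^{12}_\#\beta, \qquad \pi^{23}_\#\gamma^{123} = \gamma^{23} + \pi^{23}_\#\alpha,
\]
and by construction $\sigma^{12} := \pi^{12}_\#\beta$ and $\sigma^{23} := \pi^{23}_\#\alpha$ are supported on $\Delta(A\times A)$, as required.

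The main technical obstacle is that Theorem \ref{thm:classical gluing}, as stated, applies to probability measures, while $\mu^2$ and hence the relevant pieces of $\gamma^{12}$ and $\gamma^{23}$ may carry infinite total mass. I would bypass this either by invoking the standard extension of the disintegration theorem (and with it the gluing lemma) to $\sigma$-finite Radon measures on Polish spaces, or, more elementarily, by exhausting $\Omega$ with an increasing sequence of compact sets $K_n$ of finite $\mu^2$-measure, normalising and applying Theorem \ref{thm:classical gluing} on each piece, and summing the resulting measures to obtain $\eta$.
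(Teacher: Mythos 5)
Your proof is correct and follows essentially the same construction as the paper's: decompose each plan by whether the shared middle coordinate lies in $\Omega$ or $A$, glue the $\Omega$-parts over the common marginal $\mu^2$, and duplicate the $A$-valued coordinate to build the extra triples (your $\alpha$ and $\beta$ coincide with the paper's $\widetilde\sigma^{23}$ and $\widetilde\sigma^{12}$, and your $\sigma^{12},\sigma^{23}$ agree with theirs). You are somewhat more explicit than the paper in flagging that $\mu^2$ is a $\sigma$-finite Radon measure rather than a probability measure and in outlining how to extend the gluing/disintegration step to that setting, which is a point the paper passes over with a brief remark.
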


\begin{proof}
From the hypothesis, we can see that
\[
\mu^2 = \pi^2_\#\gamma^{12}|_{\Omega} = \pi^2_\#((\gamma^{12})_{X}^{\Omega}+ (\gamma^{12})_{\Omega}^{A})|_{\Omega}= \pi^2_\#(\gamma^{12})_{X}^{\Omega}.
\]
Analogously, $\mu^2 = \pi^1_\#(\gamma^{23})_{\Omega}^{X}$. Therefore, by applying the classical gluing lemma, since $\mu^2$ is a Radon measure, we can find a measure $\widetilde{\gamma}^{123}$ on $X\times \Omega\times X$ such that 
\begin{align*}
    \pi_\#^{12}\widetilde\gamma^{123} = (\gamma^{12})_{X}^{\Omega},\\
    \pi_\#^{23}\widetilde\gamma^{123}= (\gamma^{23})_{\Omega}^{X}.
\end{align*}
We define
\begin{align*}
    \widetilde\sigma^{12} &= (\pi^1,\pi^1,\pi^2)_\#(\gamma^{23})^{\Omega}_{A},\\
    \widetilde\sigma^{23} &= (\pi^1,\pi^2,\pi^2)_\#(\gamma^{12})_{\Omega}^{A},\\
    \gamma^{123} &= \widetilde\gamma^{123} + \widetilde\sigma^{12} + \widetilde\sigma^{23},\\
    \sigma^{12} &= (\pi^1,\pi^1)_\#(\gamma^{23})^{\Omega}_{A},\\
    \sigma^{23} &= (\pi^2,\pi^2)_\#(\gamma^{12})_{\Omega}^{A}.
\end{align*}
We can check that these measures work. Indeed,
\begin{align*}
    \pi^{12}_\#\gamma^{123} &= \pi^{12}_\#\widetilde\gamma^{123} + \pi^{12}_\#\widetilde\sigma^{12} + \pi^{12}_\#\widetilde\sigma^{23}\\
    &= (\gamma^{12})_{X}^{\Omega} + \pi^{12}_\#(\pi^1,\pi^1,\pi^2)_\#(\gamma^{23})^{\Omega}_{A} + \pi^{12}_\#(\pi^1,\pi^2,\pi^2)_\#(\gamma^{12})_{\Omega}^{A}\\
    &= (\gamma^{12})_{X}^{\Omega} + (\pi^1,\pi^1)_\#(\gamma^{23})^{\Omega}_{A} + (\pi^1,\pi^2)_\#(\gamma^{12})_{\Omega}^{A}\\
    &= (\gamma^{12})_{X}^{\Omega} + \sigma^{12} + (\gamma^{12})_{\Omega}^{A}\\
    &= \gamma^{12} + \sigma^{12}.
\end{align*}
Similarly with $\pi^{23}_\#\gamma^{123}$.
\end{proof}

The following result guarantees the existence of optimal partial transport plans in the setting of proper metric spaces, generalising \cite[p.\ 4]{FG10} and \cite[Proposition 3.2]{DL21}.

\begin{thm}\label{thm:existence-of-optimal-partial-transport}
Let $(X,A)$ be a metric pair and $p\in [1,\infty)$. Then, for any $\mu,\nu\in\calM_p(X,A)$, the set
\[
\Opt(\mu,\nu) = \{\gamma\in \Adm(\mu,\nu) : C(\gamma) = \Wb^p_p(\mu,\nu)\}
\]
is non-empty. Moreover, the set $\Opt(\mu,\nu)$ is vaguely compact.
\end{thm}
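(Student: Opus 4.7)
The plan is a standard direct-method argument: I would extract a vague limit of a minimising sequence and then verify admissibility and optimality of the limit. The principal obstacle is verifying the marginal conditions in \eqref{e:admissible1}, because for $f \in \calC_c(\Omega)$ the lifted function $f \circ \pi^1$ has support $\supp(f) \times X$, which is not compact in $E_\Omega$. This is where the oversight in \cite[Proposition~3.2]{DL21} appears to lie, and it must be handled via a cutoff-and-tail argument driven by the uniform cost bound together with the fact that $\supp f$ is at positive distance from $A$.

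Let $\{\gamma_n\}_{n\in\NN} \subset \Adm(\mu,\nu)$ be a minimising sequence, so $C(\gamma_n) \to \Wb_p^p(\mu,\nu)$. By Remark~\ref{rem:partial transport plans are radon}, $\Adm(\mu,\nu)$ is vaguely relatively compact in $\calM(E_\Omega)$, so up to a subsequence $\gamma_n \vto \gamma$ for some $\gamma \in \calM(E_\Omega)$.

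For admissibility, fix $f \in \calC_c(\Omega)$ and note that, since $A$ is closed and $\supp f$ is compact with $\supp f \cap A = \varnothing$, one has $\delta := d(\supp f, A) > 0$. Pick $x_0 \in \supp f$ and continuous cutoffs $\chi_R \in \calC_c(X)$ with $0 \leq \chi_R \leq 1$ and $\chi_R = 1$ on $\overline{B_R(x_0)}$. Then $(x,y) \mapsto f(x)\chi_R(y)$ lies in $\calC_c(E_\Omega)$, so vague convergence yields $\int f(x)\chi_R(y) \, d\gamma_n \to \int f(x)\chi_R(y) \, d\gamma$. Since $\supp f \subset \Omega$, the condition $\pi^1_\#\gamma_n|_\Omega = \mu$ gives $\int f(x) \, d\gamma_n = \int f \, d\mu$, and for $(x,y)$ with $x \in \supp f$ and $d(y, x_0) \geq R$, the bound $d(x,y) \geq R - \diam(\supp f)$ produces the Markov-type estimate
\[
\left|\int f(x)(1 - \chi_R(y)) \, d\gamma_n \right| \leq \frac{\|f\|_\infty \, C(\gamma_n)}{(R - \diam(\supp f))^p},
\]
uniformly in $n$ for $R$ large. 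Letting $n \to \infty$ first and then $R \to \infty$, with a monotone/dominated convergence step to absorb $\chi_R$ into $\gamma$ (justified since $\int f(x)\chi_R \, d\gamma$ stays bounded by $\int f \, d\mu$), one obtains $\int f \, d\pi^1_\#\gamma|_\Omega = \int f \, d\mu$. The argument for the second marginal is symmetric.

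For optimality, the cost $(x,y) \mapsto d(x,y)^p$ is continuous and non-negative on $E_\Omega$, so one can choose an increasing sequence $\eta_R \in \calC_c(E_\Omega)$ with $\eta_R \uparrow 1$ on $E_\Omega$; each $\eta_R \, d^p$ lies in $\calC_c(E_\Omega)$, so $\int \eta_R \, d^p \, d\gamma = \lim_{n} \int \eta_R \, d^p \, d\gamma_n \leq \liminf_n C(\gamma_n)$. Monotone convergence on $\gamma$ as $R \to \infty$ yields $C(\gamma) \leq \Wb_p^p(\mu,\nu)$, hence $\gamma \in \Opt(\mu,\nu)$. Finally, to obtain vague compactness of $\Opt(\mu,\nu)$, note that any sequence in $\Opt(\mu,\nu) \subset \Adm(\mu,\nu)$ admits a vague subsequential limit; the marginal argument shows this limit is admissible, and the lower-semicontinuity step pins its cost down to $\Wb_p^p(\mu,\nu)$, so it again belongs to $\Opt(\mu,\nu)$, showing $\Opt(\mu,\nu)$ is vaguely closed in the vaguely relatively compact set $\Adm(\mu,\nu)$.
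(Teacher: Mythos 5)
Your proposal is correct and follows essentially the same strategy as the paper: extract a vague subsequential limit of a minimising sequence (using Remark~\ref{rem:partial transport plans are radon}), then pass to the limit in the marginal and cost conditions using the same Markov/Chebyshev-type tail estimate driven by the uniform cost bound. The paper packages this estimate into Lemma~\ref{lem:weak convergence of minimising sequence} and Corollary~\ref{lem:gamma-eps is closed} (phrasing it via H\"older and weak precompactness of the restricted plans $\gamma_C^X$, so that one can test against the merely bounded $f\circ\pi^1$), whereas you apply the bound inline with compactly supported cutoffs $f(x)\chi_R(y)\in\calC_c(E_\Omega)$ — a slightly more direct execution of the same idea, and it correctly identifies the same oversight in Divol--Lacombe.
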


We will need the following technical lemma for the proof of theorem \ref{thm:existence-of-optimal-partial-transport}, and later on, for the proof of theorem \ref{thm:wb-basic-properties}. The proof of this lemma follows ideas from \cite[Footnotes in Sections 2.1 and 2.2]{AG13}.

\begin{lem}\label{lem:weak convergence of minimising sequence}
Let $\mu,\nu\in\calM_p(X,A)$, $\eps>0$, and let $\Opt_\eps(\mu,\nu)$ be the set of $\gamma \in \Adm(\mu,\nu)$ such that 
\begin{equation}\label{eq:gamma-eps}
\Wb_p^p(\mu,\nu)\leq C(\gamma)\leq \Wb_p^p(\mu,\nu)+\eps.
\end{equation}
Then, for any compact $C\subset \Omega$, the set $\{\gamma_{C}^{X}:\gamma\in\Opt_\eps(\mu,\nu)\}$ is weakly relatively compact.
\end{lem}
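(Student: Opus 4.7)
I would apply Prokhorov's theorem (lemma \ref{lem:prokhorov}) to the family $\calF = \{\gamma_C^X : \gamma \in \Opt_\eps(\mu,\nu)\}$ of non-negative Borel measures on $X\times X$. Thus the task reduces to verifying two things: that $\calF$ has uniformly bounded total mass, and that $\calF$ is tight.

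For the mass bound, I first observe that because $C \subset \Omega$ is disjoint from $A$, one has $(C\times X) \cap (A\times A) = \varnothing$, so $(C\times X) \cap E_\Omega = C\times X$. Combining this with the marginal condition \eqref{e:admissible1}, for every $\gamma \in \Adm(\mu,\nu)$ we get
\[
\gamma_C^X(X\times X) \;=\; \gamma(C\times X) \;=\; \pi^1_\#\gamma|_\Omega(C) \;=\; \mu(C),
\]
which is finite and independent of $\gamma$ since $\mu$ is Radon and $C$ is compact.

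For tightness, fix a basepoint $x_0\in X$ and choose $R_0>0$ with $C\subset \overline{B}_{R_0}(x_0)$, possible because $C$ is compact. Since $X$ is proper, $K_R := C\times \overline{B}_R(x_0)$ is a compact subset of $X\times X$ for every $R>R_0$. For $(x,y) \in C\times(X\setminus \overline{B}_R(x_0))$ the triangle inequality gives $d(x,y) \geq d(x_0,y)-d(x_0,x) \geq R-R_0$, uniformly whether $y\in A$ or $y\in\Omega$, so a Markov-type tail estimate yields
\[
\gamma_C^X\bigl((X\times X)\setminus K_R\bigr) \;=\; \gamma\bigl(C\times(X\setminus \overline{B}_R(x_0))\bigr) \;\leq\; \frac{1}{(R-R_0)^p}\int_{C\times X} d(x,y)^p\, d\gamma(x,y) \;\leq\; \frac{\Wb_p^p(\mu,\nu)+\eps}{(R-R_0)^p},
\]
where the last inequality uses that $\gamma$ is concentrated on $E_\Omega$ (so integration over $C\times X$ is dominated by $C(\gamma)$) together with the upper bound from \eqref{eq:gamma-eps}. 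The right-hand side is independent of $\gamma$ and tends to zero as $R\to\infty$, so $\calF$ is tight.

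There is no deep obstacle in the argument; the only point requiring mild care is the marginal identity $\gamma(C\times X) = \mu(C)$, where one must notice that the restriction of $\pi^1_\#\gamma$ to $\Omega$ already pins down the mass on compact subsets of $\Omega$. Beyond that, the proof is a uniform tail estimate driven by the cost bound on $\Opt_\eps(\mu,\nu)$, after which Prokhorov's theorem delivers the desired weak relative compactness.
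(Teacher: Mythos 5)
Your proof is correct and follows essentially the same strategy as the paper: apply Prokhorov's theorem after establishing the uniform mass bound $\gamma_C^X(X\times X)=\mu(C)$ and a uniform tail estimate driven by the cost bound on $\Opt_\eps(\mu,\nu)$. The only difference is cosmetic: you obtain the tail bound directly via a Markov/Chebyshev inequality using $d(x,y)\geq R-R_0$ on $C\times(X\setminus\overline{B}_R(x_0))$, whereas the paper applies H\"older's inequality to the integrand $d(y,p_0)-d(x,p_0)$ and then cancels a factor of $\gamma(\cdot)^{p-1}$ — both routes yield the identical bound $\gamma(C\times(X\setminus\overline{B}_R))\leq(\Wb_p^p(\mu,\nu)+\eps)/(R-R_0)^p$, and your version is arguably the more streamlined of the two.
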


\begin{proof}
Take $p_0\in X$ and $0<r<R$ such that $C\subset \overline{B}_r(p_0)\subset \overline{B}_R(p_0)$. By H\"older's inequality, for any $\gamma\in\Adm(\mu,\nu)$, we have
\begin{multline}\label{eq:tight1}
    \left|\int_{C\times (X\setminus \overline{B}_R(p_0))}d(y,p_0)-d(x,p_0)\ d\gamma(x,y)\right|^p\\ \leq \gamma(C\times (X\setminus \overline{B}_R(p_0)))^{p-1}\int_{C\times (X\setminus \overline{B}_R(p_0))} d(x,y)^p\ d\gamma(x,y),
\end{multline}
whereas
\begin{equation}\label{eq:tight2}
    \left|\int_{C\times (X\setminus \overline{B}_R(p_0))}d(y,p_0)-d(x,p_0)\ d\gamma(x,y)\right|^p \geq (R-r)^p\gamma(C\times (X\setminus \overline{B}_R(p_0)))^p
\end{equation}
since $\dist(\overline{B}_r(p_0),X\setminus \overline{B}_R(p_0)) \geq R-r$. Combining \eqref{eq:gamma-eps}, \eqref{eq:tight1}, 
 and \eqref{eq:tight2}, we get
\[
\gamma(C\times (X\setminus \overline{B}_R(p_0))) \leq \dfrac{\Wb_p^p(\mu,\nu)+\eps}{(R-r)^p},
\]
for any $\gamma\in\Opt_\eps(\mu,\nu)$, which can be made arbitrarily small by fixing $r$ and letting $R$ tend to infinity. This proves that $\{\gamma_{C}^{X}:\gamma\in\Opt_\eps(\mu,\nu)\}$ is tight. Moreover, since $\Opt_\eps(\mu,\nu)\subset\Adm(\mu,\nu)$, each $\gamma_C^X$ has total mass $\mu(C)<\infty$, therefore $\{\gamma_{C}^{X}:\gamma\in\Opt_\eps(\mu,\nu)\}$ has uniformly bounded total variation. By theorem \ref{lem:prokhorov}, the claim follows.
\end{proof}

As a consequence of the previous lemma, we obtain that the sets $\Opt_\eps(\mu,\nu)$ are closed with respect to the vague topology. The proof follows along the same lines of those of \cite[Proposition 3.2]{DL21} and arguments in \cite[p.\ 4]{FG10}. Observe that the chain of equations in \eqref{eq:equations of divol-lacombe} below is the same as one in the proof of \cite[Proposition 3.2]{DL21}, but we need lemma \ref{lem:weak convergence of minimising sequence} to justify it, even in the Euclidean setting. This is because, even when $f\in \calC_c(\Omega)$, the composition $f\circ \pi^1\colon \Omega\times X\to \RR$ is not of compact support when $X$ is not compact.
\begin{cor}\label{lem:gamma-eps is closed}
Let $\mu,\nu\in\calM_p(X,A)$, $\eps>0$, and let $\Opt_\eps(\mu,\nu)$ be defined as in lemma \ref{lem:weak convergence of minimising sequence}. Then $\Opt_\eps(\mu,\nu)$ is vaguely closed.
\end{cor}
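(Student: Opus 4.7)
The plan is: take $\{\gamma_n\} \subset \Opt_\eps(\mu,\nu)$ with $\gamma_n \vto \gamma$ in $\calM(E_\Omega)$ and verify (i) $\gamma \in \Adm(\mu,\nu)$ and (ii) $C(\gamma) \leq \Wb_p^p(\mu,\nu) + \eps$. Step (ii) is the easier one: since $E_\Omega$ is an open subset of the proper space $X\times X$, it is locally compact and $\sigma$-compact, so one can approximate the non-negative continuous function $(x,y)\mapsto d(x,y)^p$ monotonically from below by functions in $\calC_c(E_\Omega)$ and then combine vague convergence with monotone convergence to obtain the standard lower-semicontinuity estimate
\[
C(\gamma) \leq \liminf_{n\to\infty} C(\gamma_n) \leq \Wb_p^p(\mu,\nu) + \eps.
\]
Once (i) is known, the reverse inequality $C(\gamma) \geq \Wb_p^p(\mu,\nu)$ is immediate from the definition of $\Wb_p$.

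The heart of the argument is (i). I would treat the first marginal only, the second being symmetric, and reduce to checking $\int_\Omega f \, d(\pi^1_\# \gamma|_\Omega) = \int_\Omega f \, d\mu$ for each $f \in \calC_c(\Omega)$. This is exactly the step that vague convergence alone does not handle, as the author warns: with $C := \supp f \subset \Omega$, the function $f \circ \pi^1$ is bounded and continuous on $E_\Omega$ but is not compactly supported there when $X$ is unbounded, so we cannot feed it directly into the definition of $\gamma_n \vto \gamma$. To bypass this, apply lemma \ref{lem:weak convergence of minimising sequence} to deduce that the family of restrictions $\{(\gamma_n)_C^X\}_n$ is weakly relatively compact. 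A Prokhorov extraction along any subsequence yields a weak limit $\eta$, and testing against an arbitrary $g\in\calC_c(E_\Omega)$ with $\supp g \subset C\times X$ (such $g$, being compactly supported in $E_\Omega$, are legal test functions for vague convergence and extend by zero to $\calC_b(X\times X)$) shows
\[
\int g\, d\eta = \lim_n \int g\, d(\gamma_n)_C^X = \lim_n \int g\, d\gamma_n = \int g\, d\gamma = \int g\, d\gamma_C^X.
\]
Since such $g$ separate Radon measures on $C\times X$, $\eta = \gamma_C^X$. A standard subsequence-of-subsequence argument then upgrades this to $(\gamma_n)_C^X \wto \gamma_C^X$ for the full sequence.

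With weak convergence of the restrictions in hand, testing against the bounded continuous function $f\circ\pi^1$ gives
\[
\int_\Omega f\, d\mu = \int (f\circ\pi^1)\, d(\gamma_n)_C^X \longrightarrow \int(f\circ\pi^1)\, d\gamma_C^X = \int_\Omega f\, d(\pi^1_\#\gamma|_\Omega),
\]
where the leftmost equality uses $\gamma_n \in \Adm(\mu,\nu)$. Since the left-hand side is independent of $n$, this produces the required marginal identity, and the symmetric argument handles $\pi^2$. The main obstacle throughout is precisely the passage from vague to weak convergence of the restrictions $(\gamma_n)_C^X$, which is exactly the gap that lemma \ref{lem:weak convergence of minimising sequence} was designed to close; once that upgrade is available, the remainder of the proof is bookkeeping.
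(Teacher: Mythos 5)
Your proof is correct and follows essentially the same route as the paper's: use lemma~\ref{lem:weak convergence of minimising sequence} to upgrade vague convergence to weak convergence of the restricted plans $(\gamma_n)_C^X$, test the first marginal against $f\circ\pi^1\in\calC_b$, and obtain $C(\gamma)\le\Wb_p^p(\mu,\nu)+\eps$ by vague lower semicontinuity of the cost together with monotone convergence (your monotone $\calC_c$-approximation of $d^p$ is an equivalent phrasing of the paper's appeal to lemma~\ref{lem:vague-convergence}). You helpfully make explicit a step the paper leaves implicit --- identifying a subsequential weak limit $\eta$ of $(\gamma_n)_C^X$ with $\gamma_C^X$ --- but note one small inaccuracy: a continuous $g$ on $E_\Omega$ with $\supp g\subset C\times X$ automatically has support inside $\Int(C)\times X$, so such $g$ separate Radon measures only on $\Int(C)\times X$, not on all of $C\times X$ (they cannot detect mass on $\partial C\times X$); this is harmless because $C=\supp f$ and $f\circ\pi^1$ vanishes off $\{f\neq 0\}\times X\subset\Int(C)\times X$, but the separation claim as written is slightly too strong, and the final full-sequence upgrade is in any case unnecessary since $\int f\circ\pi^1\,d\gamma_n=\int f\,d\mu$ is constant in $n$.
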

\begin{proof}
    Let $\{\gamma_k\}_{k\in\NN}\subset \Opt_\eps(\mu,\nu)$ be such that $\gamma_k\vto \gamma$ for some $\gamma\in \calM(E_\Omega)$. We first observe that $\gamma\in\Adm(\mu,\nu)$. Indeed, if $f\in\calC_c(\Omega)$ then, by applying lemma \ref{lem:weak convergence of minimising sequence} with $C=\supp(f)$, we can assume that, up to passing to a subsequence, $\left\{(\gamma_k)_{\supp(f)}^{X}\right\}_{k\in \NN}$ is weakly convergent to $\gamma_{\supp(f)}^{X}$. Therefore,
    \begin{equation}\label{eq:equations of divol-lacombe}
        \int_{\Omega} f\ d\pi^1_\#\gamma = \int_{\Omega\times X} f\circ\pi^1\ d\gamma=\lim_{k\to \infty} \int_{\Omega\times X} f \circ \pi^1\ d\gamma_k=\int_{\Omega} f\ d\mu,
    \end{equation}
    where the second equality follows from the fact that $f\circ \pi^1\in\calC_b(\Omega\times X)$. This implies that $\pi^1_\#\gamma|_{\Omega} = \mu$, and analogously we obtain $\pi^2_\#\gamma|_{\Omega} = \nu$.
    
    Now, we prove that $\gamma$ satisfies \eqref{eq:gamma-eps}. Indeed, by lemma \ref{lem:vague-convergence} applied to the sequence $\{d(\cdot,\cdot)^p \gamma_{k}\}_{k\in\NN}$, which is vaguely convergent to $d(\cdot,\cdot)^p\gamma$, and any bounded open set $U\subset E_\Omega$, we get
    \[
    C(\gamma|_U) \leq \liminf_{k\to \infty} C((\gamma_{k})|_U)\leq \Wb_p^p(\mu,\nu)+\eps.
    \]
    By the monotone convergence theorem, 
    \[
    C(\gamma)\leq \Wb_p^p(\mu,\nu)+\eps,
    \]
    and the claim follows.
\end{proof}

\begin{proof}[Proof of theorem \ref{thm:existence-of-optimal-partial-transport}]
Let $\mu,\nu\in\calM_p(X,A)$ and, for every $k\in\NN$, let $\gamma_k\in\Opt_{1/k}(\mu,\nu)$, that is $\gamma_k\in \Adm(\mu,\nu)$ and
\begin{equation}\label{eq:minimisers-opt}
    \Wb_p^p(\mu,\nu) \leq C(\gamma_k)\leq \Wb_p^p(\mu,\nu)+1/k.
\end{equation}
Remark \ref{rem:partial transport plans are radon} yields that $\{\gamma_k\}_{k\in\NN}$ is a vaguely relatively compact subset of $\calM(E_\Omega)$. Consequently, up to passing to a subsequence, we can assume that there exists $\gamma\in\calM(E_\Omega)$ such that $\gamma_k\vto \gamma$. Corollary \ref{lem:gamma-eps is closed} implies that $\gamma\in \Opt_{1/k}(\mu,\nu)$ for any $k\in\NN$, which means that
\[
\Wb_p^p(\mu,\nu)\leq C(\gamma)\leq\Wb_p^p(\mu,\nu)+\frac{1}{k}
\]
for all $k\in\NN$. Therefore, $\gamma\in\Opt(\mu,\nu)$.

Regarding the second part of the theorem, observe that if $\{\gamma_k\}_{k\in\NN}\subset \Opt(\mu,\nu)$ then, by the previous arguments, up to passing to a subsequence, $\gamma_k\vto \gamma$ for some $\gamma\in\calM(E_\Omega)$. The fact that $\gamma\in\Opt(\mu,\nu)$ also follows from the arguments above.
\end{proof}

We now prove that $\Wb_p$ is a metric and that it is vaguely lower semi-continuous. This statement, and its proof, are adaptations of \cite[Theorem 2.2]{FG10} to the setting of proper metric spaces.

\begin{thm}\label{thm:wb-basic-properties}
Let $(X,A)$ be a metric pair and $p\in [1,\infty)$. Then the function $\Wb_p$ is a metric on $\calM_p(X,A)$. Moreover, $\Wb_p$ is lower semi-continuous with respect to the vague topology.
\end{thm}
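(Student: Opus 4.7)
The plan is to verify the four metric axioms for $\Wb_p$ and then establish vague lower semi-continuity. Three of the four axioms are handled by explicit constructions. Symmetry follows from the fact that the swap involution $s(x,y)=(y,x)$ preserves $E_\Omega$ (since $A\times A$ is $s$-invariant), maps $\Adm(\mu,\nu)$ onto $\Adm(\nu,\mu)$, and preserves the cost. For $\Wb_p(\mu,\mu)=0$, the diagonal plan $(\id,\id)_\#\mu$ is supported in $\Delta(\Omega\times\Omega)\subset E_\Omega$ and has zero cost. Conversely, if $\Wb_p(\mu,\nu)=0$, then theorem \ref{thm:existence-of-optimal-partial-transport} provides $\gamma\in\Opt(\mu,\nu)$ with $C(\gamma)=0$, so $\supp(\gamma)\subset E_\Omega\cap\{x=y\}=\Delta(\Omega\times\Omega)$; the marginal identities then force $\mu=\pi^1_\#\gamma|_\Omega=\pi^2_\#\gamma|_\Omega=\nu$.

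For the triangle inequality, given $\mu^1,\mu^2,\mu^3\in\calM_p(X,A)$, pick $\gamma^{12}\in\Opt(\mu^1,\mu^2)$ and $\gamma^{23}\in\Opt(\mu^2,\mu^3)$ and apply lemma \ref{lem:gluing} to obtain $\gamma^{123}$ on $X^3$ together with the diagonal error terms $\sigma^{12},\sigma^{23}$ supported on $\Delta(A\times A)$. Set $\gamma^{13}=(\pi^{13}_\#\gamma^{123})|_{E_\Omega}$. Since $\sigma^{12}$ and $\sigma^{23}$ project onto $A$, they contribute nothing to the marginals restricted to $\Omega$, and a direct computation gives $\gamma^{13}\in\Adm(\mu^1,\mu^3)$. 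Applying Minkowski's inequality in $L^p(\gamma^{123})$ to $d(x,z)\leq d(x,y)+d(y,z)$, and noting that $d=0$ on $\supp(\sigma^{12})\cup\supp(\sigma^{23})$, yields
\[
C(\gamma^{13})^{1/p}\leq \left(\int d(x,y)^p\,d\gamma^{12}\right)^{1/p}+\left(\int d(y,z)^p\,d\gamma^{23}\right)^{1/p}=\Wb_p(\mu^1,\mu^2)+\Wb_p(\mu^2,\mu^3).
\]

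For vague lower semi-continuity, let $\mu_n\vto\mu$ and $\nu_n\vto\nu$ in $\calM_p(X,A)$, and assume after passing to a subsequence that $L:=\lim_n\Wb_p(\mu_n,\nu_n)$ is finite. Pick $\gamma_n\in\Opt(\mu_n,\nu_n)$. The vague upper semi-continuity on bounded closed sets from lemma \ref{lem:vague-convergence} bounds $\mu_n,\nu_n$ uniformly on compacts, which combined with remark \ref{rem:partial transport plans are radon} and lemma \ref{lem:vague-relative-compactness} shows that $\{\gamma_n\}$ is vaguely relatively compact; pass to a further subsequence with $\gamma_n\vto\gamma\in\calM(E_\Omega)$. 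The cost bound $C(\gamma)\leq L^p$ then follows from lemma \ref{lem:vague-convergence} applied to the vaguely convergent sequence $d(\cdot,\cdot)^p\gamma_n$ on a bounded open exhaustion of $E_\Omega$, together with monotone convergence.

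The main obstacle is showing $\gamma\in\Adm(\mu,\nu)$: for $f\in\calC_c(\Omega)$ the function $f\circ\pi^1$ is not compactly supported on $E_\Omega$ when $X$ is non-compact, so vague convergence of $\gamma_n$ does not pass directly to the marginals. The plan is to recycle the tightness argument of lemma \ref{lem:weak convergence of minimising sequence}: since its inequalities depend only on the uniform cost bound $C(\gamma_n)\leq L^p+o(1)$ and the compactness of $\supp(f)\subset\Omega$, the restrictions $\gamma_n|_{\supp(f)\times X}$ are tight and of uniformly bounded mass, hence (up to a further subsequence) weakly convergent by lemma \ref{lem:prokhorov}. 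Testing against compactly supported continuous functions on $E_\Omega$ with support in $\supp(f)\times X$, where vague and weak convergence must agree, identifies the weak limit with $\gamma|_{\supp(f)\times X}$ and allows passage to the limit in $\int f\circ\pi^1\,d\gamma_n=\int_\Omega f\,d\mu_n$, yielding $\pi^1_\#\gamma|_\Omega=\mu$; the second marginal is handled symmetrically. Combining the admissibility of $\gamma$ with the cost bound gives $\Wb_p(\mu,\nu)\leq L$, as required.
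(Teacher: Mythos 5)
Your proposal is correct and follows essentially the same route as the paper: symmetry and the identity axiom by explicit plans, the triangle inequality via Lemma~\ref{lem:gluing} and Minkowski's inequality in $L^p(\gamma^{123})$, and lower semi-continuity by extracting a vague limit of optimal plans, using the tightness argument of Lemma~\ref{lem:weak convergence of minimising sequence} to upgrade to weak convergence of the restrictions $(\gamma_n)_C^X$ so that the marginal conditions pass to the limit, then bounding the cost via Lemma~\ref{lem:vague-convergence} and monotone convergence. You correctly identify the key subtlety (that $f\circ\pi^1$ is not compactly supported on $E_\Omega$) and resolve it exactly as the paper does.
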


\begin{proof}
Let $\mu,\nu\in\calM_p(X,A)$. Then the measure
\[
\gamma = (\id,\proj_A)_\# \mu + (\proj_A,\id)_\# \nu
\]
is in $\Adm(\mu,\nu)$, and it satisfies 
\[
C(\gamma) = \int_\Omega d(x,A)^p\ d\mu(x) + \int_\Omega d(y,A)^p\ d\nu(y) < \infty.
\]
It is also clear that $\Wb_p$ is non-negative and symmetric. Moreover, due to theorem \ref{thm:existence-of-optimal-partial-transport}, $\Wb_p(\mu,\nu)=0$ if and only if $C(\gamma)=0$ for some $\gamma\in \Adm(\mu,\nu)$, which is equivalent to
\[
\int_{E_\Omega} d(x,y)^p\ d\gamma(x,y) = 0.
\]
However, this is the same as $\supp(\gamma) \subset \Delta(E_\Omega)$, which in turn is equivalent to $\pi^1_\#\gamma=\pi^2_\#\gamma$. The latter implies that $\mu=\nu$. Conversely, if $\mu=\nu$ then we can take $\gamma=(\id,\id)_\#\mu\in \Adm(\mu,\nu)$, which satisfies $C(\gamma)=0$.

For the triangle inequality, we need lemma \ref{lem:gluing}. Indeed, let $\mu^1,\mu^2,\mu^3\in\calM_p(X,A)$ and choose $\gamma^{12}\in\Opt(\mu^1,\mu^2)$ and $\gamma^{23}\in\Opt(\mu^2,\mu^3)$ (which we can do thanks to proposition \ref{thm:existence-of-optimal-partial-transport}). By lemma \ref{lem:gluing}, there is a measure $\gamma^{123}$ on $X\times X\times X$ such that 
\begin{align*}
\pi^{12}_\#\gamma^{123}=\gamma^{12}+\sigma^{12},\\ \pi^{23}_\#\gamma^{123}=\gamma^{23}+\sigma^{23},
\end{align*}
with $\supp(\sigma^{12}),\ \supp(\sigma^{23})\subset \Delta(A\times A)$. In particular, $\pi^1_\#\gamma^{123}|_\Omega = \mu^1$ and $\pi^3_\#\gamma^{123}|_\Omega = \mu^3$, therefore $\pi^{13}_\#\gamma^{123}|_{E_\Omega}\in \Adm(\mu^1,\mu^3)$. This implies
\begin{align*}
    \Wb_p(\mu^1,\mu^3) &\leq C(\pi^{13}_\#\gamma^{123}|_{E_\Omega})^{1/p}\\
    &\leq \left\|d\circ \pi^{13}\right\|_{L^p(\gamma^{123})}\\
    &\leq \left\|d\circ \pi^{12}+d\circ \pi^{23}\right\|_{L^p(\gamma^{123})}\\
    &\leq \left\|d\circ \pi^{12}\right\|_{L^p(\gamma^{123})}+\left\|d\circ \pi^{23}\right\|_{L^p(\gamma^{123})}\\
    &= \left\|d\right\|_{L^p(\gamma^{12}+\sigma^{12})}+ \left\|d\right\|_{L^p(\gamma^{23}+\sigma^{23})}\\
    &= C(\gamma^{12})^{1/p} + C(\gamma^{23})^{1/p}\\
    &= \Wb_p(\mu^1,\mu^2)+\Wb_p(\mu^2,\mu^3).
\end{align*}

To prove that $\Wb_p$ is lower semi-continuous with respect to the vague topology, let $\mu_n\vto\mu$ and $\nu_n\vto \nu$. If $\liminf_{n\to \infty} \Wb_p(\mu_n,\nu_n)=\infty$, the result follows trivially. Otherwise, up to passing to a subsequence, we can assume that
\[
\liminf_{n\to \infty} \Wb_p(\mu_n,\nu_n) = \lim_{n\to \infty} \Wb_p(\mu_n,\nu_n).
\]
For each $n\in\NN$, take $\gamma_n\in\Opt(\mu_n,\nu_n)$. By similar arguments to those in the proof of proposition \ref{thm:existence-of-optimal-partial-transport}, $\{\gamma_n\}_{n\in \NN}$ is vaguely precompact, therefore, up to passing to another subsequence, we can assume that $\gamma_n\vto \gamma$ for some $\gamma\in\calM(E_\Omega)$. Moreover, we can also prove that for any compact $C\subset \Omega$, $\{(\gamma_n)_C^X\}_{n\in\NN}$ is tight and each $(\gamma_n)_C^X$ has total mass bounded above by $\mu(C)$ due to lemma \ref{lem:vague-convergence}. Therefore, $\{(\gamma_n)_C^X\}_{n\in\NN}$ is weakly convergent to $\gamma_C^X$. It follows that $\pi^1_\#\gamma|_{\Omega} = \mu$, and analogously $\pi^2_\#\gamma|_{\Omega} = \nu$. Thus $\gamma\in \Adm(\mu,\nu)$ and due to lemma \ref{lem:vague-convergence} applied to the measures $\{d(\cdot,\cdot)^p\gamma_n\}_{n\in\NN}$, which vaguely converge to $d(\cdot,\cdot)^p\gamma$, and bounded open sets $U\subset X\times X$, we get
\[
C(\gamma_U^U)\leq \liminf_{n\to\infty} C((\gamma_n)_U^U)\leq \lim_{n\to \infty} \Wb_p^p(\mu_n,\nu_n)
\]
and by the monotone convergence theorem, the claim follows.
\end{proof}

\section{Characterisation of optimal partial transport}

We now move on to study a characterisation of optimal partial transport plans analogous to theorem \ref{thm:classical criteria optimal transport}. Let us define 
\begin{align}
c(x,y) &= d(x,y)^p, \label{eq:cost function c}\\
\widetilde c(x,y) &= \min\{d(x,y)^p,d(x,A)^p+d(y,A)^p\}, \label{eq:cost function c tilde}
\end{align} 
and 
\begin{equation}\label{eq:set S}
\calS =\{(x,y)\in X\times X: \widetilde c(x,y) = c(x,y)\}.
\end{equation}

The following proposition and its proof are adaptations of \cite[Proposition 2.3]{FG10} for proper metric spaces.

\begin{prop}\label{prop:criteria for optimality}
    Let $\gamma\in \calM(E_\Omega)$ be a measure satisfying 
    \[
    \int d(x,A)^p+d(y,A)^p\ d\gamma(x,y) <\infty.
    \]
    Then the following are equivalent:
    \begin{enumerate}
        \item $\gamma\in \Opt(\pi^1_\#\gamma|_\Omega,\pi^2_\#\gamma|_\Omega)$.
        \item\label{item:optimal plans are concentrated on S} $\gamma$ is concentrated on $\calS$ and the set $\supp(\gamma)\cup A\times A$ is $\widetilde c$-cyclically monotone.
        \item there is a $c$-concave function $\phi$ such that both $\phi$ and $\phi^c$ vanish on $A$ and $\supp(\gamma)\subset \partial^c_+\phi$.
    \end{enumerate}
    Moreover, $d(x,y) = d(x,A)$ for $\gamma_\Omega^A$-a.e. $(x,y)$ and $d(x,y) = d(y,A)$ for $\gamma_A^\Omega$-a.e. $(x,y)$, whenever $\gamma$ is optimal.
\end{prop}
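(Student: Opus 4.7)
The plan is to prove the cyclic chain of implications (1)$\Rightarrow$(2)$\Rightarrow$(3)$\Rightarrow$(1), with the ``moreover'' statement following directly from (2). The unifying idea is to treat optimal partial transport via the modified cost $\widetilde{c}$, which encodes the option of routing mass through the ``reservoir'' $A$; this is the standard Figalli--Gigli strategy, adapted here to proper metric pairs using lemma \ref{lem:projection onto A} and the gluing lemma \ref{lem:gluing}.

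For (1)$\Rightarrow$(2), I would proceed in two steps. First, to show that $\gamma$ is concentrated on $\calS$, suppose $\gamma(E_\Omega\setminus \calS)>0$ and build a competitor plan by replacing $\gamma|_{E_\Omega\setminus \calS}$ with
\[
(\pi^1,\proj_A\circ \pi^2)_\#\gamma|_{E_\Omega\setminus \calS} + (\proj_A\circ \pi^1,\pi^2)_\#\gamma|_{E_\Omega\setminus \calS}.
\]
Lemma \ref{lem:projection onto A} provides the required measurability, a direct computation checks that the marginals on $\Omega$ are unchanged, and on $E_\Omega\setminus \calS$ the cost decreases strictly because $d(x,y)^p>d(x,A)^p+d(y,A)^p$ there, contradicting optimality. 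Second, to obtain $\widetilde{c}$-cyclical monotonicity of $\supp(\gamma)\cup A\times A$, given a finite cycle $\{(x_i,y_i)\}_{i=1}^n$ with $\sum \widetilde{c}(x_i,y_{\sigma(i)})<\sum c(x_i,y_i)$, construct a local perturbation of $\gamma$ by disintegration and lemma \ref{lem:gluing}, exchanging small amounts of mass along the cycle and inserting detours through $A$ whenever the branch $\widetilde{c}=d(\cdot,A)^p+d(\cdot,A)^p$ is active. This produces an admissible plan of strictly smaller cost, contradicting (1).

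For (2)$\Rightarrow$(3), I would use a Rockafellar-type construction adapted to $\widetilde{c}$. Fixing a base point $a_0\in A$, define
\[
\phi(x)=\inf\left\{\sum_{i=0}^{n-1}\bigl[\widetilde{c}(x_{i+1},y_i)-\widetilde{c}(x_i,y_i)\bigr]\right\},
\]
where the infimum runs over finite chains $\{(x_i,y_i)\}_{i=0}^{n}\subset \supp(\gamma)\cup A\times A$ with $x_0=y_0=a_0$ and $x_{n+1}=x$. The $\widetilde{c}$-cyclical monotonicity ensures $\phi(a_0)=0$ and that $\phi$ is well-defined and $\widetilde{c}$-concave; including $A\times A$ in the monotone set and using that $\widetilde{c}(a,a_0)=0$ for $a\in A$ forces $\phi\equiv 0$ on $A$, and a symmetric argument yields $\phi^c\equiv 0$ on $A$. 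Because $\widetilde{c}=c$ on $\calS\supset \supp(\gamma)$, one then checks that $\phi$ is $c$-concave and $\supp(\gamma)\subset \partial^c_+\phi$. For (3)$\Rightarrow$(1), I would use the duality bound: for any $\gamma'\in \Adm(\mu,\nu)$ of finite cost,
\[
C(\gamma')\geq \int_{E_\Omega}\bigl[\phi(x)+\phi^c(y)\bigr]\,d\gamma'(x,y),
\]
and since $\phi$ and $\phi^c$ vanish on $A$ the contributions from $A\times \Omega$ and $\Omega\times A$ collapse, leaving the right-hand side equal to $\int_\Omega \phi\, d\mu+\int_\Omega \phi^c\, d\nu$. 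On $\gamma$ itself, $\supp(\gamma)\subset \partial^c_+\phi$ turns the inequality into an equality, so $C(\gamma)\leq C(\gamma')$.

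The ``moreover'' statement is immediate from (2): for $\gamma_\Omega^A$-a.e.\ $(x,y)$, one has $y\in A$, hence $d(y,A)=0$, and $(x,y)\in \calS$ gives $d(x,y)^p\leq d(x,A)^p$; combined with the trivial $d(x,y)\geq d(x,A)$, this forces $d(x,y)=d(x,A)$, and the case of $\gamma_A^\Omega$ is symmetric. The main obstacle is step (2)$\Rightarrow$(3): one must simultaneously produce a function that is $c$-concave \emph{and} whose $c$-transform vanishes on $A$, starting from $\widetilde{c}$-cyclical monotonicity. Passing carefully between $\widetilde{c}$- and $c$-concavity via the identity $\widetilde{c}=c$ on $\calS$ is delicate, and one must verify sufficient integrability of $\phi$ against $\mu$ for the duality argument to apply — this relies crucially on the hypothesis $\int d(x,A)^p+d(y,A)^p\, d\gamma<\infty$ together with the pointwise bound $\phi(x)\leq d(x,a_0)^p$ coming from $c$-concavity.
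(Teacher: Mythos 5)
Your competitor plan in the proof of (1)$\Rightarrow$(2) is misconstructed: you wrote $(\pi^1,\proj_A\circ \pi^2)_\#\gamma|_{E_\Omega\setminus \calS} + (\proj_A\circ \pi^1,\pi^2)_\#\gamma|_{E_\Omega\setminus \calS}$, which pairs $x$ with the projection of $y$ onto $A$ (and vice versa). The cost of that plan is $\int\bigl[d(x,\proj_A(y))^p+d(\proj_A(x),y)^p\bigr]\,d\gamma$, which has no useful comparison with $d(x,A)^p+d(y,A)^p$: in general $d(x,\proj_A(y))\geq d(x,A)$ but can be arbitrarily larger. The correct maps are $(\pi^1,\proj_A\circ \pi^1)$ and $(\proj_A\circ \pi^2,\pi^2)$, so that each point is sent to \emph{its own} nearest point of $A$; this gives exactly cost $d(x,A)^p+d(y,A)^p$ on $E_\Omega\setminus\calS$ and makes the strict inequality $d(x,y)^p>d(x,A)^p+d(y,A)^p$ deliver the contradiction. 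As written, the first step of your argument fails.

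Beyond that fixable slip, two steps are genuinely under-justified. First, obtaining $\widetilde c$-cyclical monotonicity of $\supp(\gamma)\cup A\times A$ (not merely of $\supp(\gamma)$) is the crux of (1)$\Rightarrow$(2), and a ``local perturbation'' sketch does not settle it: when a cycle passes through $A\times A$ there is no mass of $\gamma$ there to rearrange. The paper handles this by first showing $\gamma$ is optimal for the cost $\widetilde c$ between the full marginals $\pi^1_\#\gamma$, $\pi^2_\#\gamma$, invoking the classical criterion (theorem~\ref{thm:classical criteria optimal transport}) to get $\widetilde c$-monotonicity of $\supp(\gamma)$, and then running an explicit permutation argument (using $\widetilde c\equiv 0$ on $A\times A$ and the inequality $\widetilde c(x,y)+\widetilde c(z,y)\geq\widetilde c(x,z)$ for $y\in A$) to extend monotonicity to $\supp(\gamma)\cup A\times A$. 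Second, in (2)$\Rightarrow$(3), the passage from a $\widetilde c$-concave potential to a $c$-concave one is not automatic from ``$\widetilde c=c$ on $\calS$'': the paper proves it by observing that each $x\mapsto \widetilde c(x,y)$ is itself $c$-concave (with an explicit generating function $\psi_y$), and then exchanging infima to rewrite $\phi=\psi^{\widetilde c}$ as $\phi=\eta^{c}$. Your Rockafellar-style definition could in principle be carried through, but you would still need to perform essentially this exchange-of-infima computation rather than merely asserting $c$-concavity; without it the step is a gap. Your treatments of (3)$\Rightarrow$(1) and of the ``moreover'' clause match the paper.
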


\begin{proof}
We start proving that (1) implies (2). We denote $\overline{\mu} = \pi^1_\#\gamma$, $\overline{\nu} = \pi^2_\#\gamma$, $\mu = \overline{\mu}|_\Omega$ and $\nu = \overline{\nu}|_\Omega$, and define
\[
\widetilde\gamma = \gamma|_{E_\Omega\cap \calS}+ (\pi^1,\proj_A\circ \pi^1)_\#\gamma|_{E_\Omega\setminus\calS} +(\proj_A\circ \pi^2,\pi^2)_\#\gamma|_{E_\Omega\setminus\calS}.
\]
It is clear that $\widetilde\gamma\in \Adm(\mu,\nu)$ and 
\[
C(\widetilde\gamma) = \int_{E_\Omega\cap\calS} d(x,y)^p\ d\gamma(x,y) + \int_{E_\Omega\setminus \calS} d(x,A)^p+d(y,A)^p\ d\gamma(x,y) \leq C(\gamma)
\]
with strict inequality if and only if $\gamma(E_\Omega\setminus\calS)>0$. Since $\gamma \in \Opt(\mu,\nu)$, we get that $C(\widetilde\gamma) = C(\gamma)$, which implies that $\gamma(E_\Omega\setminus\calS) = 0$, i.e.\ $\gamma$ is concentrated on $\calS$. In particular,
\[
C(\gamma) = \int_{E_\Omega} \widetilde c(x,y)\ d\gamma(x,y),
\]
where $\widetilde c$ is given by \eqref{eq:cost function c tilde}. Now suppose that a measure $\eta$ on $X\times X$ satisfies $\pi^1_\#\eta=\overline{\mu}$ and $\pi^2_\#\eta = \overline\nu$. Then we can define $\widetilde\eta$ in analogy to how we defined $\widetilde\gamma$. Clearly $\widetilde \eta\in \Adm(\mu,\nu)$, and 
\[
C(\widetilde\eta) = \int_{E_\Omega} \widetilde c(x,y)\ d\eta(x,y).
\]
In particular, 
\[
\int_{X\times X} \widetilde c(x,y)\ d\gamma(x,y) = C(\gamma) \leq C(\widetilde\eta) = \int_{X\times X} \widetilde c(x,y)\ d\eta(x,y)
\]
for any $\eta$ with the same marginals as $\gamma$. In other words, $\gamma$ is an optimal plan between $\overline{\mu}$ and $\overline{\nu}$ with respect to the cost function $\widetilde c$ in the usual sense. Theorem \ref{thm:classical criteria optimal transport} implies that $\supp(\gamma)$ is $\widetilde c$-cyclically monotone. Moreover, given $\{(x_i,y_i)\}_{i=1}^{n}\subset \supp(\gamma)\cup A\times A$ and $\sigma\in \Sigma_n$, we have 
\begin{align*}
\sum_{i=1}^{n}\widetilde c(x_i,y_{\sigma(i)}) 
&= 
\sum_{\substack{(x_i,y_i)\in \supp(\gamma)\\(x_{\sigma(i)},y_{\sigma(i)})\in \supp(\gamma)}} \widetilde c(x_i,y_{\sigma(i)})
+
\sum_{\substack{(x_i,y_i)\in \supp(\gamma)\\(x_{\sigma(i)},y_{\sigma(i)})\in A\times A\setminus\supp(\gamma)}} \widetilde c(x_i,y_{\sigma(i)})\\
&\quad 
+
\sum_{\substack{(x_i,y_i)\in A\times A\setminus\supp(\gamma)\\(x_{\sigma(i)},y_{\sigma(i)})\in \supp(\gamma)}} \widetilde c(x_i,y_{\sigma(i)})
+
\sum_{\substack{(x_i,y_i)\in A\times A\setminus\supp(\gamma)\\(x_{\sigma(i)},y_{\sigma(i)})\in A\times A\setminus\supp(\gamma)}} \widetilde c(x_i,y_{\sigma(i)})
\end{align*}
where it is clear that the number of summands in the second and the third summations are the same, whereas the fourth summation vanishes since $\widetilde c(x,y) = 0$ for any $(x,y)\in A\times A$. Let $\{j_i\}_{i=1}^{p}$, $\{k_i\}_{i=1}^{q}$ and $\{l_i\}_{i=1}^{q}$ be the sets of indices for the first, second and third sums, respectively, and define a permutation $\widetilde\sigma$ of the indices $\{j_1,\dots,j_p,k_1,\dots,k_q\}=\{i:(x_i,y_i)\in \supp(\gamma)\}$ by $\widetilde\sigma(j_r) = \sigma(j_r)$ for $r=1,\dots, p$ and $\widetilde\sigma(k_s) = \sigma(l_s)$ for $s=1,\dots, q$. Then
\begin{align*}
\sum_{i=1}^{n}\widetilde c(x_i,y_{\sigma(i)}) &= \sum_{i=1}^{p} \widetilde c(x_{j_i},y_{\sigma(j_i)})+ \sum_{i=1}^{q} \widetilde c(x_{k_i},y_{\sigma(k_i)}) + \widetilde c(x_{l_i},y_{\sigma(l_i)})\\
&=\sum_{i=1}^{p} \widetilde c(x_{j_i},y_{\sigma(j_i)})+ \sum_{i=1}^{q} \widetilde c(x_{k_i},y_{\sigma(k_i)}) + \widetilde c(y_{\sigma(k_i)},x_{l_i})+\widetilde c(x_{l_i},y_{\sigma(l_i)})\\
&\geq \sum_{i=1}^{p} \widetilde c(x_{j_i},y_{\sigma(j_i)})+ \sum_{i=1}^{q} \widetilde c(x_{k_i},y_{\sigma(l_i)})\\
&=\sum_{(x_i,y_i)\in \supp(\gamma)} \widetilde c(x_i,y_{\widetilde\sigma(i)})\\
&\geq \sum_{(x_i,y_i)\in \supp(\gamma)} \widetilde c(x_i,y_i)\\
&=\sum_{i=1}^{n} \widetilde c(x_i,y_i)
\end{align*}
where in the second and last equalities we have used the fact that $\widetilde c(x,y) = 0$ for any $(x,y)\in A\times A$, whereas for the first and second inequalities we have used the easily checked inequality $\widetilde c (x,y) + \widetilde c(z,y) \geq \widetilde c(x,z)$ that holds for any $x,z\in X$ and $y\in A$, and the fact that $\supp(\gamma)$ is $\widetilde c$-cyclically monotone. This shows (2).

Now, to prove that (2) implies (3), observe that, by theorem \ref{thm:classical criteria optimal transport}, the $\widetilde c$-cyclical monotonicity of $\supp(\gamma)\cup A\times A$ implies that there exists a $\widetilde c$-concave function, say $\phi$, such that $\supp(\gamma)\cup A\times A\subset \partial^{\widetilde c}_+\phi$. In particular, 
\[
\phi(x) + \phi^{\widetilde c}(y) = \widetilde c(x,y) = 0
\]
for any $(x,y)\in A\times A$, which implies that both $\phi$ and $\phi^{\widetilde c}$ are constant on $A$. Since the $\widetilde c$-concavity is invariant under addition of constants, we can assume that $\phi=\phi^{\widetilde c}=0$ on $A$.

On the other hand, we can see that $\phi$ is $c$-concave. Indeed, one can prove that, for any $y\in X$, the map $x\mapsto \widetilde c(x,y)$ is $c$-concave. Indeed, if we define $\psi_y\colon X\to \RR\cup\{-\infty\}$ by
\[
\psi_y(z) = 
\begin{cases}
-\infty & \text{if}\ z\in X\setminus (A\cup\{y\}) \\
0 & \text{if}\ z=y \\
-d(y,A)^p& \text{if}\ z\in A,
\end{cases}
\]
then it is clear that
\[
\inf_{z\in X} c(x,z)-\psi_y(z) = \min\left\{d(x,y)^p,\inf_{z\in A} d(x,z)^p+d(y,A)^p\right\}= \widetilde c(x,y),
\]
and since $\phi$ is $\widetilde c$-concave, there exists a function $\psi\colon X\to \RR\cup\{-\infty\}$ such that $\phi = \psi^{\widetilde c}$, that is,
\begin{align*}
\phi(x) &= \inf_{y\in X} \widetilde c(x,y) - \psi(y)\\
&= \inf_{y\in X} \inf_{z\in X} c(x,z) - \psi_y(z) - \psi(y)\\
&= \inf_{z\in X} \inf_{y\in X} c(x,z) - \psi_y(z)-\psi(y)\\
&= \inf_{z\in X} c(x,z) - \sup_{y\in X} \psi_y(z) + \psi(y)\\
&= \inf_{z\in X} c(x,z) - \eta(z),
\end{align*}
where $\eta(z) = \sup_{y\in X} \psi_y(z)+\psi(y)$. Observe this is a well defined function $X\to \RR\cup\{-\infty\}$ since, if $z\in A$ then 
\[
\sup_{y\in X}\psi_y(z)+\psi(y) = \sup_{y\in X}-d(y,A)^p+\psi(y) = -\inf_{y\in X}\widetilde c(z,y)-\psi(y)= -\phi(z) = 0
\]
and if $z\not\in A$, then
\[
\psi_y(z)+\psi(y) = 
\begin{cases}
    -\infty & \text{if}\ y\neq z\\
    \psi(z) & \text{if}\ y=z
\end{cases}
\]    
which implies that
$\sup_{y\in X} \psi_y(z) + \psi(y) = \psi(z)$.

Moreover, if $(x,y)\in \partial^{\widetilde c}_+\phi\cap\calS$ then 
\[
\phi^{\widetilde c}(y) = \widetilde c(x,y) - \phi(x) = c(x,y) - \phi(x) 
\]
whereas 
\[
\phi^{\widetilde c}(y) \leq \widetilde c(x',y) - \phi(x') \leq c(x',y) - \phi(x')
\]
for any $x'\in X$, which means that $\phi^{\widetilde c}(y) = \phi^c(y)$, therefore $(x,y)\in \partial^c_+ \phi$. In particular, since $\gamma$ is concentrated on $\partial^{\widetilde c}_+\phi\cap\calS$, we get that it is also concentrated on $\partial^c_+\phi$, which implies that $\supp(\gamma) \subset \partial^c_+\phi$.

On the other hand, if $x\in A$ then, since $\phi = \phi^{\widetilde c} = 0$ on $A$, we get
\[
\phi(x)+\phi^{\widetilde c}(x) = \widetilde c(x,x) = c(x,x) = 0
\]
which implies that $(x,x) \in \partial^{\widetilde c}_+\phi\cap\calS$, therefore $(x,x)\in \partial^c_+\phi$. In particular,
\[
\phi^c(x) = \phi(x) + \phi^c(x) = c(x,x) = 0
\]
therefore $\phi^c =0$ on $A$. This proves (3).

Finally, to prove that (3) implies (1), consider $\phi$ a $c$
-concave function such that $\supp(\gamma) \subset \partial^c_+\phi$ and $\phi=\phi^c = 0$ on $A$, and choose $\widetilde \gamma\in \Adm(\mu,\nu)$. Then, since $\pi^1_\#\gamma|_\Omega = \pi^1_\#\widetilde \gamma|_\Omega$, we get
\[
\int_X \phi\ d\pi^1_\#\gamma = \int_\Omega \phi\ d\pi^1_\#\gamma= \int_\Omega \phi\ d\pi^1_\#\widetilde\gamma= \int_X \phi\ d\pi^1_\#\widetilde \gamma.
\]
Analogously with $\phi^c$. Then, we get
\begin{align*}
\int_{E_\Omega} d(x,y)^p \ d\gamma(x,y) &= \int_{E_\Omega} \phi(x)+\phi^c(y)\ d\gamma(x,y)\\
&= \int_{X} \phi(x)\ d\pi^1_\#\gamma(x) + \int_{X} \phi^c(y)\ d\pi^2_\#\gamma(y)\\
&= \int_{X} \phi(x)\ d\pi^1_\#\widetilde\gamma(x) + \int_{X} \phi^c(y)\ d\pi^2_\#\widetilde\gamma(y)\\
&= \int_{E_\Omega} \phi(x) + \phi^c(y)\ d\widetilde\gamma(x,y)\\
&\leq \int_{E_\Omega} d(x,y)^p\ d\widetilde\gamma(x,y)
\end{align*}
where the last inequality is due to the inequality $\phi(x)+\phi^c(y)\leq c(x,y)$ that holds for general $(x,y)\in X\times X$. This argument implies that $\gamma \in \Opt(\mu,\nu)$.

To prove the last part of the statement, we only need to observe that, whenever $\gamma\in\Opt(\mu,\nu)$ for some $\mu,\nu\in \calM_p(X,A)$, then by (2) we have 
\[
\supp(\gamma_\Omega^A)\subset\supp(\gamma)\cap \Omega\times A\subset \calS\cap \Omega\times A
\]
and for any $(x,y)\in \calS\cap\Omega\times A$ we have
\[
d(x,A)^p\leq d(x,y)^p = \min\{d(x,y)^p,d(x,A)^p\}\leq d(x,A)^p
\]
which implies $d(x,y)=d(x,A)$ for any $(x,y)\in\supp(\gamma_\Omega^A)$. Analogously with $\gamma_A^\Omega$.
\end{proof}

\section{Properties of \texorpdfstring{$\calM_p(X,A)$}{the spaces of Radon measures}}
In this section we prove that $\calM_p(X,A)$ inherits different properties from the space $X$. Namely, we prove that $\calM_p(X,A)$ is complete, separable, geodesic and non-negatively curved in the Alexandrov sense whenever the underlying space $X$ is so. Proofs are similar to the corresponding results for the classical Wasserstein spaces of probability measures, and we include them for the sake of completeness.
\subsection{Completeness and separability}\label{sec:completeness and separability-OPT}
In this subsection we prove that $\calM_p(X,A)$ inherits the properties of completeness and separability from the underlying space $X$. This statement and its proof are adaptations of \cite[Proposition 2.7]{FG10} for proper metric spaces.
\begin{prop}\label{prop:competeness and separability}
The space $\calM_p(X,A)$ is complete and separable.    
\end{prop}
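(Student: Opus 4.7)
My plan is to prove separability and completeness separately, following the strategies familiar from classical Wasserstein spaces of probability measures, adapted to the partial-transport setting.

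For separability, the plan is to exhibit a countable $\Wb_p$-dense subset. Fix $x_0\in X$ and a countable dense subset $D\subset\Omega$, and set $K_n=\overline{B}_n(x_0)\cap\{x\in X:d(x,A)\ge 1/n\}$; these sets are compact by properness of $X$ and exhaust $\Omega$. Given $\mu\in\calM_p(X,A)$, the plan $(\id,\id)_\#\mu|_{K_n}+(\id,\proj_A)_\#\mu|_{\Omega\setminus K_n}$ lies in $\Adm(\mu,\mu|_{K_n})$ and has cost $\int_{\Omega\setminus K_n}d(x,A)^p\,d\mu$, which tends to $0$ by dominated convergence. Thus it is enough to approximate each $\mu|_{K_n}$, which is a finite measure with compact support, by members of a countable set. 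I would partition $K_n$ into Borel pieces $B_j$ of diameter less than $\varepsilon$, pick $x_j\in D$ close to $B_j$, and first consider $\sum_j\mu(B_j)\delta_{x_j}$; the transport sending mass in $B_j$ to $x_j$ gives a $\Wb_p$-cost bounded by $\varepsilon\,\mu(K_n)^{1/p}$. Finally, I would replace the weights $\mu(B_j)$ by non-negative rationals $q_j$, routing the discrepancy between $\sum_j\mu(B_j)\delta_{x_j}$ and $\sum_j q_j\delta_{x_j}$ to or from $A$ at cost bounded by $\sum_j|q_j-\mu(B_j)|\,d(x_j,A)^p$, which can be made arbitrarily small.

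For completeness, the plan is to combine vague precompactness with the lower semi-continuity of $\Wb_p$. Let $\{\mu_n\}\subset\calM_p(X,A)$ be Cauchy; by the triangle inequality the numbers $\Wb_p(\mu_n,0)$ are uniformly bounded by some $M$, and equation \eqref{eq:distance-to-zero} yields $\int d(x,A)^p\,d\mu_n\le M^p$. For any compact $K\subset\Omega$ one has $\delta:=d(K,A)>0$, whence $\mu_n(K)\le M^p/\delta^p$ uniformly in $n$. Lemma \ref{lem:vague-relative-compactness} then provides a subsequence $\mu_{n_k}\vto\mu$ for some Radon measure $\mu$ on $\Omega$. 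Applying the lower semi-continuity in theorem \ref{thm:wb-basic-properties} first to $\mu_{n_k}\vto\mu$ together with the constant zero measure gives $\Wb_p(\mu,0)\le M$, so $\mu\in\calM_p(X,A)$; applying it again with the constant $\mu_m$ in the first slot yields $\Wb_p(\mu_m,\mu)\le\liminf_k\Wb_p(\mu_m,\mu_{n_k})$, and the Cauchy condition forces this bound to tend to $0$ as $m\to\infty$, so $\mu_m\to\mu$ in $\Wb_p$.

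The step I expect to require the most care is the atomic approximation in the separability argument: one must simultaneously control the ``support shift'' error (governed by the partition diameters) and the ``weight rounding'' error (the discrepancies $|q_j-\mu(B_j)|$, absorbed via transport to or from $A$), while keeping every intermediate measure inside $\calM_p(X,A)$. The partial-transport feature, namely that missing or excess mass can always be created or destroyed at the boundary $A$ at a quantifiable cost, is precisely what makes this step possible and distinguishes the argument from the classical Wasserstein case.
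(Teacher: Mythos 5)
Your argument follows the paper's approach essentially verbatim: completeness via the uniform moment bound, vague precompactness (Lemma \ref{lem:vague-relative-compactness}), and the lower semi-continuity of $\Wb_p$ from Theorem \ref{thm:wb-basic-properties}; separability via a countable set of finitely supported atomic measures with rational weights at points of a countable dense subset of $\Omega$. The only difference is presentational: the paper defers the separability details to \cite[Theorem 6.18]{V09}, whereas you spell out the compact exhaustion, discretisation, and weight-rounding steps explicitly, correctly using the partial-transport mechanism of routing the weight discrepancy to $A$ in place of the renormalisation used in the probability-measure case.
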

\begin{proof}
For the separability of $\calM_p(X,A)$, if we choose a countable dense set $S\subset X$ and define,
\[
F = \left\{\sum_{i\in I} q_i \delta_{x_i}: x_i\in S\cap \Omega,\ q_i\in \QQ_+,\ I\subset \NN\ \text{is finite}\right\},
\]
it is easy to check that $F$ is countable and dense in $\calM_p(X,A)$, following the same argument in \cite[Theorem 6.18]{V09}.

To prove that $\calM_p(X,A)$ is complete, we consider a Cauchy sequence $\{\mu_n\}_{n\in\NN}\subset \calM_p(X,A)$. Since $\{\Wb_p(\mu_n,0)\}_{n\in \NN}$ is a bounded sequence in $\RR$, say by some $C>0$, then for any compact $K\subset \Omega$ we have
\[
\mu_n(K)\leq \frac{1}{r^p}\int_\Omega d(x,A)^p\ d\mu_n(x) = \frac{1}{r^p}\Wb_p^p(\mu_n,0) \leq \frac{C^p}{r^p}
\]
where $d(x,A)>r$ for any $x\in K$. In particular
\[
\sup\{\mu_n(K):n\in\NN\}<\infty
\] for any compact $K\subset \Omega$, which due to lemma \ref{lem:vague-relative-compactness}, implies that $\{\mu_n : n\in\NN\}$ is vaguely precompact. Therefore it has a subsequence $\{\mu_{n_k}\}_{k\in\NN}$ vaguely convergent to some $\mu$. By the lower semi continuity of $\Wb_p$, we get that
\[
\Wb^p_p(\mu,0)\leq \liminf_{k\to \infty} \Wb^p_p(\mu_{n_k},0) < \infty,
\]
therefore $\mu\in \calM_p(X,A)$. Moreover, for any $n\in\NN$,
\[
\Wb_p(\mu_n,\mu)\leq \liminf_{k\to \infty} \Wb^p_p(\mu_n,\mu_{n_k}),
\]
which implies
\[
\lim_{n\to\infty}\Wb_p(\mu_n,\mu)\leq \lim_{n\to\infty}\liminf_{k\to \infty} \Wb^p_p(\mu_n,\mu_{n_k}) = 0
\] where the last inequality comes from the fact that $\{\mu_n\}_{n\in\NN}$ is Cauchy. Therefore $\mu_n\to \mu$ in $\calM_p(X,A)$, which implies the completeness.
\end{proof}

\subsection{Geodesics}\label{sec:geodesicity-OPT}
We now prove that $\calM_p(X,A)$ is a geodesic space whenever $X$ has this property. This is a generalisation of \cite[Proposition 2.9]{FG10} for proper metric spaces.
\begin{prop}\label{prop:geodesics}
Let $(X,A)$ be a metric pair such that $X$ is geodesic. Then $\calM_p(X,A)$ is geodesic as well. Furthermore, if $(\mu_t)_{t\in [0,1]}$ is a constant speed geodesic in $\calM_p(X,A)$, then there exists a measure $\ggamma$ on $\Geo(X)$ such that $(e_0,e_1)_\# \ggamma|_{E_\Omega} \in \Opt(\mu_0,\mu_1)$ and 
\[\mu_t = (e_t)_\#\ggamma|_{\Omega}\]
for any $t\in [0,1]$.
\end{prop}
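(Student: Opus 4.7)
The plan is to first prove existence of geodesics by lifting an optimal partial transport plan to a measure on $\Geo(X)$, and then to tackle the structure statement by a dyadic approximation argument in the spirit of the classical Wasserstein theory.

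For existence, given $\mu_0,\mu_1\in\calM_p(X,A)$ I would take $\gamma\in \Opt(\mu_0,\mu_1)$ via theorem \ref{thm:existence-of-optimal-partial-transport}. Since $X$ is proper and geodesic, $\Geo(X)$ is proper by Arzel\`a--Ascoli, hence $\sigma$-compact. The set
\[
E=\{(x,y,\xi)\in X\times X\times \Geo(X) : \xi_0=x,\ \xi_1=y\}
\]
is closed and $\sigma$-compact with $\pi^{12}(E)=X\times X$, so theorem \ref{t:azoff} furnishes a Borel selection $\Phi\colon X\times X\to \Geo(X)$ with $(e_0,e_1)\circ\Phi=\id$. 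Setting $\ggamma=\Phi_\#\gamma$ and $\mu_t=(e_t)_\#\ggamma|_\Omega$, for $0\le s<t\le 1$ the measure $\eta=((e_s,e_t)_\#\ggamma)|_{E_\Omega}$ lies in $\Adm(\mu_s,\mu_t)$ (checking marginals on any Borel $B\subset\Omega$ via the inclusion $B\times X\subset E_\Omega$). Therefore
\[
\Wb_p^p(\mu_s,\mu_t)\leq \int_{E_\Omega} d(x,y)^p\, d\eta(x,y)\leq \int_{\Geo(X)} d(\xi_s,\xi_t)^p\, d\ggamma(\xi)=(t-s)^p\Wb_p^p(\mu_0,\mu_1),
\]
using the constant speed identity $d(\xi_s,\xi_t)=(t-s)d(\xi_0,\xi_1)$ and $(e_0,e_1)_\#\ggamma=\gamma\in\Opt(\mu_0,\mu_1)$. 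The triangle inequality for $\Wb_p$ then forces equality throughout, yielding $\Wb_p(\mu_s,\mu_t)=(t-s)\Wb_p(\mu_0,\mu_1)$.

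For the structure statement, given a constant speed geodesic $(\mu_t)$ I would argue by dyadic refinement. At scale $n$, select $\gamma^{k,n}\in\Opt(\mu_{k/2^n},\mu_{(k+1)/2^n})$ and iterate lemma \ref{lem:gluing} to obtain a Borel measure $\eta_n$ on $X^{2^n+1}$ whose consecutive pair marginals agree with the $\gamma^{k,n}$ up to extra mass sitting on $\Delta(A\times A)$. The equality case in the $L^p$ (Minkowski) triangle inequality, together with the geodesic hypothesis, forces the $(0,2^n)$-marginal of $\eta_n$ restricted to $E_\Omega$ to lie in $\Opt(\mu_0,\mu_1)$, and the intermediate points along $\eta_n$-almost every tuple to lie on a geodesic joining its extremes. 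Interpolating each such tuple by piecewise geodesics through the Borel selection $\Phi$ of the first part produces a measure $\ggamma_n\in\calM(\Geo(X))$ (possibly carrying some mass on constant geodesics in $A$, which is annihilated upon restriction to $\Omega$). A vague subsequential limit $\ggamma$ of $\{\ggamma_n\}$ then satisfies the desired properties: vague lower semicontinuity of $\Wb_p$ (theorem \ref{thm:wb-basic-properties}) transfers the optimality condition to $\ggamma$, while continuity of $t\mapsto \mu_t$ and of $t\mapsto (e_t)_\#\ggamma|_\Omega$ in the vague topology extends the marginal identity from dyadic $t$ to all $t\in[0,1]$.

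The hardest step will be the converse. Three points require care: (i) controlling the extra diagonal mass produced by the $2^n$ iterated applications of lemma \ref{lem:gluing}, so that it does not blow up as $n\to\infty$; (ii) establishing vague precompactness of $\{\ggamma_n\}$ in $\calM(\Geo(X))$, which requires combining the $p$-integrability of $d(\cdot,A)$ with an Arzel\`a--Ascoli-type confinement of the mass to equi-Lipschitz curves with controlled endpoints; and (iii) justifying the passage of the optimality and marginal constraints through the vague limit.
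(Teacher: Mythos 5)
Your existence argument (the first half) is correct and is essentially the paper's proof: lift an optimal plan $\gamma\in\Opt(\mu_0,\mu_1)$ through a Borel geodesic selection, set $\ggamma=\GeoSel_\#\gamma$ and $\mu_t=(e_t)_\#\ggamma|_\Omega$, verify the constant-speed estimate, and close with the triangle inequality.

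For the structure statement you identify the right dyadic strategy but explicitly leave the three hard points unresolved, and the routes you sketch for them are not the right ones. Issue (i) is a non-issue: the extra mass from lemma \ref{lem:gluing} sits on $\Delta(A\times A)$, so after the concatenation-of-geodesics map it is supported on constant geodesics in $A$ and can simply be discarded by replacing $\ggamma^m$ with $\ggamma^m|_{(e_0,e_1)^{-1}(E_\Omega)}$; nothing blows up. For (ii), an Arzel\`a--Ascoli argument alone does not deliver the uniform bound $\sup_m\ggamma^m(\calK)<\infty$ needed for lemma \ref{lem:vague-relative-compactness}, because you have no a priori control of the total mass of $\ggamma^m$. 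The missing insight is remark \ref{rem:intermediate points between points in S}: interior points of geodesics with endpoints in $\calS\cap E_\Omega$ lie in $\Omega$, so for compact $\calK\subset(e_0,e_1)^{-1}(\calS\cap E_\Omega)$ the set $e_{1/2}(\calK)\subset\Omega$ is compact and $\ggamma^m(\calK)\leq\ggamma^m(e_{1/2}^{-1}(e_{1/2}(\calK)))=\mu_{1/2}(e_{1/2}(\calK))<\infty$. For (iii), vague lower semicontinuity of $\Wb_p$ concerns pairs of measures, not plans, so it does not transfer the marginal constraint or the optimality; the pushforward by $e_t$ also does not commute with vague limits, since $e_t$ is not proper. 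What is actually needed is a tightness lemma (lemma \ref{lem:weak precompactness of geodesic measures}) upgrading vague to weak convergence of $\ggamma^m|_{e_t^{-1}(C)}$ for compact $C\subset\Omega$, giving $(e_t)_\#\ggamma|_\Omega=\mu_t$ at dyadic $t$ and admissibility of $(e_0,e_1)_\#\ggamma$, followed by lemma \ref{lem:vague-convergence} applied to the weighted measures $d(e_0,e_1)^p\ggamma^m$ to pass the cost. Finally, the extension from dyadic $t$ to all $t\in[0,1]$ is not a continuity argument but a sandwich: for dyadic $t_k\uparrow t$ and $t^l\downarrow t$,
\[
\Wb_p^p(\mu_{t_k},(e_t)_\#\ggamma|_\Omega)\leq C((e_{t_k},e_t)_\#\ggamma|_{E_\Omega})\leq C((e_{t_k},e_{t^l})_\#\ggamma|_{E_\Omega})=\Wb_p^p(\mu_{t_k},\mu_{t^l})\to 0,
\]
which, combined with $\mu_{t_k}\to\mu_t$, yields $(e_t)_\#\ggamma|_\Omega=\mu_t$.
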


For the proof of proposition \ref{prop:geodesics} we follow ideas from the proofs of \cite[Theorem 2.10]{AG13} and \cite[Proposition 2.9]{FG10}. In particular, we need the following technical observation.

\begin{lem}\label{lem:geodesic-selection}
There is a Borel measurable map $\GeoSel\colon X\times X\to \Geo(X)$ such that \[(e_0,e_1)\circ \GeoSel = \id_{X\times X}.\]
\end{lem}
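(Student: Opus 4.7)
The plan is to apply the Borel measurable selection principle (theorem \ref{t:azoff}) to the product $(X \times X) \times \Geo(X)$ with the natural closed set carving out geodesics with prescribed endpoints.

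First, I would check the hypotheses of theorem \ref{t:azoff}. The space $X \times X$ is complete and separable because $X$ is. The space $\Geo(X)$ is complete and separable (as noted in the excerpt just after the definition of geodesic), and moreover it is proper by the Arzel\`a--Ascoli theorem, since $X$ is proper and geodesics in $\Geo(X)$ are equi-Lipschitz (their Lipschitz constant is the distance between the endpoints, which stays bounded on bounded sets). In particular, $\Geo(X)$ is $\sigma$-compact, and hence so is the product $(X \times X) \times \Geo(X)$.

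Next I would define
\[
E = \bigl\{((x,y), \xi) \in (X \times X) \times \Geo(X) : e_0(\xi) = x,\ e_1(\xi) = y \bigr\}.
\]
Since the evaluation maps $e_0, e_1 \colon \Geo(X) \to X$ are continuous (indeed $1$-Lipschitz) for the uniform metric, $E$ is a closed subset of $(X \times X) \times \Geo(X)$, and therefore $\sigma$-compact. Because $X$ is geodesic, for every $(x,y) \in X \times X$ there exists $\xi \in \Geo(X)$ with $\xi_0 = x$ and $\xi_1 = y$, so the projection $\pi^1(E)$ of $E$ onto the first factor $X \times X$ is all of $X \times X$.

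Theorem \ref{t:azoff} then provides a Borel measurable map $\GeoSel \colon X \times X \to \Geo(X)$ whose graph is contained in $E$. By definition of $E$, this means $e_0(\GeoSel(x,y)) = x$ and $e_1(\GeoSel(x,y)) = y$ for every $(x,y) \in X \times X$, i.e.\ $(e_0, e_1) \circ \GeoSel = \id_{X \times X}$, as required. There is no serious obstacle here: everything reduces to verifying the topological hypotheses of the measurable selection theorem, with the properness of $X$ being what upgrades $\Geo(X)$ from merely complete and separable to $\sigma$-compact, and hence makes $E$ $\sigma$-compact.
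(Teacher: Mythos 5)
Your proof is correct and follows essentially the same strategy as the paper: apply the measurable selection theorem (Theorem~\ref{t:azoff}) to the closed, $\sigma$-compact set of $(x,y,\xi)$ with $\xi$ a geodesic from $x$ to $y$, using properness of $X$ to get properness (hence $\sigma$-compactness) of $\Geo(X)$. The only cosmetic difference is that the paper's set $E$ additionally imposes $d(x,y)=\calL(\xi)$ (invoking lower semi-continuity of $\calL$ for closedness), which is in fact redundant for $\xi\in\Geo(X)$ since a constant-speed geodesic automatically has length equal to the distance between its endpoints; your version without this condition is slightly cleaner.
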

\begin{proof}
Since $X$ and $\Geo(X)$ are complete, separable and proper, the set
\[
E = \{(x,y,\xi)\in X\times X\times \Geo(X): \xi_0=x,\ \xi_1=y,\ d(x,y)=\calL(\xi)\}
\]
is $\sigma$-compact. Moreover, by the continuity of $d$, the lower semi-continuity of the length functional $\calL$ (see \cite[Proposition 2.3.4]{BBI01}),  the continuity of the evaluation maps $e_0$, $e_1$, and the fact that $X$ is geodesic, it follows that $E$ is closed and $\pi^{1,2}(E)=X\times X$, where $\pi^{1,2}\colon X\times X\times \Geo(X)\to X\times X$ is the projection onto the first two factors. Therefore, by theorem \ref{t:azoff}, the claim follows.
\end{proof}

\begin{rem}\label{rem:intermediate points between points in S}
Observe that for any $(x,y)\in \calS$, where $\calS$ is given by \eqref{eq:set S}, and any $t\in(0,1)$ and $\xi\in\Geo(X)$ such that $(e_0,e_1)(\xi)=(x,y)$, we have $\xi_t \in \Omega$. Indeed, if this is not the case, then for some choice of $(x,y)\in\calS$, $t\in(0,1)$ and $\xi\in\Geo$ such that $(e_0,e_1)(\xi)=(x,y)$, we have $\xi_t\in A$. Therefore,
\[
d(x,A)^p+d(y,A)^p \leq d(x,\xi_t)^p+d(y,\xi_t)^p = ((1-t)^p+t^p)d(x,y)^p < d(x,y)^p
\]
which contradicts the fact that $(x,y)\in \calS$. 
\end{rem}

We also need the following lemma to prove the second part of proposition \ref{prop:geodesics}. Intuitively speaking, this lemma allows us to construct a sequence of measures on $\Geo(X)$ that interpolate a given geodesic $(\mu_t)_{t\in [0,1]}$ in $\calM_p(X,A)$ at arbitrarily fine dyadic rational parameters in $[0,1]$.

\begin{lem}\label{lem:geodesic measure on dyadics}
Let $(\mu_t)_{t\in[0,1]}$ be a geodesic in $\calM_p(X,A)$. Then, for any $m\in\NN$, we can find $\ggamma^m\in\calB(\Geo(X))$ such that 
\begin{align*}
(e_{j/2^m},e_{k/2^m})_\#\ggamma^m|_{E_\Omega} &\in \Opt(\mu_{j/2^m},\mu_{k/2^m}),\\
\supp((e_{j/2^m},e_{k/2^m})_\#\ggamma^m|_{A\times A}) &\subset \Delta(A\times A), 
\end{align*}
for any $j,k\in\{0,\dots,2^m\}$.
\end{lem}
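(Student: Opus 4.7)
The plan is to build $\ggamma^m$ directly by first constructing a multi-marginal coupling $\Pi^m$ on $X^{2^m+1}$ that records optimal couplings between every pair of dyadic-time marginals, and then lifting $\Pi^m$ to a measure on $\Geo(X)$ via a Borel measurable selection of geodesics. For each $j\in\{0,\ldots,2^m-1\}$ I would choose an optimal plan $\gamma_j\in\Opt(\mu_{j/2^m},\mu_{(j+1)/2^m})$, available by theorem~\ref{thm:existence-of-optimal-partial-transport}, and iteratively apply lemma~\ref{lem:gluing} to stitch the $2^m$ plans into a Radon measure $\Pi^m$ on $X^{2^m+1}$ satisfying, for every $j$,
\[
\pi^{j,j+1}_\#\Pi^m = \gamma_j + \sigma_j,
\]
where $\pi^{j,j+1}$ denotes the projection onto the $j$-th and $(j+1)$-th factors and $\sigma_j$ is supported on $\Delta(A\times A)$.

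The key computations exploit the geodesic assumption $\Wb_p(\mu_s,\mu_t)=|t-s|\Wb_p(\mu_0,\mu_1)$ together with the fact that $\sigma_j$ sits on the diagonal of $A\times A$ where $d$ vanishes, which yields $\|d(x_j,x_{j+1})\|_{L^p(\Pi^m)} = \Wb_p(\mu_{j/2^m},\mu_{(j+1)/2^m}) = 2^{-m}\Wb_p(\mu_0,\mu_1)$. Since $(\pi^{0,2^m}_\#\Pi^m)|_{E_\Omega}\in\Adm(\mu_0,\mu_1)$, the pointwise triangle inequality combined with Minkowski's inequality forces the chain
\[
\Wb_p(\mu_0,\mu_1) \leq \|d(x_0,x_{2^m})\|_{L^p(\Pi^m)} \leq \sum_{j=0}^{2^m-1}\|d(x_j,x_{j+1})\|_{L^p(\Pi^m)} = \Wb_p(\mu_0,\mu_1)
\]
to collapse to equalities. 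This compels, for $\Pi^m$-almost every $(x_0,\ldots,x_{2^m})$, both the pointwise triangle equality $d(x_0,x_{2^m})=\sum_j d(x_j,x_{j+1})$ and, through Minkowski equality combined with the common value of the $L^p$-norms, the uniform spacing $d(x_j,x_{j+1}) = 2^{-m}d(x_0,x_{2^m})$ for every $j$. The tuple therefore lies on a common geodesic of $X$ at evenly spaced parameters $j/2^m$.

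Next I would apply theorem~\ref{t:azoff} to the closed $\sigma$-compact set $\{((x_0,\ldots,x_{2^m}),\xi)\in X^{2^m+1}\times\Geo(X) : \xi_{j/2^m}=x_j \text{ for all } j\}$; by the spacing identities, $\Pi^m$ is concentrated on the projection of this set to $X^{2^m+1}$, so one obtains a Borel measurable selection $\Psi$ defined $\Pi^m$-almost everywhere, and sets $\ggamma^m = \Psi_\#\Pi^m$. Then $(e_{j/2^m},e_{k/2^m})_\#\ggamma^m = \pi^{j,k}_\#\Pi^m$ by construction, and rerunning the estimate chain above on the subsequence $j/2^m,(j+1)/2^m,\ldots,k/2^m$ together with $\Wb_p(\mu_{j/2^m},\mu_{k/2^m}) = ((k-j)/2^m)\Wb_p(\mu_0,\mu_1)$ identifies $(\pi^{j,k}_\#\Pi^m)|_{E_\Omega}$ with an optimal plan in $\Opt(\mu_{j/2^m},\mu_{k/2^m})$ and forces the $A\times A$ part of $\pi^{j,k}_\#\Pi^m$ to sit on $\Delta(A\times A)$.

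The main technical obstacle will be the bookkeeping in the iterated application of lemma~\ref{lem:gluing}, since consecutive middle marginals match only on $\Omega$ and not on $A$, so each gluing step introduces auxiliary mass on $\Delta(A\times A)$, and these contributions must be tracked carefully so that the decomposition $\pi^{j,j+1}_\#\Pi^m = \gamma_j+\sigma_j$ continues to hold for every $j$ once the iteration terminates. A secondary subtlety is that the Minkowski-equality proportionality used above applies strictly only for $p>1$; in the borderline case $p=1$ the uniform spacing would instead be recovered by applying the admissibility bound separately to each intermediate projection $\pi^{0,j}_\#\Pi^m$, pinning down $\|d(x_0,x_j)\|_{L^1(\Pi^m)}=(j/2^m)\Wb_1(\mu_0,\mu_1)$ and then arguing pointwise from there.
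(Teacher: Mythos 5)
Your proposal follows essentially the same route as the paper: choose optimal plans between consecutive dyadic-time marginals, glue them iteratively via Lemma~\ref{lem:gluing} into a multi-marginal measure on $X^{2^m+1}$, lift to a measure on curves by a Borel selection, and exploit the collapse of the Minkowski/triangle inequality chain together with admissibility of the pairwise projections to obtain optimality and the support condition on $\Delta(A\times A)$. The one genuine difference is where the Borel selection is applied. The paper defines the lift \emph{upfront}, via the concatenation map $G^m(x_0,\dots,x_{2^m})=\GeoSel(x_0,x_1)*\dots*\GeoSel(x_{2^m-1},x_{2^m})$, so that $e_{j/2^m}\circ G^m=\pi^j$ holds by fiat, and only afterwards runs the $L^p$-collapse to deduce the pointwise additivity $d(\xi_0,\xi_1)=\sum_i d(\xi_{i/2^m},\xi_{(i+1)/2^m})$. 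You instead derive structural facts about $\Pi^m$ first and then select a single geodesic through all $2^m+1$ samples at the dyadic times; for that selection to be nonempty, the \emph{uniform spacing} $d(x_j,x_{j+1})=2^{-m}d(x_0,x_{2^m})$ is indispensable, whereas the concatenation construction always returns a curve in $\calC([0,1],X)$ and uniform spacing is only needed to conclude it has constant speed.

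Your Minkowski-equality argument for uniform spacing is correct when $p>1$: equality in $\|\sum_i f_i\|_p=\sum_i\|f_i\|_p$ with nonnegative $f_i$ of equal norm forces $f_i=f_j$ a.e., and combined with the pointwise sum this gives the spacing. This is in fact a sharpening of the paper's argument, which only establishes that the concatenated curves are minimizing, not that they have constant speed — a subtle point glossed over in the paper's claim that $\ggamma^m\in\calB(\Geo(X))$. However, your proposed workaround for $p=1$ does not close the gap. Knowing $\|d(x_0,x_j)\|_{L^1}=(j/2^m)\Wb_1(\mu_0,\mu_1)$ for every $j$ together with the pointwise triangle equality does \emph{not} determine $d(x_j,x_{j+1})$ pointwise: for instance, a measure $\Pi^m$ that splits its mass evenly between a tuple with consecutive spacings $(0.8c,1.2c,c,\dots,c)$ and one with spacings $(1.2c,0.8c,c,\dots,c)$ satisfies every one of your $L^1$-norm identities and the pointwise additivity, yet neither atom is evenly spaced. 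The failure is structural: for $p=1$ and nonnegative integrands, $\|f+g\|_1=\|f\|_1+\|g\|_1$ holds identically and carries no pointwise proportionality information. So for $p=1$ the Borel selection onto $\Geo(X)$ in your construction is not available as stated, and the set of tuples you feed to Theorem~\ref{t:azoff} need not carry full $\Pi^m$-mass.
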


\begin{proof}
For each $m\in\NN$ and each $i\in\{0,\dots, 2^m-1\}$, choose 
\[
\gamma^{i,m}\in \Opt(\mu_{i/2^m},\mu_{(i+1)/2^m})
\]
and apply lemma \ref{lem:gluing} iteratively to get $\gamma^m\in\calB(X^{2^m+1})$ such that
\[
\pi^{i,i+1}_\#\gamma^m = \gamma^{i,m}+\sigma^{i,m},
\]
where $\sigma^{i,m}$ is supported on $\Delta(A\times A)$. Then, by letting $G^m \colon X^{2^m+1}\to \calC([0,1],X)$ be the Borel measurable map given by
\[
(x_0,\dots,x_{2^m}) \mapsto \GeoSel(x_0,x_1)*\dots *\GeoSel(x_{2^m},x_{2^m+1}),
\]
where $*$ denotes concatenation of paths, we can define
\[
\ggamma^m = G^m_\#\gamma^m.
\]
Now, for any $j,k\in\{0,\dots,2^m\}$, we have
\begin{align*}
\|d(e_{j/2^m},e_{k/2^m})\|_{L^p(\ggamma^m)}
&\leq \left\|\sum_{i=j}^{k-1}d(e_{i/2^m},e_{(i+1)/2^m})\right\|_{L^p(\ggamma^m)}\\
&\leq \sum_{i=j}^{k-1}\|d(e_{i/2^m},e_{(i+1)/2^m})\|_{L^p(\ggamma^m)}\\
&= \sum_{i=j}^{k-1}\Wb_p(\mu_{i/2^m},\mu_{(i+1)/2^m})\\
&= \Wb_p(\mu_{j/2^m},\mu_{k/2^m})
\end{align*}
where the first inequality is given by the monotonicity of the integral; the second one is the triangle inequality in $L^p(\ggamma^m)$, and the last two lines are consequences of the definition of $\ggamma^m$ and the fact that $(\mu_t)_{t\in [0,1]}$ is a geodesic. As a consequence, we get that 
\[
(e_{j/2^m},e_{k/2^m})_\#\ggamma^m|_{E_\Omega} \in \Opt(\mu_{j/2^m},\mu_{k/2^m})
\]
and
\[
d(\xi_{j/2^m},\xi_{k/2^m}) = \sum_{i=j}^{k-1}d(\xi_{i/2^m},\xi_{(i+1)/2^m})
\]
for $\ggamma^m$-a.e. $\xi\in\calC([0,1],X)$, and the claim follows.
\end{proof}

The next results are similar in spirit to lemma \ref{lem:weak convergence of minimising sequence} and Corollary \ref{lem:gamma-eps is closed}, and they will allow us to construct the limit measure we need for the second part of proposition \ref{prop:geodesics}. Let us define
\begin{equation}\label{eq:optgeo}
\OptGeo(\mu,\nu) = \{\ggamma\in\calM((e_0,e_1)^{-1}(E_\Omega)): (e_0,e_1)_\#\ggamma\in\Opt(\mu,\nu)\}.
\end{equation}

\begin{lem}\label{lem:weak precompactness of geodesic measures}
For any $\mu,\nu,\rho\in\calM_p(X,A)$, any compact $C\subset \Omega$, and any $t\in [0,1]$, the set 
\begin{equation}
\left\{\ggamma|_{e^{-1}_t(C)}:\ggamma\in\OptGeo(\mu,\nu),\ (e_t)_\#\ggamma|_\Omega=\rho\right\}
\end{equation}
is weakly relatively compact. 
\end{lem}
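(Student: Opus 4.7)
The plan is to verify the two hypotheses of Prokhorov's theorem (lemma \ref{lem:prokhorov}) for the family
\[
\FF = \left\{\ggamma|_{e^{-1}_t(C)} : \ggamma\in\OptGeo(\mu,\nu),\ (e_t)_\#\ggamma|_\Omega=\rho\right\}
\]
of non-negative Borel measures with finite mass on the complete separable space $\Geo(X)$. For the uniform bound on total variation, since $C \subset \Omega$ and $(e_t)_\#\ggamma|_\Omega = \rho$, the total mass of each element of $\FF$ is $\ggamma(e_t^{-1}(C)) = (e_t)_\#\ggamma(C) = \rho(C)$, which is finite and independent of $\ggamma$.

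For tightness I would fix $p_0 \in X$ and $r > 0$ with $C \subset \overline{B}_r(p_0)$, and for each $R > r$ define
\[
K_R = \{\xi \in \Geo(X) : \xi_0, \xi_1 \in \overline{B}_R(p_0)\}.
\]
Since $X$ is proper, every $\xi \in K_R$ satisfies $d(\xi_0,\xi_1)\leq 2R$, hence is $2R$-Lipschitz with image in the compact set $\overline{B}_{3R}(p_0)$; by Arzelà-Ascoli, $K_R$ is compact in $\Geo(X)$. To bound the $\ggamma$-measure of its complement, I would use that $(e_0, e_1)_\#\ggamma \in \Opt(\mu,\nu)$, whence $\int d(\xi_0,\xi_1)^p\, d\ggamma = \Wb_p^p(\mu,\nu)$, together with the constant-speed identity $d(\xi_0, \xi_t) = t\,d(\xi_0,\xi_1)$. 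Applying the H\"older argument from the proof of lemma \ref{lem:weak convergence of minimising sequence} to the push-forward $(e_0,e_t)_\#\ggamma$ with the compact set $C$ yields
\[
\ggamma(\{\xi : \xi_t \in C,\ \xi_0 \notin \overline{B}_R(p_0)\}) \leq \frac{t^p\,\Wb_p^p(\mu,\nu)}{(R-r)^p},
\]
and analogously using $(e_t,e_1)_\#\ggamma$ one bounds the symmetric term with $\xi_1 \notin \overline{B}_R(p_0)$. Since $\Geo(X)\setminus K_R$ (intersected with $e_t^{-1}(C)$) is covered by the union of these two sets, the total bound tends to $0$ uniformly in $\ggamma$ as $R \to \infty$, establishing tightness.

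The main obstacle I expect is the identification and verification of the compact subset $K_R \subset \Geo(X)$: Arzelà-Ascoli requires simultaneously controlling the Lipschitz constant of $\xi$ and the containment of its image in a compact subset of $X$, both of which must be driven by controlling the endpoint positions $\xi_0,\xi_1$. Once this is in place, the H\"older tightness estimate is essentially that of lemma \ref{lem:weak convergence of minimising sequence}, now carried out on $\Geo(X)$ via the evaluation maps, with the constant-speed identity converting the ambient cost bound to $t^p\,\Wb_p^p(\mu,\nu)$ and $(1-t)^p\,\Wb_p^p(\mu,\nu)$ respectively.
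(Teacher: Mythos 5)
Your proposal is correct and follows essentially the same approach as the paper's: Prokhorov's theorem applied on $\Geo(X)$, with uniform total mass $\rho(C)$, and tightness obtained by pushing the H\"older estimate of lemma \ref{lem:weak convergence of minimising sequence} through the evaluation maps to bound the measure of geodesics passing through $C$ but with an endpoint escaping a large ball, the complementary set being compact by Arzel\`a--Ascoli and properness. Your explicit split into the two cases $\xi_0 \notin \overline{B}_R(p_0)$ and $\xi_1 \notin \overline{B}_R(p_0)$, with constants $t^p$ and $(1-t)^p$, simply spells out what the paper abbreviates as ``an argument analogous to the proof of lemma \ref{lem:weak convergence of minimising sequence}'' and recovers the paper's bound since $t^p+(1-t)^p\leq 1$.
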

\begin{proof}
Let $p_0\in X$ and $R>r>0$ such that $C\subset \overline{B}_r(p_0)\subset \overline{B}_R(p_0)$. Then, an argument analogous to the proof of lemma \ref{lem:weak convergence of minimising sequence} shows that
\begin{equation}\label{eq:tightness of ggammas}
\ggamma(e^{-1}_t(C)\setminus (e_0,e_1)^{-1}(\overline{B}_R(p_0)\times \overline{B}_R(p_0)))\leq
\dfrac{\Wb_p^p(\mu_0,\mu_1)}{(R-r)^p}
\end{equation}
for any $\ggamma\in\OptGeo(\mu,\nu)$. Observe that $(e_0,e_1)^{-1}(\overline{B}_R(p_0)\times \overline{B}_R(p_0))$ is closed and, being the set of geodesics with endpoints in $\overline{B}_R(p_0)$, is contained in $\Geo(\overline{B}_{2R}(X))$, which is compact due to Arzel\`a--Ascoli theorem and the fact that $X$ is proper. By fixing $r>0$ and letting $R$ tend to infinity, the right hand side of inequality \eqref{eq:tightness of ggammas} can be made arbitrarily small, uniformly over $\ggamma$, which implies tightness.

On the other hand, each $\ggamma|_{e^{-1}_t(C)}$ has total mass $\rho(C)<\infty$. Therefore we have uniformly bounded total variation, and the claim follows from lemma \ref{lem:prokhorov}.
\end{proof}

\begin{cor}\label{cor:optgeo is vaguely closed}
For any $\mu,\nu,\rho\in\calM_p(X,A)$ and any $t\in [0,1]$, the set 
\begin{equation}
\left\{\ggamma:\ggamma\in\OptGeo(\mu,\nu),\ (e_t)_\#\ggamma|_\Omega=\rho\right\}
\end{equation}
is vaguely closed in $\calM((e_0,e_1)^{-1}(E_\Omega))$. 
\end{cor}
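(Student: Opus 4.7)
The plan is to adapt the argument of Corollary \ref{lem:gamma-eps is closed}, with Lemma \ref{lem:weak convergence of minimising sequence} replaced by Lemma \ref{lem:weak precompactness of geodesic measures}. Given a vaguely convergent sequence $\{\ggamma_n\}_{n\in\NN}$ in the set, with limit $\ggamma \in \calM((e_0,e_1)^{-1}(E_\Omega))$, I would verify separately that $\ggamma \in \OptGeo(\mu,\nu)$ and that $(e_t)_\#\ggamma|_\Omega = \rho$.

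For the first condition, I would observe that the evaluation map $(e_0,e_1)\colon (e_0,e_1)^{-1}(E_\Omega) \to E_\Omega$ is proper: the preimage of a compact $K\subset E_\Omega$ is contained in the set of geodesics with endpoints in a fixed compact ball of $X$, which is compact by Arzel\`a--Ascoli since $X$ is proper. Properness guarantees that pushforward preserves vague convergence, so $(e_0,e_1)_\#\ggamma_n \vto (e_0,e_1)_\#\ggamma$. Each $(e_0,e_1)_\#\ggamma_n$ lies in $\Opt(\mu,\nu)$, which is vaguely closed by the second part of Theorem \ref{thm:existence-of-optimal-partial-transport}, so the limit belongs to $\Opt(\mu,\nu)$, giving $\ggamma \in \OptGeo(\mu,\nu)$.

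For the second condition, fix $f\in\calC_c(\Omega)$ and let $C=\supp f$. Lemma \ref{lem:weak precompactness of geodesic measures} provides weak precompactness of $\{\ggamma_n|_{e_t^{-1}(C)}\}_{n\in\NN}$; passing to a subsequence, there is a weak limit $\widetilde\ggamma$. By testing against functions $h\in\calC_c((e_0,e_1)^{-1}(E_\Omega))$ supported inside $e_t^{-1}(C)$ — built from cut-offs in $d(\cdot,p_0)$ and $d(\cdot,A)$ applied to the endpoints $\xi_0,\xi_1$ — vague convergence of $\{\ggamma_n\}$ and weak convergence of the restrictions together force $\widetilde\ggamma = \ggamma|_{e_t^{-1}(C)}$. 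Since $f\circ e_t$ is bounded continuous and vanishes outside $e_t^{-1}(C)$, one then gets
\begin{equation*}
\int_\Omega f\, d\rho = \int (f\circ e_t)\, d\ggamma_n \to \int (f\circ e_t)\, d\widetilde\ggamma = \int (f\circ e_t)\, d\ggamma = \int_\Omega f\, d(e_t)_\#\ggamma|_\Omega,
\end{equation*}
and since $f$ is arbitrary, $(e_t)_\#\ggamma|_\Omega = \rho$.

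The main obstacle is the identification of the weak limit $\widetilde\ggamma$ with the restriction $\ggamma|_{e_t^{-1}(C)}$, complicated by the facts that $f\circ e_t$ is bounded continuous but not compactly supported on $(e_0,e_1)^{-1}(E_\Omega)$, and that $(e_0,e_1)^{-1}(E_\Omega)$ is only open in $\Geo(X)$. The required approximations rely on the tightness supplied by Lemma \ref{lem:weak precompactness of geodesic measures} together with the fact that sets compact in $(e_0,e_1)^{-1}(E_\Omega)$ must stay away both from infinity in $X$ and from the boundary geodesics whose endpoints lie in $A\times A$.
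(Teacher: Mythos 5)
Your proposal is correct, and for the optimality half it takes a genuinely different route from the paper. The paper simply repeats the Corollary \ref{lem:gamma-eps is closed} template: it applies Lemma \ref{lem:vague-convergence} to the sequence $d(e_0,e_1)^p\ggamma^m$ and bounded open sets $U\subset(e_0,e_1)^{-1}(E_\Omega)$ to get $C((e_0,e_1)_\#\ggamma|_U)\leq\Wb_p^p(\mu,\nu)$, then invokes monotone convergence, having separately checked admissibility. Your route instead observes that $(e_0,e_1)\colon(e_0,e_1)^{-1}(E_\Omega)\to E_\Omega$ is proper, so pushforward preserves vague convergence, and then quotes the vague compactness of $\Opt(\mu,\nu)$ from the second part of Theorem \ref{thm:existence-of-optimal-partial-transport}; since the vague topology on $\calM(E_\Omega)$ is Hausdorff, compact implies closed. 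This handles admissibility and optimality in one stroke and reuses already-proved structure rather than rerunning the portmanteau/monotone convergence argument, which is arguably cleaner. For the evaluation condition $(e_t)_\#\ggamma|_\Omega=\rho$, your argument coincides with the paper's, and the obstacle you flag — identifying the weak subsequential limit $\widetilde\ggamma$ of $\ggamma_n|_{e_t^{-1}(C)}$ with $\ggamma|_{e_t^{-1}(C)}$ — is real; the paper asserts it without further comment. One small correction to your sketch: testing against $h\in\calC_c$ supported in the \emph{interior} of $e_t^{-1}(C)$ only identifies $\widetilde\ggamma$ with $\ggamma$ on that interior, not on the full closed set $e_t^{-1}(C)$. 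This suffices, because $f$ vanishes on $\partial(\supp f)$ and therefore $f\circ e_t$ is supported in the open set $e_t^{-1}(\{f\neq 0\})\subset\Int\big(e_t^{-1}(C)\big)$, so $\int (f\circ e_t)\,d\widetilde\ggamma=\int (f\circ e_t)\,d\ggamma$ follows. Alternatively, since you have already established $\ggamma\in\OptGeo(\mu,\nu)$ by the first step, the tightness estimate in the proof of Lemma \ref{lem:weak precompactness of geodesic measures} applies to $\ggamma$ itself, and a direct truncation of $f\circ e_t$ by a compactly supported cut-off bypasses the identification of the weak limit altogether.
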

\begin{proof}
Let $\{\ggamma^m\}_{m\in\NN}\subset\OptGeo(\mu,\nu)$ be such that $(e_t)_\#\ggamma^m|_\Omega = \rho$ for all $m\in\NN$, and assume that $\ggamma^m\vto \ggamma$ for some $\ggamma\in\calM((e_0,e_1)^{-1}(E_\Omega))$. Thanks to lemma \ref{lem:weak precompactness of geodesic measures}, for any compact $C\subset \Omega$, $\{\ggamma^m|_{e^{-1}_t(C)}\}_{m\in\NN}$ is weakly relatively compact, therefore $\ggamma^m|_{e^{-1}_t(C)}\wto \ggamma|_{e^{-1}_t(C)}$. In particular, if $f\in\calC_c(\Omega)$ then
\begin{align*}
\int_{\Omega} f\ d(e_t)_\#\ggamma &= \int_{e_t^{-1}(\supp(f))} f\circ e_t\  d\ggamma \\
&= \lim_{m\to\infty} \int_{e_t^{-1}(\supp(f))} f\circ e_t\  d\ggamma^m\\
&= \lim_{m\to\infty} \int_{\supp(f)} f\ d(e_t)_\#\ggamma^m\\
&= \int_{\Omega} f\ d\mu_t,
\end{align*}
which implies $(e_t)_\#\ggamma|_{\Omega}=\rho$. Analogously, $(e_0,e_1)_\#\ggamma\in\Adm(\mu,\nu)$.

Now, we prove that $\ggamma\in\OptGeo(\mu,\nu)$. Indeed, by lemma \ref{lem:vague-convergence} applied to the sequence $\{d(e_0,e_1)^p \ggamma^{m}\}_{m\in\NN}$, which is vaguely convergent to $d(e_0,e_1)^p\ggamma$, and any bounded open set $U\subset (e_0,e_1)^{-1}(E_\Omega)$, we get
    \[
    C((e_0,e_1)_\#\ggamma|_U) \leq \liminf_{m\to \infty} C((e_0,e_1)_\#\ggamma^m|_U)\leq \Wb_p^p(\mu,\nu).
    \]
By the monotone convergence theorem, the claim follows.
\end{proof}

\begin{proof}[Proof of proposition \ref{prop:geodesics}]
Let $\mu^0,\mu^1\in \calM_p(X,A)$ and choose $\gamma \in \Opt(\mu^0,\mu^1)$. By lemma \ref{lem:geodesic-selection} there is a measurable map $\GeoSel\colon X\times X\to \Geo(X)$ such that $\GeoSel(x,y)$ is a constant speed geodesic joining $x$ and $y$. We define a measure $\ggamma$ on $\Geo(X)$ by
\[
\ggamma = \GeoSel_\# \gamma.
\]
In particular, since $\gamma$ is concentrated on $\calS$ by item \ref{item:optimal plans are concentrated on S} in proposition \ref{prop:criteria for optimality}, then $\ggamma$ is concentrated on $(e_0,e_1)^{-1}(\calS)$, which implies that $(e_t)_\#\ggamma$ is concentrated on $\Omega$, for any $t\in (0,1)$, by remark \ref{rem:intermediate points between points in S}. Observe, however, that this is not necessarily the case for $t=0$ and $t=1$. Let $\mu_t$ be given by 
\[
\mu_t = (e_t)_\#\ggamma|_\Omega.
\]
We claim that the curve $(\mu_t)_{t\in [0,1]}$ is a constant speed geodesic joining $\mu^0$ and $\mu^1$ in $\calM_p(X,A)$. Indeed, since $(e_0,e_t)\circ \GeoSel = \id$ then 
\[
\mu^0=\pi^1_\#\gamma|_\Omega=\pi^1_\#(e_0,e_1)_\#\GeoSel_\#\gamma|_\Omega=(e_0)_\#\ggamma|_\Omega=\mu_0
\]
and, analogously, $\mu^1 = \mu_1$. Moreover, for any $s,t\in [0,1]$,
\begin{align*}
    \Wb_p^p(\mu_t,\mu_s) &= \Wb_p^p({(e_t)}_\#\ggamma|_\Omega,{(e_s)}_\#\ggamma|_\Omega) \\
    &\leq C((e_t,e_s)_\#\ggamma|_{E_\Omega})\\
    &=\int_{E_\Omega} d(x,y)^p\ d(e_t,e_s)_\#\ggamma(x,y)\\
    &=\int_{E_\Omega} d(\GeoSel(x,y)_t,\GeoSel(x,y)_s)^p\ d\gamma(x,y)\\
    &=|t-s|^p\int_{E_\Omega} d(x,y)^p\ d\gamma(x,y)\\
    &=|t-s|^p \Wb_p^p(\mu_0,\mu_1).
\end{align*}
This argument implies both that $\mu_t\in \calM_p(X,A)$ for any $t\in [0,1]$, by the triangle inequality, and that $(\mu_t)_{t\in [0,1]}\in \Geo(\calM_p(X,A))$.

Now, for the second part of the theorem, let $(\mu_t)_{t\in [0,1]}$ be a geodesic in $\calM_p(X,A)$. We want to construct $\ggamma\in\calM((e_0,e_1)^{-1}(E_\Omega))$ such that $(e_0,e_1)_\#\ggamma\in\Opt(\mu_0,\mu_1)$ and $(e_t)_\#\ggamma|_\Omega = \mu_t$ for all $t\in [0,1]$. We will get such $\ggamma$ as a limit of a sequence of measures given by lemma \ref{lem:geodesic measure on dyadics}. 

Indeed, for each $m\in\NN$, let $\ggamma^m\in\calB(\Geo(X))$ be as in lemma \ref{lem:geodesic measure on dyadics}. In particular, $(e_0,e_1)_\#\ggamma^m|_{E_\Omega}\in\Opt(\mu_0,\mu_1)$. Due to item \ref{item:optimal plans are concentrated on S} in proposition \ref{prop:criteria for optimality}, $\ggamma^m|_{(e_0,e_1)^{-1}(E_\Omega)}$ is concentrated on $(e_0,e_1)^{-1}(\calS\cap E_\Omega)$. Additionally, lemma \ref{lem:geodesic measure on dyadics} implies that $\ggamma^m|_{(e_0,e_1)^{-1}(A\times A)}$ is supported on constant geodesics. Therefore, without loss of generality, we can assume that $\ggamma^m=\ggamma^m|_{(e_0,e_1)^{-1}(E_\Omega)}$.

We now prove that $\{\ggamma^m\}_{m\in\NN}$ is vaguely relatively compact in $\calM((e_0,e_1)^{-1}(E_\Omega))$. Indeed, if $\calK\subset (e_0,e_1)^{-1}(\calS\cap E_\Omega)$ is a compact set, then $e_{1/2}(\calK)\subset\Omega$ by remark \ref{rem:intermediate points between points in S}, and thanks to lemma \ref{lem:geodesic measure on dyadics},
\[
 \ggamma^m(\calK) \leq \ggamma^m((e_{1/2})^{-1}(e_{1/2}(\calK))) = (e_{1/2})_\#\ggamma^m(e_{1/2}(\calK)) = \mu_{1/2}(e_{1/2}(\calK)).
\]
Since $\mu_{1/2}$ is a Radon measure on $\Omega$, and $e_{1/2}(\calK)$ is compact, we get that
\[
 \sup_{m\in \NN}\ggamma^m(\calK) <\infty,
\]
which proves the claim, due to remark \ref{rem:locally finite equals radon} and lemma \ref{lem:vague-relative-compactness}, since $(e_0,e_1)^{-1}(E_\Omega)$ is an open subset of a separable, locally compact metric space.

Thus, we can assume, up to passing to a subsequence, that $\{\ggamma^m\}_{m\in\NN}$ is vaguely convergent to some $\ggamma\in\calM((e_0,e_1)^{-1}(E_\Omega))$. By corollary \ref{cor:optgeo is vaguely closed}, $(e_0,e_1)_\#\ggamma\in\Opt(\mu_0,\mu_1)$ and $(e_t)_\#\ggamma|_\Omega = \mu_t$ for any dyadic rational $t\in[0,1]$.

Finally, for any other $t\in [0,1]$, let $\{t_k\}_{k\in\NN}$ and $\{t^l\}_{l\in\NN}$ be two sequences of dyadic rational numbers converging to $t$, with $t_k\leq t\leq t^l$ for any $k,l\in\NN$, and observe that $(e_{t_k},e_t)_\#\ggamma|_{E_\Omega}\in \Adm(\mu_{t_k},(e_t)_\#\ggamma|_\Omega)$. Therefore, 
\begin{align*}
\Wb_p^p(\mu_{t_k},(e_t)_\#\ggamma|_\Omega) &\leq C((e_{t_k},e_t)_\#\ggamma|_{E_\Omega})\\
&\leq C((e_{t_k},e_{t^l})_\#\ggamma|_{E_\Omega})\\
&= \Wb_p^p(\mu_{t_k},\mu_{t^l}).
\end{align*}
By letting $k,l\to\infty$, we get that $(e_t)_\#\ggamma|_\Omega = \mu_t$ as claimed.
\end{proof}

We now prove that $\calM_p(X,A)$ inherits the property of being non-branching, whenever $p>1$. This is analogous to the second half of \cite[Proposition 2.9]{FG10}, and the proof adapts ideas from \cite[Proposition 2.16]{AG13}.

\begin{prop}\label{prop:non-branching}
Let $(X,A)$ be a metric pair such that $X$ is geodesic and non-branching, and $p\in (1,\infty)$. Then $\calM_p(X,A)$ is non branching. Furthermore, if $(\mu_t)_{t\in [0,1]} \in \Geo(\calM_p(X,A))$, then for any $t\in (0,1)$ and any $\gamma\in\Opt(\mu_0,\mu_{t})$, $\gamma^\Omega_X$ is unique and it is induced by a map.
\end{prop}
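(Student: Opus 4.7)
The plan is to mimic the classical proof of Wasserstein non-branching for non-branching geodesic spaces (cf.\ \cite[Proposition 2.16]{AG13}), using the geodesic lift from proposition \ref{prop:geodesics}, the gluing lemma \ref{lem:gluing}, strict convexity of $r\mapsto r^p$ for $p>1$, and the characterisation of optimal partial transport plans in proposition \ref{prop:criteria for optimality}.

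First I would lift $(\mu_s)_{s\in[0,1]}$ via proposition \ref{prop:geodesics} to a measure $\ggamma\in\calM(\Geo(X))$ satisfying $\mu_s=(e_s)_\#\ggamma|_\Omega$ and $(e_r,e_s)_\#\ggamma|_{E_\Omega}\in\Opt(\mu_r,\mu_s)$ for all $0\le r<s\le 1$. Given $t\in(0,1)$ and $\gamma\in\Opt(\mu_0,\mu_t)$, the gluing lemma \ref{lem:gluing} produces a three-marginal measure $\gamma^{123}$ on $X^3$ whose $\pi^{12}$- and $\pi^{23}$-projections agree with $\gamma$ and $(e_t,e_1)_\#\ggamma|_{E_\Omega}$, respectively, up to corrections supported on $\Delta(A\times A)$. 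Minkowski's inequality in $L^p(\gamma^{123})$ combined with the geodesic property of $(\mu_s)$ then yields
\[
\Wb_p(\mu_0,\mu_1)\le\|d\circ\pi^{13}\|_{L^p(\gamma^{123})}\le\|d\circ\pi^{12}\|_{L^p(\gamma^{123})}+\|d\circ\pi^{23}\|_{L^p(\gamma^{123})}=\Wb_p(\mu_0,\mu_1),
\]
so every inequality is an equality. For $p>1$, strict convexity of $r\mapsto r^p$ together with pointwise equality in the triangle inequality forces $\gamma^{123}$ to be concentrated on collinear triples $(x_0,x_t,x_1)$ sitting on a single minimising geodesic of $X$ at parameters $0$, $t$, $1$.

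Next I would invoke the non-branching of $X$. The pair $(x_t,x_1)\in\supp((e_t,e_1)_\#\ggamma|_{E_\Omega})$ does not depend on $\gamma$, and determines a unique minimising geodesic which, extended backward to parameter $0$, fixes $x_0$. Hence $\gamma^{123}$ is concentrated on the graph of a Borel map $(x_t,x_1)\mapsto x_0$, so both $\gamma^\Omega_X$ and the Borel map $T\colon \Omega\to X$ realising $\gamma^\Omega_X=(\id,T)_\#\mu_0$ are determined by $\ggamma$, and in particular do not depend on the choice of $\gamma$. For the piece with $x_1\in A$, the last part of proposition \ref{prop:criteria for optimality} forces $d(x_0,x_1)=d(x_0,A)$, and non-branching identifies $x_1$ with the closest point of $A$ along the chosen minimising geodesic; this part is therefore also induced by a map. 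This proves the second assertion. For the first, suppose $(\mu^i_s)_{s\in[0,1]}$, $i=1,2$, are two geodesics in $\calM_p(X,A)$ with $\mu^1_0=\mu^2_0$ and $\mu^1_t=\mu^2_t$; the second assertion applied to each of them gives the same map $T$, and non-branching extends the geodesic segments on $[0,t]$ uniquely to $[0,1]$, yielding $\mu^1_s=\mu^2_s$ for all $s$.

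The main obstacle I anticipate is dealing cleanly with boundary mass: triples in $\supp(\gamma^{123})$ with intermediate $x_t\in A$ are already ruled out by remark \ref{rem:intermediate points between points in S} when both endpoints lie in $\Omega$, but one still has to separately treat the parts of $\gamma$ where mass is absorbed by $A$ (i.e.\ $x_0\in\Omega$, $x_1\in A$) or emitted from $A$ ($x_0\in A$, $x_1\in\Omega$), and to verify that the non-branching extension of the geodesics terminating in $A$ still produces a single-valued map on $\Omega$.
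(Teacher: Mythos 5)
Your outline is right up to the Minkowski-equality step: lifting via proposition \ref{prop:geodesics}, gluing via lemma \ref{lem:gluing}, the chain of inequalities, and the conclusion that $\gamma^{123}$ is concentrated on triples $(x_0,x_t,x_1)$ lying on a geodesic with $d(x_0,x_t)=t\,d(x_0,x_1)$ and $d(x_t,x_1)=(1-t)\,d(x_0,x_1)$ — all of this matches the paper. The gap appears in the step that is supposed to deliver the ``induced by a map'' conclusion. Your argument produces a well-defined Borel map $(x_t,x_1)\mapsto x_0$: given the values at parameters $t$ and $1$ of a geodesic on $[0,1]$, non-branching of $X$ (applied after a reparametrisation) determines the value at $0$. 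But this only shows that $x_0$ is a function of the \emph{pair} $(x_t,x_1)$. To conclude that the relevant restriction of $\gamma$ is induced by a map and that it is unique, one needs the much stronger statement that for $\mu_t$-a.e.\ $x_t$ the \emph{entire pair} $(x_0,x_1)$ is determined, i.e.\ that $\pi^2$ is injective on $\supp(\gamma^{123})$. Non-branching of $X$ alone does not give this: two geodesics can pass through the same point at time $t$ while having different endpoints, so $(e_t,e_1)_\#\ggamma|_{E_\Omega}$ need not be concentrated on a graph, and your map does not factor through $x_t$. You then assert ``so both $\gamma^\Omega_X$ and the Borel map $T\colon\Omega\to X$ realising $\gamma^\Omega_X=(\id,T)_\#\mu_0$ are determined by $\ggamma$'', which presupposes exactly what was to be proved.

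The paper closes this gap with the ingredient you drop: it considers two triples $(x,y,z),(x',y,z')\in\supp(\gamma^{123})$ with the \emph{same} middle point $y$, notes that $(x,z),(x',z')\in\supp(\pi^{13}_\#\gamma^{123})$ which is $c$-cyclically monotone (via proposition \ref{prop:criteria for optimality}), and then combines cyclical monotonicity with the collinearity relations and the strict convexity of $r\mapsto r^p$ for $p>1$ to force $d(x,z)=d(x',z')$ and $d(x,y)+d(y,z')=d(x,z')$; only \emph{at that point} does non-branching of $X$ enter, to conclude $z=z'$ and $x=x'$. This is the same combination that appears in \cite[Proposition 2.16]{AG13}, which you cited as your model but did not actually reproduce. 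To repair your proof you should replace the ``extend backward'' reasoning with the cyclical-monotonicity comparison of two triples sharing the same $x_t$. The non-branching conclusion at the end of your proposal is then the right idea but needs to be made precise (the paper does it through the lift $\ggamma$ and the injectivity of $(e_0,e_{t_0})$ on $\Geo(X)$, using remark \ref{rem:intermediate points between points in S} to handle the fact that optimal plans in $\Opt(\mu_0,\mu_{t_0})$ may charge $X\times A$ but $(e_{t_0})_\#\ggamma$ is concentrated on $\Omega$).
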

\begin{proof}
Let $(\mu_t)_{t\in [0,1]}\in\Geo(\calM_p(X,A))$, $t\in (0,1)$ and consider $\gamma^1\in \Opt(\mu_0,\mu_{t})$ and $\gamma^2\in \Opt(\mu_{t},\mu_1)$. By the proof of proposition \ref{prop:geodesics}, there is measure $\gamma$ on $X^3$ such that
\[
\pi^{12}_\#\gamma = \gamma^1+\sigma^{1}
\quad\text{and}\quad 
\pi^{23}_\#\gamma = \gamma^2+\sigma^{2}
\]
for some $\sigma^1,\sigma^2$ supported on $\Delta(A\times A)$, and such that 
$\pi^{13}_\#\gamma\in \Opt(\mu_0,\mu_1)$. Moreover, 
\[
d(x,y) = td(x,z),\ d(y,z) = (1-t)d(x,z),
\]
which implies that $x,\ y,\ z$ lie in a geodesic, for $\gamma$-a.e. $(x,y,z)$.

Now consider $(x,y,z),(x',y,z')\in \supp(\gamma)$. By proposition \ref{prop:criteria for optimality}, we know that $\supp(\pi^{13}_\#\gamma)$ is in the superdifferential of a $c$-concave function  (where $c$ is given by \eqref{eq:cost function c}), which implies it is $c$-cyclically monotone. Therefore, 
\begin{align*}
    d(x,z)^p+d(x',z')^p&\leq d(x,z')^p+d(x',z)^p\\
    &\leq (d(x,y)+d(y,z'))^p+(d(x',y)+d(y,z))^p\\
    &= (td(x,z) + (1-t)d(x',z'))^p+(td(x',z')+(1-t)d(x,z))^p\\
    &\leq td(x,z)^p+(1-t)d(x',z')^p+td(x',z')^p+(1-t)d(x,z)^p\\
    &= d(x,z)^p+d(x',z')^p
\end{align*}
where the last inequality is due to the convexity of $t\mapsto t^p$ for $p>1$. Moreover, the strict convexity of the same function, and the fact the all the inequalities above are equations, imply that $d(x,z) = d(x',z')$ and 
\[
d(x,y)+d(y,z') = d(x,z').
\]
In particular, $x,\ y,\ z'$ lie in a geodesic. Since $X$ is non-branching, we get that $z=z'$, and analogously we get that $x=x'$. In other words, the map $\pi^2\colon (x,y,z)\mapsto y$ is injective in $\supp(\gamma)$. In particular, if $T$ is the inverse of $\pi^2|_{\supp(\gamma)}$, we get that 
\[
(\pi^1\circ T,\id)_\#\mu_{t} = (\gamma^1)_{X}^{\Omega}\quad \text{and}\quad 
(\id,\pi^3\circ T)_\#\mu_{t} = (\gamma^2)_{\Omega}^{X}.
\]
Therefore, $(\gamma^1)_{X}^{\Omega}$ and $(\gamma^2)_{\Omega}^{X}$ are induced by maps. Moreover, this also implies that $(\gamma^1)_{X}^{\Omega}$ is unique, because otherwise we could construct $\gamma \in \Opt(\mu_0,\mu_t)$ such that
\[
\gamma_X^\Omega = \frac{1}{2}\left((\pi^1\circ T,\id)_\#\mu_t+(\pi^1\circ T',\id)_\#\mu_t\right),
\]
which would not be induced by a map.

Finally, to prove that $\calM_p(X,A)$ is non-branching, consider geodesics $(\mu_t)_{t\in [0,1]}$, $(\mu'_t)_{t\in[0,1]}$ and $t_0\in (0,1)$ such that $\mu_0= \mu_0'$ and $\mu_{t_0}=\mu_{t_0}'$. Let $\ggamma,\ggamma'\in \calM((e_0,e_1)^{-1}(E_\Omega))$ be such that $\mu_t = (e_t)_\#\ggamma|_\Omega$ and $\mu'_t = (e_t)_\#\ggamma'|_\Omega$ for all $t\in [0,1]$. Then we have
\[
(e_0,e_{t_0})_\#\ggamma,\ (e_0,e_{t_0})_\#\ggamma'\in \Opt(\mu_0,\mu_{t_0}),
\]
and due to our previous arguments, it follows that
\[
(e_0,e_{t_0})_\#\ggamma = (e_0,e_{t_0})_\#\ggamma|_{X\times \Omega} =\ (e_0,e_{t_0})_\#\ggamma'|_{X\times \Omega} = (e_0,e_{t_0})_\#\ggamma',
\]
where we have used that $\supp((e_{t_0})_\#\ggamma)\subset \Omega$ for $t_0\in (0,1)$, thanks to remark \ref{rem:intermediate points between points in S}. Since $X$ is non-branching, the map $(e_0,e_{t_0})\colon \Geo(X)\to X\times X$ is injective, which implies
\[
\ggamma=\ggamma',
\]
therefore 
\[
\mu_1 = (e_1)_\#\ggamma|_\Omega = (e_1)_\#\ggamma'|_\Omega = \mu_1',  
\] and the proposition follows. 
\end{proof}

\subsection{Non-negative curvature}\label{sec:non-negative curvature-OPT}

In this subsection we prove that $\calM_2(X,A)$ inherits the property of having non-negative curvature in the sense of Alexandrov. This provides a new way to construct Alexandrov spaces. The proof is an adaptation of that of \cite[Theorem 2.20]{AG13}.

\begin{thm}\label{t:alexandrov-OPT}
    Let $(X,A)$ be a metric pair such that $X$ is a non-negatively curved Alexandrov space. Then $\calM_2(X,A)$ is a non-negatively curved Alexandrov space too.
\end{thm}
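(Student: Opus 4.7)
Completeness and the existence of geodesics are already established by propositions \ref{prop:competeness and separability} and \ref{prop:geodesics}, so my aim is to verify condition $(\T_0)$ of definition \ref{def:alex} for $\calM_2(X,A)$. By remark \ref{rem:non-negative curvature} this reduces to showing that, for every $\sigma \in \calM_2(X,A)$ and every geodesic $(\mu_t)_{t\in[0,1]}$ in $\calM_2(X,A)$,
\[
\Wb_2^2(\sigma, \mu_t) \geq (1-t) \Wb_2^2(\sigma, \mu_0) + t \Wb_2^2(\sigma, \mu_1) - t(1-t) \Wb_2^2(\mu_0, \mu_1)
\]
holds for every $t \in [0,1]$.

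The plan, following the strategy used in \cite[Theorem 2.20]{AG13} for the classical Wasserstein space, is to build a single measure $\boldsymbol{\Pi}$ on $X \times \Geo(X)$ that simultaneously encodes an optimal partial transport plan between $\sigma$ and $\mu_t$ and the geodesic $(\mu_t)$ itself. Concretely, by proposition \ref{prop:geodesics} and the reasoning in its proof, I would fix a measure $\ggamma$ on $\Geo(X)$ with $(e_0,e_1)_\# \ggamma|_{E_\Omega} \in \Opt(\mu_0, \mu_1)$, $\mu_s = (e_s)_\# \ggamma|_\Omega$, and $\ggamma$ concentrated on $(e_0, e_1)^{-1}(\calS \cap E_\Omega)$; by remark \ref{rem:intermediate points between points in S}, this guarantees that $(e_t)_\# \ggamma$ is entirely on $\Omega$ and equals $\mu_t$ for every $t \in (0,1)$. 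Given $\alpha \in \Opt(\sigma, \mu_t)$, I would then glue $\alpha$ and $\ggamma$ along their common $\mu_t$-marginal by applying theorem \ref{thm:classical gluing} with $\pi^2 \colon X \times X \to X$ and $e_t \colon \Geo(X) \to X$, and project the result onto $X \times \Geo(X)$ to obtain $\boldsymbol{\Pi}$, which will satisfy $\pi^2_\# \boldsymbol{\Pi} = \ggamma$ and agree with $\alpha$ under $(\pi^1, e_t)$ on the bulk of the plan.

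The central computation is then the pointwise non-negative curvature inequality in $X$,
\[
d(x, \xi_t)^2 \geq (1-t) d(x, \xi_0)^2 + t d(x, \xi_1)^2 - t(1-t) d(\xi_0, \xi_1)^2,
\]
which holds for every $x \in X$ and every $\xi \in \Geo(X)$. Integrating against $\boldsymbol{\Pi}$ and identifying $\int d(x, \xi_t)^2 \, d\boldsymbol{\Pi}$ with $\Wb_2^2(\sigma, \mu_t)$, $\int d(\xi_0, \xi_1)^2 \, d\boldsymbol{\Pi}$ with $\Wb_2^2(\mu_0, \mu_1)$ (using optimality of $(e_0,e_1)_\# \ggamma|_{E_\Omega}$), and noting that $(\pi^1, e_s)_\# \boldsymbol{\Pi}$ lies in $\Adm(\sigma, \mu_s)$ for $s \in \{0,1\}$ so that $\int d(x, \xi_s)^2 \, d\boldsymbol{\Pi} \geq \Wb_2^2(\sigma, \mu_s)$, will recover the displayed Alexandrov inequality.

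The main technical obstacle I anticipate is the careful bookkeeping of the boundary portions of $\alpha$, namely the pieces on $\Omega \times A$ and $A \times \Omega$, which encode destruction and creation of mass at $A$. These components cannot be glued directly against $\ggamma$ via the $\mu_t$-marginal and so must be built into $\boldsymbol{\Pi}$ by appending pairs $(x, \xi)$ in which $\xi$ is a suitable \emph{boundary geodesic} --- for example, the constant geodesic at $\proj_A(x)$ in the destruction case (using lemma \ref{lem:projection onto A}), with an analogous construction in the creation case. The last clause of proposition \ref{prop:criteria for optimality}, which asserts that $d(x,y) = d(x,A)$ on $\alpha|_{\Omega \times A}$ and $d(x,y) = d(y,A)$ on $\alpha|_{A \times \Omega}$, is what makes this replacement preserve total cost, and thereby allows the boundary mass to participate correctly in the three integral identities above without violating admissibility of the push-forwards at $s = 0$ and $s = 1$.
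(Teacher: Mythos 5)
Your overall strategy — glue an optimal partial plan to the geodesic measure $\ggamma$ along the common $\mu_t$-marginal, integrate the pointwise non-negative-curvature inequality in $X$, and handle a residual boundary piece separately — is exactly the paper's strategy (adapted from \cite[Theorem 2.20]{AG13}), up to the harmless relabelling $\alpha\in\Opt(\sigma,\mu_t)$ versus $\gamma\in\Opt(\mu_t,\nu)$. However, there is a genuine error in your bookkeeping of the boundary pieces. You assert that both $\alpha|_{\Omega\times A}$ and $\alpha|_{A\times\Omega}$ ``cannot be glued directly against $\ggamma$ via the $\mu_t$-marginal'' and must be replaced by boundary geodesics. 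Only $\alpha|_{\Omega\times A}$ is outside the gluing. The piece $\alpha|_{A\times\Omega}$ has its second marginal contained in $\mu_t = \pi^2_\#\alpha|_\Omega = (e_t)_\#\ggamma$, so it is automatically --- and necessarily --- part of the gluing: one glues $\alpha|_{X\times\Omega} = \alpha|_{\Omega\times\Omega}+\alpha|_{A\times\Omega}$ (the paper's $\gamma_\Omega^X$, after mirroring) against $\ggamma$.

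This is not a cosmetic point. The mass that $\alpha|_{A\times\Omega}$ injects into $\mu_t$ from $A$ must be transported along $\ggamma$ to land on the correct locations in $\mu_0$ and $\mu_1$; if you instead pair that mass with fixed boundary geodesics, the pushforward $(\pi^X, e_s\circ\pi^{\Geo(X)})_\#\boldsymbol{\Pi}$ for $s\in\{0,1\}$ would have second marginal on $\Omega$ equal to only a sub-measure of $\mu_s$ together with a stray fragment of $\mu_t$, so it would fail to lie in $\Adm(\sigma,\mu_s)$ and the estimate $\int d(z,\xi_s)^2\,d\boldsymbol{\Pi}\geq \Wb_2^2(\sigma,\mu_s)$ would collapse. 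For the other piece $\alpha|_{\Omega\times A}$, your idea of appending the constant geodesic at $\proj_A(x)$ does work (the pointwise inequality degenerates to an equality there, and the final clause of proposition \ref{prop:criteria for optimality} ensures the cost is preserved), but it is equivalent to, and a bit heavier than, what the paper does: simply add $\alpha|_{\Omega\times A}$ unchanged to all three competitor transport plans.
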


\begin{proof}
Since $X$ is complete, separable, proper and geodesic, it follows that $\calM_2(X,A)$ is complete and geodesic, due to theorems \ref{prop:competeness and separability} and \ref{prop:geodesics}. Now, let $(\mu_t)_{t\in [0,1]}$ be a constant speed geodesic in $\calM_2(X,A)$. Let also $\nu\in \calM_2(X,A)$ be some measure. By theorem \ref{prop:geodesics}, we know there exists a measure $\ggamma$ on $\Geo(X)$ such that $(e_0,e_1)_\#\ggamma \in \Opt(\mu_0,\mu_1)$ and $(e_t)_\#\ggamma|_\Omega = \mu_t$ for all $t\in [0,1]$. Fix $t\in (0,1)$ and consider $\gamma\in \Opt(\mu_t,\nu)$. By observing that $(e_t)_\#\ggamma =\mu_{t} = \pi^1_\#\gamma|_\Omega$ and applying the gluing lemma (theorem \ref{thm:classical gluing}), we get a measure $\aalpha\in \calM(\Geo(X)\times X)$ such that
\begin{align*}
\pi^{\Geo(X)}_\#\aalpha = \ggamma\\
(e_t\circ\pi^{\Geo(X)},\pi^X)_\#\aalpha = \gamma_\Omega^X
\end{align*}
In it therefore easy to check that 
\begin{align*}
(e_0\circ \pi^{\Geo(X)},\pi^X)_\#\aalpha|_{E_\Omega} + \gamma_A^\Omega\in \Adm(\mu_0,\nu),\\ 
(e_1\circ \pi^{\Geo(X)},\pi^X)_\#\aalpha|_{E_\Omega} + \gamma_A^\Omega \in \Adm(\mu_1,\nu)    
\end{align*}
In particular,
\begin{align*}
\Wb_2^2(\mu_t,\nu) &= \int_{E_\Omega} d(x,z)^2\ d\gamma(x,z)\\
&= \int_{\Omega\times X} d(x,z)^2\ d\gamma(x,z) + \int_{A\times \Omega} d(x,z)^2\ d\gamma(x,z)\\
&= \int_{\Geo(X)\times X}d(\xi_t,z)^2\ d\aalpha(\xi,z) + \int_{A\times \Omega} d(x,z)^2\ d\gamma(x,z)\\
&\geq \int_{\Geo(X)\times X} (1-t)d(\xi_0,z)^2 + td(\xi_1,z)^2 - (1-t)td(\xi_0,\xi_1)^2\ d\aalpha(\xi,z)\\
&\qquad + \int_{A\times \Omega} d(x,z)^2\ d\gamma(x,z)\\
&\geq (1-t)\left(\int_{E_\Omega} d(x,z)^2\ d(e_0\circ \pi^{\Geo(X)},\pi^X)_\#\aalpha(x,z) + \int_{A\times \Omega} d(x,z)^2\ d\gamma(x,z) \right)\\ 
&\qquad + t\left(\int_{E_\Omega} d(x,z)^2\ d(e_1\circ \pi^{\Geo(X)},\pi^X)_\#\aalpha(x,z) + \int_{A\times \Omega} d(x,z)^2\ d\gamma(x,z)\right)\\ 
&\qquad - (1-t)t\int_{E_\Omega} d(x,z)^2 \ d((e_0,e_1)\circ \pi^{\Geo(X)})_\#\aalpha(x,z)\\
&\geq (1-t)\Wb_2^2(\mu_0,\nu)+t\Wb_2^2(\mu_1,\nu)-(1-t)t\Wb_2^2(\mu_0,\mu_1),
\end{align*}
which proves the claim.
\end{proof}

An interesting fact about the geometric structure of $\calM_2(X,A)$ when $X$ is a non-negatively curved Alexandrov space is that the zero measure is always an {\em extremal point}, i.e.\ a point at which the space of directions has diameter bounded above by $\pi/2$ (see \cite{P07} for a more general exposition about extremal points and extremal sets in Alexandrov spaces).

\begin{prop}
    The space of directions at the zero measure, $\Sigma_0(\calM_2(X,A))$, has diameter no greater than $\pi/2$.
\end{prop}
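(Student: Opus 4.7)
The plan is to show that for any two geodesics $(\mu_t)_{t\in[0,1]}$ and $(\nu_t)_{t\in[0,1]}$ in $\calM_2(X,A)$ with $\mu_0=\nu_0=0$, the angle at $0$ satisfies $\angle(\mu,\nu)\leq \pi/2$; by density of geodesic directions in $\Sigma_0(\calM_2(X,A))$ and continuity of the angle metric on the completion, this yields the diameter bound.

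The key observation is that, since $X$ has non-negative curvature, the comparison angle $\widetilde\angle_0\mu_s0\nu_t$ is computed in the Euclidean plane via
\[
\cos\widetilde\angle_0 \mu_s 0 \nu_t = \frac{\Wb_2^2(\mu_s,0)+\Wb_2^2(\nu_t,0)-\Wb_2^2(\mu_s,\nu_t)}{2\,\Wb_2(\mu_s,0)\,\Wb_2(\nu_t,0)},
\]
so it suffices to establish the universal inequality
\begin{equation}\label{eq:aux-angle-bound}
\Wb_2^2(\mu,\nu)\leq \Wb_2^2(\mu,0)+\Wb_2^2(\nu,0)\qquad \text{for all }\mu,\nu\in\calM_2(X,A).
\end{equation}

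To prove \eqref{eq:aux-angle-bound}, I would use the admissible plan already featured in the proofs of theorem \ref{thm:wb-basic-properties} and the remark following lemma \ref{lem:projection onto A}, namely
\[
\gamma=(\id,\proj_A)_\#\mu+(\proj_A,\id)_\#\nu\in \Adm(\mu,\nu),
\]
whose cost, thanks to lemma \ref{lem:projection onto A}, equals
\[
C(\gamma)=\int_\Omega d(x,A)^2\,d\mu(x)+\int_\Omega d(y,A)^2\,d\nu(y)=\Wb_2^2(\mu,0)+\Wb_2^2(\nu,0),
\]
where the last equality uses equation \eqref{eq:distance-to-zero}. Taking the infimum over $\Adm(\mu,\nu)$ gives \eqref{eq:aux-angle-bound}. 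Substituting into the comparison-angle formula yields $\cos\widetilde\angle_0\mu_s0\nu_t\geq 0$, hence $\widetilde\angle_0\mu_s0\nu_t\leq \pi/2$ for every $s,t\in(0,1]$, and letting $s,t\to 0$ gives $\angle(\mu,\nu)\leq \pi/2$.

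The remaining step is to pass from geodesic directions in $\Sigma_0'$ to their completion $\Sigma_0(\calM_2(X,A))$: since the angle metric is continuous on $\Sigma_0'\times\Sigma_0'$ and $\Sigma_0'$ is dense in its completion, the diameter bound extends verbatim. I do not anticipate serious obstacles; the only mild subtlety is the intuitive content of \eqref{eq:aux-angle-bound}, which is precisely the statement that transporting $\mu$ and $\nu$ into $A$ independently is always a feasible (though typically non-optimal) strategy. This reflects the fact that the ``boundary'' $A$ acts as a cone point, making $0$ an extremal point of the Alexandrov space $\calM_2(X,A)$ provided by theorem \ref{t:alexandrov-OPT}.
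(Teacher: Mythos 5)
Your proposal is correct and takes essentially the same approach as the paper: both prove the key inequality $\Wb_2^2(\mu,\nu)\leq \Wb_2^2(\mu,0)+\Wb_2^2(\nu,0)$ by exhibiting the suboptimal plan $(\id,\proj_A)_\#\mu+(\proj_A,\id)_\#\nu$, then apply the comparison-angle formula to conclude $\cos\angle(\xi^1,\xi^2)\geq 0$ at the zero measure. Your extra remark on passing from $\Sigma_0'$ to its completion is a sound elaboration of a step the paper leaves implicit.
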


\begin{proof}
Let $\mu,\nu\in\calM_2(X,A)$. Then, we know that
\[
\Wb_2(\mu,0)^2+\Wb_2(\nu,0)^2\geq \Wb_2(\mu,\nu)^2
\]
since the transport plan $(\id,\proj_A)_\#\mu+(\proj_A,\id)_\#\nu\in\Adm(\mu,\nu)$ is suboptimal. Therefore, if $\xi_1,\xi_2\in\Geo(\calM_2(X,A))$ are geodesics with $\xi_1(0)=\xi_2(0)=0$, then
\[
\cos\angle (\xi_1,\xi_2) = \lim_{s,t\to 0} \frac{\Wb_2(\xi_1(s),0)^2+\Wb_2(\xi_2(t),0)^2-\Wb_2(\xi_1(s),\xi_2(t))^2}{2\Wb_2(\xi_1(s),0)\Wb_2(\xi_2(t),0)}\geq 0
\]
which implies that $\angle (\xi_1,\xi_2)\leq \pi/2$.
\end{proof}

\section{Embedding of \texorpdfstring{$\calD_p(X,A)$}{spaces of persistence diagrams} into \texorpdfstring{$\calM_p(X,A)$}{spaces of positive measures}}\label{sec:embedding-OPT}

One of the motivations for studying $\calM_p(X,A)$ is that it admits a natural embedding of the space of generalised persistence diagrams $\calD_p(X,A)$, as defined in \cite{CGGGMS2022}. Indeed, we have the natural inclusion
\begin{align*}
\calD_p(X,A) &\longrightarrow \calM_p(X,A)\\
\sigma &\longmapsto \sum_{x\in \sigma|_\Omega} \delta_x.
\end{align*}
The proofs of the following results are adaptations of those of \cite[Lemma 3.4 and Proposition 3.5]{DL21}.

\begin{prop}\label{prop:approximation.measures.diagrams.with.finite.case}
Let $\mu\in \calM_p(X,A)$,  $r > 0$ and $A_r = \{x \in X: d(x, A) \leq r \}$. Let $\mu^r=\mu|_{X\setminus A_r}$. Then $\Wb_p(\mu^r,\mu) \to 0$ when $r \to 0$. Similarly, if $\sigma \in \calD_p(X,A)$, we have $d_p(\sigma^r, \sigma) \to 0$ as $r\to 0$.
\end{prop}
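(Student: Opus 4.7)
The plan is to construct an explicit partial transport plan that separates the ``far'' part of $\mu$ from $A$ (which is left untouched) and sends the ``near'' part to its projection onto $A$.

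\textbf{Step 1: The candidate plan.} For $r>0$, set $B_r = A_r\setminus A$, so that $\mu-\mu^r=\mu|_{B_r}$ (using that $\mu$ is concentrated on $\Omega$). Define
\[
\gamma_r = (\id,\id)_\#\mu^r + (\proj_A,\id)_\#\mu|_{B_r},
\]
where $\proj_A\colon X\to A$ is the Borel selection provided by lemma \ref{lem:projection onto A}. Note $\gamma_r$ is supported on the diagonal of $\Omega\times\Omega$ together with $A\times B_r\subset E_\Omega$, so indeed $\gamma_r(A\times A)=0$.

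\textbf{Step 2: Check admissibility.} The first marginal restricted to $\Omega$ is $\mu^r$, because the second summand pushes mass forward by $\proj_A$ in the first coordinate and is therefore concentrated on $A$, contributing nothing to $\Omega$. The second marginal restricted to $\Omega$ is $\mu^r+\mu|_{B_r}=\mu$. Hence $\gamma_r\in\Adm(\mu^r,\mu)$.

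\textbf{Step 3: Estimate the cost.} The diagonal part contributes zero, so
\[
\Wb_p^p(\mu^r,\mu)\leq C(\gamma_r) = \int_{B_r} d(\proj_A(y),y)^p\,d\mu(y) = \int_{B_r} d(y,A)^p\,d\mu(y).
\]
Since $\mu\in\calM_p(X,A)$, the function $y\mapsto d(y,A)^p$ lies in $L^1(\mu)$. For every $y\in\Omega$ we have $d(y,A)>0$, so $\mathbf{1}_{B_r}(y)\to 0$ pointwise on $\Omega$ as $r\to 0$. The integrands are dominated by $d(\cdot,A)^p\in L^1(\mu)$, so the dominated convergence theorem gives $C(\gamma_r)\to 0$, hence $\Wb_p(\mu^r,\mu)\to 0$.

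\textbf{Step 4: The diagram case.} A generalised persistence diagram $\sigma\in\calD_p(X,A)$ is (by the definition in \cite{CGGGMS2022}) a countable multiset in $\Omega$ with finite $p$-persistence, and $d_p$ is the infimum of $\bigl(\sum d(x,\phi(x))^p\bigr)^{1/p}$ over partial matchings (unmatched points paying $d(\cdot,A)^p$). Use the partial matching that is the identity on $\sigma\cap(X\setminus A_r)$ and leaves every point of $\sigma\cap B_r$ unmatched; its cost is exactly $\sum_{x\in\sigma\cap B_r}d(x,A)^p$, which tends to zero as $r\to 0$ by the same dominated convergence argument applied to the counting measure associated with $\sigma$, using that $\sum_{x\in\sigma}d(x,A)^p<\infty$ by hypothesis.

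No serious obstacle is anticipated: the argument is a straightforward application of dominated convergence once the right admissible plan (respectively matching) is written down; the only care needed is the verification that moving the near-diagonal mass to $A$ via $\proj_A$ produces a measure on $E_\Omega$ with the correct marginals on $\Omega$, which is immediate from the construction.
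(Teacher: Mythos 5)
Your proposal is correct and follows essentially the same approach as the paper: you construct the explicit admissible plan that is the identity on the far part of $\mu$ and projects the near part to $A$, obtain the bound $\Wb_p^p(\mu^r,\mu)\le\int_{B_r}d(\cdot,A)^p\,d\mu$, and pass to the limit. The only cosmetic differences are that the paper writes the plan in $\Adm(\mu,\mu^r)$ (yours is the transpose, in $\Adm(\mu^r,\mu)$, which is equivalent since $\Wb_p$ is symmetric), integrates over $A_r$ rather than $B_r$ (the same thing since $\mu(A)=0$), and invokes monotone rather than dominated convergence for the final limit; both convergence arguments work.
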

\begin{proof}
Let $\gamma \in \Adm(\mu,\mu^r)$ be the transport plan given by
\[
\gamma = (\id,\id)_\# \mu|_{X\setminus A_r} + (\id,\proj_A)_\#\mu|_{A_r}.
\] 
Therefore,
\[
\Wb_p^p(\mu,\mu^r) \leq \int_{A_r} d(x,A)^p\ d\mu(x).
\]
By the monotone convergence theorem applied to $\mu$ with the functions $f_r(x)=d(x,A)^p\cdot 1_{X\setminus A_r}(x)$, we conclude that $\Wb_p(\mu,\mu^r) \to 0$ as $r \to 0$. Similar arguments show that $d_p(\sigma,\sigma^r) \to 0$ as $r \to 0$.
\end{proof}

\begin{thm}
For $\sigma,\tau \in \calD_p(X,A)$, $\Wb_p(\sigma, \tau) = d_p(\sigma, \tau)$.
\end{thm}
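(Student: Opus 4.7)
The plan is to establish both inequalities $\Wb_p(\sigma,\tau) \leq d_p(\sigma,\tau)$ and $d_p(\sigma,\tau) \leq \Wb_p(\sigma,\tau)$, reducing first to the case of \emph{finite} persistence diagrams. By Proposition \ref{prop:approximation.measures.diagrams.with.finite.case}, the truncations $\sigma^r, \tau^r$ approach $\sigma, \tau$ simultaneously in $d_p$ and (under the inclusion into $\calM_p(X,A)$) in $\Wb_p$. Since for $\sigma \in \calD_p(X,A)$ the truncated diagram $\sigma^r$ lives outside $A_r$ and hence has only finitely many points, it suffices to prove the equality when $\sigma = \sum_{i=1}^m \delta_{x_i}$ and $\tau = \sum_{j=1}^n \delta_{y_j}$ are finite (counted with multiplicity), and then pass to the limit.

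For the easy direction $\Wb_p \leq d_p$ in the finite case, I would take any partial matching realising the infimum defining $d_p(\sigma,\tau)$, that is a bijection between subsets $I \subseteq \{1,\dots,m\}$ and $J \subseteq \{1,\dots,n\}$, and build the explicit admissible plan
\[
\gamma = \sum_{(i,j)\in M} \delta_{(x_i,y_j)} + \sum_{i\notin I} \delta_{(x_i,\proj_A(x_i))} + \sum_{j\notin J} \delta_{(\proj_A(y_j),y_j)} \in \Adm(\sigma,\tau),
\]
whose cost $C(\gamma)$ equals the cost of the matching. Taking the infimum over matchings yields $\Wb_p(\sigma,\tau) \leq d_p(\sigma,\tau)$.

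For the reverse inequality $d_p \leq \Wb_p$, which is the main obstacle, I would use Theorem \ref{thm:existence-of-optimal-partial-transport} to pick an optimal plan $\gamma \in \Opt(\sigma,\tau)$ and then use Proposition \ref{prop:criteria for optimality} to concentrate it on a computable support: by the last clause of that proposition, $d(x,y) = d(x,A)$ on $\gamma_\Omega^A$-a.e.\ $(x,y)$ (and symmetrically for $\gamma_A^\Omega$), so without increasing $C(\gamma)$ we can push the $A$-marginal of $\gamma_\Omega^A$ onto $\proj_A(x_i)$ and similarly for $\gamma_A^\Omega$. Consequently $\gamma$ can be written as
\[
\gamma = \sum_{i,j} a_{ij}\, \delta_{(x_i,y_j)} + \sum_i \alpha_i\, \delta_{(x_i,\proj_A(x_i))} + \sum_j \beta_j\, \delta_{(\proj_A(y_j),y_j)},
\]
with $a_{ij},\alpha_i,\beta_j \geq 0$ and marginal constraints $\sum_j a_{ij}+\alpha_i = 1$, $\sum_i a_{ij}+\beta_j = 1$. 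The cost is then the linear functional $\sum_{i,j} a_{ij}d(x_i,y_j)^p + \sum_i \alpha_i d(x_i,A)^p + \sum_j \beta_j d(y_j,A)^p$ on a bounded polytope whose constraint matrix (bipartite incidence plus diagonal slack blocks) is totally unimodular. Its extreme points therefore have $0$--$1$ coordinates and correspond exactly to partial matchings between $\{x_i\}$ and $\{y_j\}$. Since a linear functional on a polytope attains its minimum at an extreme point, $\Wb_p^p(\sigma,\tau)$ coincides with the cost of some partial matching, which is $\geq d_p^p(\sigma,\tau)$.

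The hard part is the extreme-point argument: a priori the optimal $\gamma$ is a Radon measure on the infinite-dimensional space $E_\Omega$, so one must first invoke Proposition \ref{prop:criteria for optimality} (and the freedom to relocate mass along the diagonal of $A \times A$) to collapse it onto the finite set $\{(x_i,y_j)\} \cup \{(x_i,\proj_A x_i)\} \cup \{(\proj_A y_j, y_j)\}$ before the totally unimodular LP argument applies. Once in this finite-dimensional setting, the inequality is immediate, and combined with Step 1 the theorem follows.
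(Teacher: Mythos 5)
Your proof is correct and follows the same essential strategy as the paper's: reduce to finite diagrams via Proposition~\ref{prop:approximation.measures.diagrams.with.finite.case}, reformulate $\Wb_p$ as a finite linear program, and appeal to the integrality of the feasible polytope's extreme points. The difference is in the LP formulation. The paper augments both diagrams to common size $m+n$ by appending projections of the other diagram's points, identifies $d_p$ with a minimum of $\langle P,C\rangle_{\mathrm{HS}}$ over permutation matrices, identifies $\Wb_p^p$ with the same cost minimised over doubly stochastic matrices, and invokes Birkhoff--von Neumann. You instead stay with the unaugmented diagrams and the ``partial-matching polytope'' with slack variables $\alpha_i,\beta_j$ recording mass sent to $A$, and you invoke total unimodularity of the constraint matrix (a bipartite incidence block extended by identity columns). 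These arguments are essentially equivalent, since Birkhoff's theorem is precisely the total-unimodularity statement for the bipartite incidence matrix. Your version is slightly more careful on one point: you explicitly justify that an optimal $\gamma\in\Opt(\sigma,\tau)$ may be collapsed onto the finite set $\{(x_i,y_j)\}\cup\{(x_i,\proj_A(x_i))\}\cup\{(\proj_A(y_j),y_j)\}$ before the finite LP argument applies, a step the paper passes over with ``similarly, it is clear that.'' One minor remark: this collapsing does not actually require the optimality characterisation of Proposition~\ref{prop:criteria for optimality} --- for any admissible $\gamma$, replacing $\gamma_\Omega^A$ by $(\pi^1,\proj_A\circ\pi^1)_\#\gamma_\Omega^A$ (and symmetrically for $\gamma_A^\Omega$) preserves admissibility and never increases $C(\gamma)$, so the reduction to the finite polytope is available directly from the definition of $\Wb_p$.
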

\begin{proof}
First we consider the case when both $\sigma\setminus A$ and $\tau\setminus A$ have finite cardinality (counting multiplicity), that is, $\sigma\setminus A =\mset{x_1,\dots,x_m}$ and $\tau\setminus A = \mset{y_1,\dots,y_n}$ for some $x_i,y_j\in \Omega$, $i=1,\dots,m$, $j=1,\dots,n$. Let us define 
\begin{align*}
\widetilde\sigma &= \mset{x_1,\dots,x_m,\proj_A(y_1),\dots,\proj_A(y_n)},\\
\widetilde\tau &= \mset{y_1,\dots,y_n,\proj_A(x_1),\dots,\proj_A(x_m)}.
\end{align*}
Then, it is clear that
\[
d^p_p(\sigma,\tau) = d^p_p(\widetilde\sigma,\widetilde\tau)= \min_{P} \langle P,C\rangle_{\mathrm{HS}},
\]
where $P$ runs over all permutation matrices of size $(m+n)\times (m+n)$, $\langle\cdot,\cdot\rangle_{\mathrm{HS}}$ denotes the Hilbert--Schmidt inner product of square matrices, and 
\[
C_{ij} = 
\begin{cases}
d(x_i,y_j)^p & \text{if}\ 1\leq i\leq m,\ 1\leq j\leq n \\
d(x_i,p_A(x_{j-n})^p & \text{if}\ 1\leq i\leq m,\ n<j\leq m+n\\
d(y_j,p_A(y_{i-m}))^p & \text{if}\ m<i\leq m+n,\ 1\leq j \leq n\\
0 & \text{if}\ m<i\leq m+n,\ n<j\leq m+n
\end{cases}.
\]
Similarly, it is clear that
\begin{equation}\label{eq:embedding.Dp.Wbp.finite.case}
\Wb_p^p(\sigma,\tau) = \min_{M} \langle M,C\rangle_{\mathrm{HS}}
\end{equation}
where $M$ runs over all matrices of size $(m+n)\times (m+n)$ such that $M_{ij} \geq 0$ and $\sum_{i=1}^{m+n} M_{ij}=\sum_{j=1}^{m+n} M_{ij} = 1$. 

However, it is known that minimisers in equation \eqref{eq:embedding.Dp.Wbp.finite.case} are permutation matrices (see \cite{B46,S15}). This proves the finite case.

For arbitrary $\sigma,\tau\in\calD_p(X,A)$, consider $r>0$ and observe that both $\sigma^r$ and $\tau^r$ contain finitely many points in $\Omega$. Then, due to proposition \ref{prop:approximation.measures.diagrams.with.finite.case}, we get that
\[
\Wb_p(\sigma,\tau) = \lim_{r\to 0} \Wb_p(\sigma^r,\tau^r) = \lim_{r\to 0} d_p(\sigma^r,\tau^r) = d_p(\sigma,\tau).\qedhere
\]
\end{proof}

\bibliographystyle{amsplain}
\bibliography{references}

\end{document}